\pdfoutput=1
\documentclass[12pt,a4paper]{amsart}
\usepackage{tgtermes}
\setlength{\textwidth}{\paperwidth}
\addtolength{\textwidth}{-1.8in}
\setlength{\textheight}{\paperheight}
\addtolength{\textheight}{-2.2in}
\calclayout
\usepackage{amscd,amsmath,amsxtra,amsthm,amssymb,stmaryrd,xr,mathrsfs,mathtools,enumerate,commath,comment,tikz-cd,float,xcolor,stmaryrd}
\usepackage{longtable} % for 'longtable' environment
\usepackage{pdflscape} % for 'landscape' environment
\usepackage{booktabs}
\usepackage[backref=true,giveninits=true,doi=false,url=false,isbn=false,backend=biber,style=alphabetic,maxbibnames=100,sorting=nyt]{biblatex}
    \newbibmacro{string+doiurlisbn}[1]{%
      \iffieldundef{doi}{%
        \iffieldundef{url}{%
          \iffieldundef{isbn}{%
            \iffieldundef{issn}{%
              #1%
            }{%
              \href{http://books.google.com/books?vid=ISSN\thefield{issn}}{#1}%
            }%
          }{%
            \href{http://books.google.com/books?vid=ISBN\thefield{isbn}}{#1}%
          }%
        }{%
          \href{\thefield{url}}{#1}%
        }%
      }{%
        \href{http://dx.doi.org/\thefield{doi}}{#1}%
      }%
    }
    \DeclareFieldFormat{title}{\usebibmacro{string+doiurlisbn}{\mkbibemph{#1}}}
    \DeclareFieldFormat[article,incollection,thesis,misc,inproceedings,online,inbook]{title}{\usebibmacro{string+doiurlisbn}{\mkbibquote{#1}}}
    \DeclareFieldFormat{year}{\usebibmacro{year-parenthesis}{\mkbibemph{#1}}}
\renewbibmacro*{date}{%
\iffieldundef{year}
{}
{\ifentrytype{misc}{\printtext[parens]{\printdate}}{\printdate}}}%
    \addbibresource{references.bib}
\usepackage{hyperref}
\setlength{\marginparwidth}{0.9in}
\usepackage[textsize=tiny,textwidth=1in]{todonotes}
\definecolor{vegasgold}{rgb}{0.77, 0.7, 0.35}
\definecolor{darkgoldenrod}{rgb}{0.72, 0.53, 0.04}
\definecolor{gold(metallic)}{rgb}{0.83, 0.69, 0.22}
\hypersetup{
 colorlinks=true,
 linkcolor=darkgoldenrod,
 filecolor=brown,      
 urlcolor=darkgoldenrod,
 citecolor=darkgoldenrod,
 pdftitle={Spanning trees in Z-covers of a finite graph and Mahler measures}
 }
\usepackage[capitalize,nameinlink,noabbrev]{cleveref}
%\usepackage[all,cmtip]{xy}

%For the pictures

\usepackage{tikz}

\usetikzlibrary{decorations.markings}

%\usetikzlibrary{graphs}

\usetikzlibrary{shapes.geometric}
\tikzset{every loop/.style={min distance=10mm,looseness=10}}

\tikzcdset{scale cd/.style={every label/.append style={scale=#1},
    cells={nodes={scale=#1}}}}
%\usepackage{tikzit}
%\pgfdeclarelayer{edgelayer}
%\pgfdeclarelayer{nodelayer}
%\pgfsetlayers{background,edgelayer,nodelayer,main}

\DeclareFontFamily{U}{wncy}{}
\DeclareFontShape{U}{wncy}{m}{n}{<->wncyr10}{}
\DeclareSymbolFont{mcy}{U}{wncy}{m}{n}
\DeclareMathSymbol{\Sh}{\mathord}{mcy}{"58}
\usepackage[T2A,T1]{fontenc}
\usepackage[OT2,T1]{fontenc}

\newtheorem{theorem}{Theorem}[section]

\newtheorem{proposition}[theorem]{Proposition}
\newtheorem{corollary}[theorem]{Corollary}

\newtheorem{assumption}[theorem]{Assumption}

\numberwithin{equation}{section}

 % theorems with letters (for intro)

\theoremstyle{remark}
\newtheorem{remark}[theorem]{Remark}
\newtheorem{example}[theorem]{Example}

\newcommand{\ord}{\mathrm{ord}}

\newcommand{\Z}{\mathbb{Z}}

\newcommand{\N}{\mathbb{N}}

\begin{document}
\title[Spanning trees in $\mathbb{Z}$-covers of a finite graph and Mahler measures]{Spanning trees in $\mathbb{Z}$-covers of a finite graph and Mahler measures}

\author[R.~Pengo]{Riccardo Pengo}
\address{
	Riccardo Pengo - Institut für Algebra, Zahlentheorie und Diskrete Mathematik, Fakultät für Mathematik und Physik, Leibniz Universität Hannover, Welfengarten 1, 30167 Hannover, Germany}
\email{\href{mailto:pengo@math.uni-hannover.de}{\textcolor{black}{pengo@math.uni-hannover.de}}}

\author[D.~Valli\`{e}res]{Daniel Valli\`{e}res}
\address{Daniel Valli\`{e}res - Mathematics and Statistics Department, California State University, Chico, CA 95929, USA}
\email{\href{mailto:dvallieres@csuchico.edu}{\textcolor{black}{dvallieres@csuchico.edu}}}

\begin{abstract}
Using the special value at $u=1$ of Artin-Ihara $L$-functions, we associate to every $\mathbb{Z}$-cover of a finite connected graph a polynomial which we call the \emph{Ihara polynomial}. 
We show that the number of spanning trees for the finite intermediate graphs of such a cover can be expressed in terms of the Pierce-Lehmer sequence associated to a factor of the Ihara polynomial.  This allows us to express the asymptotic growth of the number of spanning trees in terms of the Mahler measure of this polynomial.  Specializing to the situation where the base graph is a bouquet or the dumbbell graph gives us back previous results in the literature for circulant and $I$-graphs (including the generalized Petersen graphs).  We also express the $p$-adic valuation of the number of spanning trees of the finite intermediate graphs in terms of the $p$-adic Mahler measure of the Ihara polynomial.  When applied to a particular $\mathbb{Z}$-cover, our result gives us back Lengyel's calculation of the $p$-adic valuations of Fibonacci numbers.   
\end{abstract}

\subjclass[2020]{Primary: 05C25; Secondary: 11B83, 11R06} 
\date{\today} 
\keywords{Artin-Ihara $L$-funtions, abelian cover of finite graphs, number of spanning trees, Mahler measure.}

\maketitle
\tableofcontents 

%\section{Introduction}
\section{Introduction}
  
The aim of the present paper is to explain how the number of spanning trees in a $\mathbb{Z}$-cover of finite graphs evolves, by providing an explicit recipe to compute the invariants which describe this evolution in terms of a polynomial that can be associated to the cover in question.

\subsection{Historical remarks}
Before describing in detail the main results of this paper, let us provide some overview of the main questions which motivated the present paper. 

Iwasawa theory is concerned with the study of the evolution of certain invariants within a tower of objects (see \cite{Greenberg_2001} for a comprehensive survey).  The first example of this is provided by the evolution, as $n \to +\infty$, of the group of $\mathbb{F}_n$-rational points of the Jacobian of a curve defined over a finite field $\mathbb{F}$, where $\mathbb{F}_n \supseteq \mathbb{F}$ is the unique extension (up to isomorphism) of $\mathbb{F}$ having degree $n$. This example was studied by Weil and led him to formulate his celebrated conjectures concerning the properties of the zeta functions associated to varieties defined over a finite field.  Iwasawa pursued analogous investigations concerning the evolution of class groups of number fields in a tower of cyclotomic extensions, which are akin to the extensions of a function field that are obtained by increasing the field of constants, as explained in \cite[Page~188]{Rosen:2002}.  This initiated a large series of works, which study the evolution of different invariants along a tower of number fields whose Galois group is a $p$-adic Lie group (see \cite{Kakde_2011} for a survey).
Moreover, Iwasawa theory has been extended to the study of the evolution of invariants of many different arithmetic objects, such as elliptic curves or even general motives (see \cite{Fukaya_Kato_2006} for one of the most general frameworks available at present).

In a somehow different direction, ideas from Iwasawa theory have found applications also outside number theory and algebraic geometry. 
More precisely, it has been shown that the torsion subgroups of the first homology groups of a tower of hyperbolic $3$-manifolds whose base is the complement of a knot or a link evolve according to a pattern which is very similar to the one appearing in Iwasawa theory (see \cite{Hillman_Matei_Morishita_2006,Kadokami_Mizusawa_2008,Ueki_2017}).
Considering hyperbolic manifolds allows one to study towers whose groups of deck transformations are not necessarily profinite, which is not possible when one studies towers of number fields. 
For instance, one can consider a $\mathbb{Z}$-cover of hyperbolic manifolds.
In this case, when the base of the tower consists of the complement of a knot in the three dimensional sphere, the Alexander polynomial of the knot in question can be used to describe explicitly the growth of the torsion inside the first homology groups of the manifolds in question, as proven by Ueki \cite{Ueki_2020} in the $p$-adic case, and by González-Acuña and Short \cite{Gonzalez-Acuna_Short_1991} and Riley \cite{Riley_1990} in the Archimedean case.
These results are particularly interesting in view of the widely explored analogy between number fields and knots (see \cite{Morishita_2012} for a survey).

Finally, an analogue of Iwasawa theory has also been developed to study the evolution of the so-called Picard group of degree zero of a finite connected graph $X$, as one moves along a tower. 
This finite group, defined for instance in \cite[Section~1.3]{Corry_Perkinson_2018},
is analogous to the class group of a number field, or to the Picard group of degree zero of a curve defined over a finite field. 
Its cardinality, usually denoted by $\kappa(X)$, is given by the number of spanning trees of the graph in question.
The evolution of this number when the finite graph in question varies along a tower has been the subject of a series of papers written by several authors in collaboration with the second named author of the present paper \cite{Vallieres:2021,McGown/Vallieres:2022,McGown/Vallieres:2022a,Lei/Vallieres:2023,DuBose/Vallieres:2022}.
More precisely, if $\ell \in \mathbb{N}$ is a rational prime and
\begin{equation} \label{eq:tower}
    \dots \to X_{\ell^n} \to \dots \to X_{\ell} \to X_1 = X
\end{equation}
is a tower of finite graphs, such that each $X_{\ell^n}/X$ is a Galois cover with Galois group $\mathbb{Z}/\ell^n \mathbb{Z}$, 
it was shown in \cite{Vallieres:2021,McGown/Vallieres:2022, McGown/Vallieres:2022a} that there exist non-negative integers $\mu_{\ell},\lambda_{\ell}$ and an integer $\nu_{\ell}$ such that
\begin{equation} \label{eq:Iwasawa}
{\rm ord}_{\ell}(\kappa(X_{\ell^n})) = \mu_{\ell} \cdot \ell^{n} + \lambda_{\ell} \cdot n + \nu_{\ell}, 
\end{equation}
for $n$ large enough, where $\ord_\ell$ denotes the usual $\ell$-adic valuation on $\mathbb{Q}$.
Furthermore, it was shown in \cite{Lei/Vallieres:2023} that if $p$ is another rational prime different than $\ell$, then there exist a non-negative integer $\mu_{p}$ and an integer $\nu_{p}$ such that
\begin{equation} \label{eq:Washington}
{\rm ord}_{p}(\kappa(X_{\ell^n})) = \mu_{p} \cdot \ell^{n} + \nu_{p}, 
\end{equation}
for $n$ large enough.
On the other hand, given an integer $d \geq 1$, and a tower of finite graphs
\[
    \dots \to X_{\ell^n}^{(d)} \to \dots \to X_\ell^{(d)} \to X_1 = X,
\]
such that each $X_{\ell^n}^{(d)}/X$ is a Galois cover with Galois group $(\mathbb{Z}/\ell^n \mathbb{Z})^d$, it was shown in \cite{DuBose/Vallieres:2022} that there exists a polynomial $P \in \mathbb{Q}[t_1,t_2]$ of total degree at most $d$, and linear in $t_2$, such that
\begin{equation} \label{eq:Greenberg}
    \ord_\ell(\kappa(X_{\ell^n}^{(d)})) = P(\ell^n,n),
\end{equation}
for every $n$ which is large enough.
These advances in the Iwasawa theory of finite graphs can be seen as being analogous to more classical theorems and conjectures in the Iwasawa theory of number fields.
More precisely, \eqref{eq:Iwasawa} is analogous to a classical theorem of Iwasawa \cite{Iwasawa:1959} concerning $\mathbb{Z}_\ell$-extensions of number fields, whereas \eqref{eq:Washington} is analogous to a result of Washington for the cyclotomic $\mathbb{Z}_\ell$-extension of an abelian number field, proved in \cite{Washington:1978}, and \eqref{eq:Greenberg} is akin to a conjecture of Greenberg, which is discussed by Cuoco and Monsky in \cite[Section~7]{Cuoco/Monsky:1981}.

The results (\ref{eq:Iwasawa}) and (\ref{eq:Greenberg}) were originally proven by working on the ``analytic side'' of Iwasawa theory, \emph{i.e.} by constructing appropriate elements of the Iwasawa algebra 
\[\mathbb{Z}_\ell\llbracket \mathbb{Z}_\ell^d \rrbracket \cong \mathbb{Z}_\ell\llbracket T_1,\dots,T_d \rrbracket.\] 
On the other hand, Gonet \cite{Gonet:2021a,Gonet:2022} reproved \eqref{eq:Iwasawa} using a module theoretical approach, which was recently shown to be closely related to the analytic approach in the work of Kleine and Müller \cite{Kleine/Muller:2022}. More precisely, this work proves an analogue of the Iwasawa main conjecture in the setting of graphs, which allows Kleine and Müller to prove \eqref{eq:Greenberg} in an algebraic way. 
Moreover, Kataoka's recent work \cite{Kataoka:2023} studies the Fitting ideals that appear in this setting, and Kleine and Müller's more recent work \cite{Kleine/Muller:2023} shows how to adapt some of these ideas to the non abelian setting.

To conclude this overview, let us mention that the recent work of Lei and Müller \cite{Lei_Mueller_2023a,Lei_Mueller_2023b} shows how one can obtain natural towers of finite graphs by looking at the isogeny graphs associated to elliptic curves defined over a finite field $\mathbb{F}$. More precisely, in \cite{Lei_Mueller_2023a} the authors consider $\ell$-isogeny graphs $\tilde{G}_N^m$ of ordinary elliptic curves with a $\Gamma_1(N p^m)$-level structure, where $p$ is the characteristic of $\mathbb{F}$, while $\ell$ is a prime different from $p$ and $N$ is a fixed integer coprime to $p$. In particular, they fix an ordinary elliptic curve $E$ defined over $\mathbb{F}$, which also admits a non-trivial $\ell$-isogeny defined over $\mathbb{F}$, and they prove that there exists an integer $m_0$ such that the connected components $(\tilde{\mathcal{G}}_N^m)_{m = m_0}^{+\infty}$ of the graphs $(\tilde{G}_N^m)_{m = m_0}^{+\infty}$ which contain a vertex corresponding to $E$ give rise to a $\mathbb{Z}_p$-tower.
These graphs generalize the celebrated isogeny volcanoes, which are vastly used in cryptography, and have been classified in recent work of Bambury, Campagna and Pazuki \cite{Bambury_Campagna_Pazuki_2022}.
In the subsequent paper \cite{Lei_Mueller_2023b}, Lei and Müller considered $\ell$-isogeny graphs of elliptic curves with full $\Gamma(N p^n)$-level structures, and they showed that their ordinary connected components do not give rise to Galois covers, while their supersingular ones do, at least when $N \leq 2$ and for a positive proportion of primes $p$.
In this case, the resulting tower has a non-abelian Galois group, isomorphic to $\mathrm{GL}_2(\mathbb{Z}_p)$, which therefore fits into the framework developed by Kleine and Müller in \cite{Kleine/Muller:2023}.

\subsection{Main results}
Inspired by the results mentioned in the previous section, we show in the present paper how the invariants appearing in \eqref{eq:Iwasawa}
and \eqref{eq:Washington} can be explicitly computed when \eqref{eq:tower} comes from a $\mathbb{Z}$-cover of finite graphs.
More precisely, every Galois cover of graphs $Y/X$ with Galois group $G$ can be constructed from a voltage assignment, which is a function $\alpha \colon \mathbf{E}_X \to G$ such that $\alpha(\overline{e}) = \alpha(e)^{-1}$ for every $e \in \mathbf{E}_X$, where $\mathbf{E}_X$ denotes the set of directed edges of $X$, and $\overline{e}$ denotes the inverse of an edge (see \cref{loc_fin_graph} for further details).
Indeed, if $G$ is an arbitrary group and $\alpha \colon \mathbf{E}_X \to G$ is a voltage assignment, one can construct a graph $X(G,\alpha)$, introduced by Gross in \cite{Gross:1974}, which generalizes the usual notion of a Cayley graph, and is endowed with a canonical map $X(G,\alpha) \to X$, which is a Galois cover if and only if $X(G,\alpha)$ is connected, in which case $\mathrm{Gal}(X(G,\alpha)/X) \cong G$.
On the other hand, if $Y/X$ is a Galois cover of finite graphs, with Galois group $G$, there exists a voltage assignment $\alpha \colon \mathbf{E}_X \to G$ and an isomorphism of covers $Y/X \cong X(G,\alpha)/X$, as outlined in \cite[Section~3]{DuBose/Vallieres:2022}.

Now, let $G$ be an arbitrary group, and $\alpha \colon \mathbf{E}_X \to G$ be a voltage assignment. Then, for every normal subgroup $H \trianglelefteq G$ which has finite index, one has a finite graph $X_H := X(G/H,\alpha_H)$, where $\alpha_H \colon \mathbf{E}_X \to G/H$ denotes the voltage assignment obtained by composing $\alpha$ with the natural projection map $\pi \colon G \twoheadrightarrow G/H$.
If each of the finite graphs $X_H$ is connected, then it is a Galois cover of $X$, whose Galois group is canonically isomorphic to $G/H$.
In this setting, one of the main goals, which is related to the results mentioned above, is to describe how the number of spanning trees $\kappa(X_H)$ depends on $H$.
When $G = \mathbb{Z}_\ell^d$ for some $d \geq 1$, this is the content of the results which we recalled in the previous section, that lead to \eqref{eq:Iwasawa}, \eqref{eq:Washington} and \eqref{eq:Greenberg}.

As we mentioned above, in this paper we focus on the case $G = \mathbb{Z}$, and we provide a global analogue of the results obtained in \eqref{eq:Iwasawa} and \eqref{eq:Washington}.
More precisely, for every finite graph $X$ and every voltage assignment $\alpha \colon \mathbf{E}_X \to \mathbb{Z}$ such that each finite graph $X_n := X(\mathbb{Z}/n \mathbb{Z},\alpha_n)$ is connected, where $\alpha_n := \alpha_{n \mathbb{Z}}$, we show in \cref{thm:kN_Pierce_Lehmer} that the number of spanning trees $\kappa(X_n)$ of the graph $X_n$ is intimately related to the Pierce-Lehmer sequence $\{ \Delta_n(J_\alpha) \}_{n=1}^{+\infty}$ associated to a factor $J_\alpha \in \mathbb{Z}[t]$ of the \emph{Ihara polynomial} $\mathcal{I}_\alpha \in \mathbb{Z}[t^{\pm 1}]$, which is a Laurent polynomial that can be explicitly constructed from the voltage assignment $\alpha$, as we explain in \cref{growth}.
The Archimedean and $p$-adic absolute values of the aforementioned Pierce-Lehmer sequence, introduced by Pierce \cite{Pierce_1916} and Lehmer \cite{Lehmer_1933}, turn out to be related to the Archimedean and $p$-adic Mahler measures of the polynomial $\mathcal{I}_\alpha$, as we explain in \cref{sec:Pierce_Lehmer}.
In particular, these Mahler measures provide the main term which explains the order of growth of the different absolute values of the Pierce-Lehmer sequence $\{ \Delta_n(J_\alpha) \}_{n=1}^{+\infty}$.
This suffices to describe the asymptotic behaviour of the Archimedean (respectively $p$-adic) absolute value of the number of spanning trees $\kappa(X_n)$ whenever no root of $\mathcal{I}_\alpha$ lies on the unit circle of $\mathbb{C}$ (respectively $\mathbb{C}_p$), as we explain in \cref{cor:archimedean_asypmtotics,cor:p-adic_asymptotics}. In particular, we show in \cref{ex:Mednykh_1,ex:Mednykh_2} that our Archimedean result generalizes previous work of A.D. Mednykh and I.A. Mednykh \cite{Mednykh:2018,Mednykh:2019}.

One may of course wonder about the behaviour of the Archimedean or $p$-adic absolute value of $\kappa(X_n)$ when $\mathcal{I}_\alpha$ has some of its roots on the Archimedean (or $p$-adic) unit circle.
In fact, this question is central to understand the behaviour of the sequence $\kappa(X_n)$, as for almost every prime $p$ all the roots of $\mathcal{I}_\alpha$ will lie on the $p$-adic unit circle. 
In the case of the Archimedean absolute value, one can only get some upper and lower bounds for $\kappa(X_n)$, but not an exact asymptotic, as follows from Weyl's equidistribution theorem (see \cref{rmk:Weyl}).
In the $p$-adic case, to understand the absolute value of $\kappa(X_n)$ one needs to take into account a correcting factor which is described in \cref{thm:PL_unit_circle}.  Doing so, we arrive at the following result which we now present in a simplified version.  For the precise formulation, see \cref{p_adic_val}.
\begin{theorem} \label{thm:main_thm_intro}
Let $X$ be a finite connected graph whose Euler characteristic $\chi(X)$ does not vanish, and $\alpha \colon \mathbf{E}_X \to \mathbb{Z}$ be a voltage assignment such that for every $n \geq 1$ the finite graph \[X_n := X(\mathbb{Z}/n \mathbb{Z},\alpha_n)\] is connected (which can be checked using \cref{connectedness}).  Let $\mathcal{I}_\alpha \in \mathbb{Z}[t^{\pm 1}]$ be the Ihara polynomial associated to $\alpha$, and set \[J_\alpha(t) := t^b (t-1)^{-e} \mathcal{I}_\alpha(t),\] where $b := -\ord_{t=0}(\mathcal{I}_\alpha)$, and $e := \ord_{t=1}(\mathcal{I}_\alpha)$.  Fix a rational prime $p \in \mathbb{N}$, and let
$$\mu_p(X,\alpha) = -m_p(J_\alpha)/\log(p),$$
where $m_p(J_\alpha)$ denotes the logarithmic $p$-adic Mahler measure of $J_\alpha$, defined as in \eqref{eq:log_padic_m}.  
Then, there exist two explicit functions 
\[
    \begin{aligned}
        \N &\to \Z_{\geq 0} \\ n &\mapsto \lambda_{p,n}(X,\alpha)
    \end{aligned} \quad \text{and} \quad
    \begin{aligned}
        \N &\to \Z \\
        n &\mapsto \nu_{p,n}(X,\alpha)
    \end{aligned}
\]
whose images are finite, and an integer $c_{p}(X,\alpha)$,
such that 
\begin{equation} \label{global_for}
{\rm ord}_{p}(\kappa(X_{n})) = \mu_{p}(X,\alpha) \cdot n + \lambda_{p,n}(X,\alpha) \cdot {\rm ord}_{p}(n) + \nu_{p,n}(X,\alpha) + c_{p}(X,\alpha)
\end{equation}
for all $n \in \mathbb{N}$.
\end{theorem}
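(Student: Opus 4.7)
The plan is to reduce the $p$-adic valuation of $\kappa(X_n)$ to that of the Pierce--Lehmer sequence $\Delta_n(J_\alpha)$, and then analyse the latter by a case-by-case decomposition of the roots of $J_\alpha$ according to their $p$-adic size. The starting point is \cref{thm:kN_Pierce_Lehmer}, which expresses $\kappa(X_n)$ as an explicit product involving $\Delta_n(J_\alpha)$, the integer $n$, the Euler characteristic $\chi(X)$, and an $n$-independent factor; the non-vanishing of $\chi(X)$ ensures that this factor is well defined and its $p$-adic valuation will be absorbed into $c_p(X,\alpha)$. Taking $\ord_p$ of both sides reduces the theorem to computing an asymptotic expansion of $\ord_p(\Delta_n(J_\alpha))$.

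Next, I would factor $J_\alpha$ over $\overline{\Q}_p$ as $J_\alpha(t) = a \prod_{i=1}^{d} (t - \alpha_i)$ and split the index set according to whether $|\alpha_i|_p > 1$, $|\alpha_i|_p < 1$, or $|\alpha_i|_p = 1$. For the roots with $|\alpha_i|_p < 1$, the ultrametric inequality gives $\ord_p(\alpha_i^n - 1) = 0$ for every $n \in \N$, while for the roots with $|\alpha_i|_p > 1$ one has $\ord_p(\alpha_i^n - 1) = n \cdot \ord_p(\alpha_i)$. Summing these two contributions and using the definition of the logarithmic $p$-adic Mahler measure given in \eqref{eq:log_padic_m}, the total amounts to $\mu_p(X,\alpha) \cdot n$ plus a constant depending only on $\ord_p(a)$; this step essentially recovers the asymptotic obtained in \cref{cor:p-adic_asymptotics}, but now I keep track of the constants instead of absorbing them into an error term.

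The main work is the analysis of the roots with $|\alpha_i|_p = 1$, for which I would invoke \cref{thm:PL_unit_circle}. Such a root decomposes uniquely as $\alpha_i = \zeta_i \cdot u_i$, where $\zeta_i$ is a root of unity of order $N_i$ coprime to $p$ and $u_i$ is a principal unit in a finite extension of $\Qp$. The factor $\alpha_i^n - 1$ is then a unit whenever $N_i \nmid n$, whereas when $N_i \mid n$ a $p$-adic logarithm argument relates $\ord_p(\alpha_i^n - 1)$ to $\ord_p(n)$ plus an eventually periodic remainder. Summing these local contributions over all unit-circle roots of $J_\alpha$ produces the terms $\lambda_{p,n}(X,\alpha) \cdot \ord_p(n) + \nu_{p,n}(X,\alpha)$ in \eqref{global_for}; both functions depend on $n$ only through its residue class modulo $N := \mathrm{lcm}(N_1,\ldots,N_d)$, so their images in $\Z$ are finite, and $\lambda_{p,n} \geq 0$ since it is a sum of nonnegative local contributions.

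The main obstacle I expect lies in the analysis of the principal units $u_i$ whose slope $\ord_p(u_i - 1)$ is too small for a naive lifting-the-exponent identity to apply directly: in that regime one must either iterate the formula, or temporarily pass to a finite totally ramified extension of $\Qp$ in which the Artin--Hasse logarithm converges, while verifying that the coefficient of $\ord_p(n)$ remains a nonnegative rational integer and that the remainder stays integer-valued. This is precisely the uniformity that \cref{thm:PL_unit_circle} is designed to provide; once it is applied, the assembly of the pieces into \eqref{global_for} becomes a bookkeeping step, with $c_p(X,\alpha)$ absorbing $\ord_p(a)$ together with the $n$-independent normalisation coming from \cref{thm:kN_Pierce_Lehmer}.
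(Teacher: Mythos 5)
Your proposal follows essentially the same route as the paper: \cref{thm:kN_Pierce_Lehmer} reduces $\ord_p(\kappa(X_n))$ to $\ord_p(\Delta_n(J_\alpha))$, which is then evaluated exactly as in \cref{thm:PL_generic} (roots off the $p$-adic unit circle produce the Mahler-measure term $\mu_p(X,\alpha)\cdot n$, with the leading coefficient contributing to the linear term rather than to a constant) and \cref{thm:PL_unit_circle} (Teichmüller decomposition of the unit-circle roots produces $\lambda_{p,n}\cdot\ord_p(n)+\nu_{p,n}$); the paper simply packages these last two steps as \cref{cor:PL_p_adic_valuation}. The only bookkeeping details to make explicit are that the factor $n^{e-1}$ in \eqref{eq:kN_Delta} contributes $(e-1)\cdot\ord_p(n)$, which must be folded into $\lambda_{p,n}$ rather than into the $n$-independent constant $c_p(X,\alpha)$, and that $\nu_{p,n}$ depends not only on the residue of $n$ modulo $\operatorname{lcm}(N_1,\dots,N_d)$ but also on $\min(\ord_p(n),s_p(\beta_j))$ --- which nevertheless takes only finitely many values, so the finiteness of the image still holds.
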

Specializing \eqref{global_for} to the subsequence $\{\kappa(X_{p^{k}}) \}_{k=1}^{\infty}$ gives
$${\rm ord}_{p}(\kappa(X_{p^{k}})) = \mu_{p}(X,\alpha) \cdot p^{k} + \lambda_{p,p^{k}}(X,\alpha) \cdot k + \nu_{p,p^{k}}(X,\alpha) + c_{p}(X,\alpha) $$
and specializing to the subsequence $\{\kappa(X_{\ell^{k}}) \}_{k=1}^{\infty}$, where $\ell$ is another rational prime different from $p$, gives
$${\rm ord}_{p}(\kappa(X_{\ell^{k}})) = \mu_{p}(X,\alpha) \cdot \ell^{k} + \nu_{p,\ell^{k}}(X,\alpha) + c_{p}(X,\alpha). $$
After studying the dependency on $k$ of the constants $\lambda_{p,p^{k}}(X,\alpha), \nu_{p,p^{k}}(X,\alpha)$, and $\nu_{p,\ell^{k}}(X,\alpha)$, one gets back the formulas \eqref{eq:Iwasawa} and \eqref{eq:Washington}, as we explain in \cref{rmk:p_l_adic_graphs}.
Moreover, we obtain similar results by specializing \cref{thm:main_thm_intro} to sequences of integers divisible only by a finite number of primes, as we explain in \cref{cor:PL_Friedman}. These identities can be seen as analogous to a result proven by Friedman \cite{Friedman:1982} for cyclotomic extensions of number fields which are abelian over $\mathbb{Q}$. 

To conclude, we show in \cref{sec:fibonacci} that \cref{thm:main_thm_intro} allows one to recover a well known formula which computes the $p$-adic valuations of Fibonacci numbers, which is due to Lengyel \cite{Lengyel:1995}.

\subsection{Notations and conventions} \label{notations}
Let $p$ be a rational prime.  We let $\mathbb{C}_{p}$ denote a fixed completion of an algebraic closure of the $p$-adic rational numbers $\mathbb{Q}_{p}$.  As usual, $\lvert \cdot \rvert_{p}$ and ${\rm ord}_{p}$ denote the $p$-adic absolute value and the $p$-adic valuation on $\mathbb{C}_{p}$, respectively.  They are related via
$${\rm ord}_{p}(x) = -\frac{\log |x|_{p}}{\log p},  $$
for all $x \in \mathbb{C}_{p}$, and they are normalized so that ${\rm ord}_{p}(p) = 1$. We also denote by $\mathbb{C}$ the field of complex numbers, endowed with the usual Archimedean absolute value $\lvert \cdot \rvert_\infty$.

If $G$ is an abelian group, not necessarily finite, we let $G^{\vee} = {\rm Hom}_{\mathbb{Z}}(G,W_{\infty})$, where $W_{\infty}$ denotes the group of roots of unity in an algebraic closure $\overline{\mathbb{Q}} \subseteq \mathbb{C}$ of $\mathbb{Q}$.  An element of $G^{\vee}$ will be called a character of finite order.  Here, we depart from the usual notation, since $G^{\vee}$ is not necessarily the Pontryagin dual of $G$.  For each rational prime $p$, we fix once and for all an embedding $\overline{\mathbb{Q}} \hookrightarrow \mathbb{C}_{p}$.  Via these embeddings, we view the characters in $G^{\vee}$ as taking values in $\mathbb{C}_{p}$ once a rational prime $p$ has been fixed.  If $n$ is a positive integer, then we let $W_{n}$ denote the group of $n$th roots of unity.  The symbol $\mathbb{N} = \{1,2,\dots\}$ refers to the collection of all positive integers.

%\section{Pierce-Lehmer sequences}
\section{Mahler measures and Pierce-Lehmer sequences} \label{PL}
In this section, we remind the reader about the resultant of two polynomials, which appears in \cref{sec:resultant}, and about the $p$-adic and Archimedean Mahler measures of polynomials, which we treat in \cref{sec:mahler}.
Moreover, we devote \cref{sec:Pierce_Lehmer} to collect some results about Pierce-Lehmer sequences. 
In particular, \cref{thm:PL_generic,thm:PL_unit_circle} provide explicit formulas to compute the $p$-adic valuations of Pierce-Lehmer sequences.

\subsection{Resultant} \label{sec:resultant}
Let $F$ be a field and let 
$$p(t) = a_{m}t^{m} + \ldots + a_{0} = a_{m} \prod_{i=1}^{m}(t - \alpha_{i}) $$
and
$$q(t) = b_{n}t^{n} + \ldots + b_{0} = b_{n} \prod_{j=1}^{n}(t - \beta_{i})$$
be two polynomials in $F[t]$ of degree $m$ and $n$, respectively.  Here, the roots $\alpha_{i}$ and $\beta_{i}$ are assumed to be in a fixed algebraic closure of $F$.  The resultant ${\rm Res}(p,q)$ of $p$ and $q$ is defined to be
\begin{equation} \label{def_res}
{\rm Res}(p,q) = a_{m}^{n}b_{n}^{m} \prod_{i=1}^{m}\prod_{j=1}^{n}(\alpha_{i} - \beta_{j})
\end{equation}
and is easily seen to be an element of $F$.
Let $r(t)$ be another polynomial with coefficients in $F$.  From the definition (\ref{def_res}), the two properties 
$${\rm Res}(p,q) = (-1)^{mn} {\rm Res}(q,p) \text{ and } {\rm Res}(p\cdot r,q) = {\rm Res}(p,q) \cdot {\rm Res}(r,q)$$
follow immediately.  Furthermore, one has
$$a_{m}^{n}\prod_{i=1}^{m}q(\alpha_{i})={\rm Res}(p,q)= (-1)^{mn}b_{n}^{m}\prod_{j=1}^{n}p(\beta_{j})$$
which can be seen as an instance of Weil's reciprocity law for the projective line over $F$.
Finally, the resultant can also be defined as the determinant of the Sylvester matrix of $p$ and $q$, as shown for instance in \cite[Lemma 3.3.4]{Cohen:1993}. This allows one to define the resultant $\mathrm{Res}(f,g) \in R$ of any pair of polynomials $f, g \in R[t]$ which have coefficients in an arbitrary commutative ring with unity $R$.

\subsection{Mahler measure} \label{sec:mahler}
Recall that if
$$f(t) = a_{d}t^{d} + \ldots + a_{0} \in \mathbb{C}[t] $$
is a nonzero polynomial of degree $d$, which can be factorised as
$$f(t) = a_{d} \prod_{i=1}^{d}(t - \alpha_{i}) $$
for some $\alpha_1,\dots,\alpha_d \in \mathbb{C}$, then one defines its Archimedean Mahler measure to be
\begin{equation} \label{eq:Archimedean_Mahler_measure}
    M_{\infty}(f) := |a_{d}|_\infty \prod_{i=1}^{d} {\rm max}\{1,|\alpha_{i}|_\infty \} \in \mathbb{R}_{>0}.
\end{equation}
This invariant, originally studied by Lehmer \cite{Lehmer_1933}, was generalized by Mahler \cite{Mahler_1962} to polynomials with any number of variables.

Now, let $p$ be a rational prime and let
$$g(t) = b_{d}t^{d} + \ldots + b_{0} \in \mathbb{C}_{p}[t] $$
be a nonzero polynomial of degree $d$, which factors as
$$g(t) = b_{d} \prod_{i=1}^{d}(t - \beta_{i}) $$
for some $\beta_1,\dots,\beta_d \in \mathbb{C}_{p}$. 
Following \cite{Ueki_2020}, we define similarly the $p$-adic Mahler measure of $g(t)$ to be
$$M_{p}(g) := \lvert b_{d} \rvert_{p} \prod_{i=1}^{d} {\rm max}\{1,\lvert \beta_{i} \rvert_{p} \} \in \mathbb{R}_{>0}. $$
This invariant, and its Archimedean analogue, are clearly multiplicative.
Furthermore, the $p$-adic Mahler measure of a polynomial \[g(t) = \sum_{i = 0}^d b_i t^i \in \mathbb{C}_p[t]\] can be easily computed from its coefficients, thanks to the formula
\begin{equation} \label{eq:Ueki_p_adic_Mahler}
    M_{p}(g) = {\rm max}\{\lvert b_{i} \rvert_{p} \, | \, i = 0,\ldots,d \},
\end{equation}
which was proved by Ueki in \cite[Proposition~2.7]{Ueki_2020}.
Finally, we introduce the logarithmic Archimedean Mahler measure
\[
m_\infty(f) := \log(M_\infty(f))
\]
of a polynomial $f(t) \in \mathbb{C}[t]$, and analogously the logarithmic $p$-adic Mahler measure
\begin{equation} \label{eq:log_padic_m}
m_{p}(g) := \log(M_{p}(g))
\end{equation}
of a polynomial $g(t) \in \mathbb{C}_p[t]$.

\begin{remark}
We note in passing that the logarithmic $p$-adic Mahler measure introduced in \eqref{eq:log_padic_m} does not coincide with the $p$-adic logarithmic Mahler measure introduced by Besser and Deninger in \cite{Besser_Deninger_1999}, which is a $p$-adic number.
\end{remark}

%\subsection{Pierce-Lehmer sequences}
\subsection{Pierce-Lehmer sequences}
\label{sec:Pierce_Lehmer}

Let 
$$f(t) = a_{d}t^{d} + a_{d-1}t^{d-1} + \ldots + a_{0} \in \mathbb{Z}[t], $$
with $a_{d} \neq 0$ and write 
$$f(t) = a_{d}\prod_{i=1}^{d}(t - \alpha_{i}) $$
for some $\alpha_1,\dots,\alpha_d \in \overline{\mathbb{Q}}$.  
The associated Pierce-Lehmer sequence is defined to be
\begin{equation} \label{eq:Pierce_Lehmer}
%\begin{aligned}
\Delta_{n}(f) = a_{d}^{n}\prod_{i=1}^{d}(\alpha_{i}^{n} - 1) = {\rm Res}(f(t),t^{n}-1) \in \mathbb{Z}. 
%\end{aligned}
\end{equation}
Fix now a rational prime $p$ and an embedding $\overline{\mathbb{Q}} \hookrightarrow \mathbb{C}_{p}$, as we did in \cref{notations}, and view all the algebraic numbers $\alpha_1,\dots,\alpha_d$ as lying in $\mathbb{C}_{p}$ via this embedding.  
\begin{theorem} \label{thm:PL_generic}
With the notation as above, one has
$$|\Delta_{n}(f)|_{p} = M_{p}(f)^{n} \prod_{\substack{i=1 \\ |\alpha_{i}|_{p} = 1}}^{d} |\alpha_{i}^{n}-1|_{p}. $$
\end{theorem}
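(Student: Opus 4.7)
The plan is to compute $|\Delta_n(f)|_p$ directly from the product expression in \eqref{eq:Pierce_Lehmer} by taking $p$-adic absolute values and then partitioning the factors according to where the roots $\alpha_i$ sit relative to the unit circle in $\mathbb{C}_p$. From $\Delta_n(f) = a_d^n \prod_{i=1}^d (\alpha_i^n - 1)$, the multiplicativity of $|\cdot|_p$ gives
\[
|\Delta_n(f)|_p = |a_d|_p^n \prod_{i=1}^d |\alpha_i^n - 1|_p,
\]
so everything reduces to evaluating each factor $|\alpha_i^n - 1|_p$ according to the size of $|\alpha_i|_p$.

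Next I would use the ultrametric inequality to handle the two non-trivial cases. If $|\alpha_i|_p > 1$ then $|\alpha_i^n|_p = |\alpha_i|_p^n > 1 = |1|_p$, so the strong triangle inequality forces $|\alpha_i^n - 1|_p = |\alpha_i|_p^n$. If $|\alpha_i|_p < 1$ then $|\alpha_i^n|_p < 1$, and again the strong triangle inequality forces $|\alpha_i^n - 1|_p = 1$. The roots with $|\alpha_i|_p = 1$ contribute the factor $\prod_{|\alpha_i|_p = 1} |\alpha_i^n - 1|_p$ unchanged; these are precisely the factors whose size cannot be controlled by ultrametric considerations alone, and they are exactly what is left in the statement.

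Assembling these three contributions yields
\[
|\Delta_n(f)|_p = |a_d|_p^n \cdot \Bigl(\prod_{|\alpha_i|_p > 1} |\alpha_i|_p\Bigr)^n \cdot \prod_{|\alpha_i|_p = 1} |\alpha_i^n - 1|_p.
\]
To finish, I would match the first two factors with $M_p(f)^n$. By the definition of the $p$-adic Mahler measure recalled in \cref{sec:mahler},
\[
M_p(f) = |a_d|_p \prod_{i=1}^d \max\{1, |\alpha_i|_p\} = |a_d|_p \prod_{|\alpha_i|_p > 1} |\alpha_i|_p,
\]
since roots with $|\alpha_i|_p \leq 1$ contribute the factor $1$. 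Raising to the $n$-th power and substituting gives the claimed identity. There is no real obstacle here; the only point requiring care is the correct application of the ultrametric inequality in both directions ($|\alpha_i|_p > 1$ and $|\alpha_i|_p < 1$), and the bookkeeping that identifies the leftover product with $M_p(f)^n$ via the definition of the $p$-adic Mahler measure.
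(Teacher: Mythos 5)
Your proof is correct and follows essentially the same route as the paper's: take $p$-adic absolute values of the product formula for $\Delta_n(f)$, use the ultrametric inequality to evaluate $|\alpha_i^n-1|_p$ for roots off the unit circle, and identify the surviving factors with $M_p(f)^n$ via the definition of the $p$-adic Mahler measure. No gaps.
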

\begin{proof}
Noting that for $\alpha \in \mathbb{C}_{p}$ and $n \in \mathbb{N}$, one has
\begin{equation*}
|\alpha^{n} - 1|_{p} = 
\begin{cases}
|\alpha|_{p}^{n}, &\text{ if }  |\alpha|_{p}> 1;\\
1, & \text{ if } |\alpha|_{p} < 1,
\end{cases}
\end{equation*}
one calculates
\begin{equation*}
\begin{aligned}
|\Delta_{n}(f)|_{p} &= |a_{d}|_{p}^{n} \prod_{\substack{i=1 \\ |\alpha_{i}|_{p} > 1}}^{d}|\alpha_{i}|_{p}^{n} \prod_{\substack{i=1 \\ |\alpha_{i}|_{p} = 1}}^{d}|\alpha_{i}^{n} - 1|_{p}\\
&= M_{p}(f)^{n} \prod_{\substack{i=1 \\ |\alpha_{i}|_{p} = 1}}^{d}|\alpha_{i}^{n} - 1|_{p}.
\end{aligned}
\end{equation*}
\end{proof}
It follows that in order to determine the $p$-adic valuation of the numbers $\Delta_{n}(f)$, one needs to understand the $p$-adic valuation of numbers of the form $\alpha^n - 1$, where $n \in \mathbb{N}$ and $\alpha \in \overline{\mathbb{Q}}_{p} \subseteq \mathbb{C}_{p}$ is a $p$-adic number such that $\lvert \alpha \rvert_p = 1$.
The following theorem, which is inspired by \cite[Lemma~2.11]{Ueki_2020}, provides a first step in this direction.

\begin{theorem} \label{thm:PL_unit_circle}
Let $\alpha \in \overline{\mathbb{Q}}_{p}$ be such that $\lvert \alpha \rvert_p = 1$, and assume that $\alpha$ is not a root of unity.  Let $\mathfrak{m}$ be the maximal ideal of the valuation ring $\mathcal{O}$ of $\overline{\mathbb{Q}}_{p}$ and let $N(\alpha)$ be the multiplicative order of $\alpha$ modulo $\mathfrak{m}$.  Then, there exists a function $c:\mathbb{N} \rightarrow \mathbb{Q}$ such that $c(m)$ is constant for $m$ large and for which
\begin{equation} \label{stat1}
{\rm ord}_{p}(\alpha^{n}-1) =
\begin{cases}
0, &\text{ if } N(\alpha) \nmid  n; \\
{\rm ord}_{p}(n) + c({\rm ord}_{p}(n)), &\text{ if } N(\alpha) \,\, |\, \, n.
\end{cases}
\end{equation}
\end{theorem}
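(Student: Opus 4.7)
The proof splits according to the two cases in the statement. When $N(\alpha) \nmid n$, the image of $\alpha^n$ in the residue field $\mathcal{O}/\mathfrak{m}$ is not equal to $1$, so $\alpha^n - 1$ is a unit in $\mathcal{O}$ and ${\rm ord}_p(\alpha^n - 1) = 0$. For the nontrivial case $N(\alpha) \mid n$, I would reduce to studying the principal unit $\beta := \alpha^{N(\alpha)} \in 1 + \mathfrak{m}$ by writing $n = N(\alpha) k$. Since $N(\alpha)$ is the multiplicative order of a nonzero element of a residue field of characteristic $p$, one has $\gcd(N(\alpha), p) = 1$, and hence ${\rm ord}_p(n) = {\rm ord}_p(k)$. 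The hypothesis that $\alpha$ is not a root of unity passes to $\beta$, so $\beta^k - 1 \neq 0$ for every $k \geq 1$, and the problem reduces to showing that ${\rm ord}_p(\beta^k - 1) - {\rm ord}_p(k)$ depends only on ${\rm ord}_p(k)$ and stabilises as ${\rm ord}_p(k) \to \infty$.

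Setting $\delta_j := \beta^{p^j} - 1$, a binomial expansion of $(1 + \delta_j)^p - 1 = p \delta_j + \binom{p}{2} \delta_j^2 + \ldots + \delta_j^p$ combined with the ultrametric inequality and ${\rm ord}_p\bigl(\binom{p}{i}\bigr) = 1$ for $1 \leq i \leq p-1$ yields ${\rm ord}_p(\delta_{j+1}) \geq \min(1 + {\rm ord}_p(\delta_j),\, p \cdot {\rm ord}_p(\delta_j))$. Since ${\rm ord}_p(\delta_0) > 0$, this inequality shows that while ${\rm ord}_p(\delta_j) \leq \tfrac{1}{p-1}$ one has ${\rm ord}_p(\delta_{j+1}) \geq p \cdot {\rm ord}_p(\delta_j)$; the sequence therefore grows at least geometrically and eventually exceeds $\tfrac{1}{p-1}$, so one may select the smallest such index $k_0 \geq 0$. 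For ${\rm ord}_p(k) \geq k_0$, writing $k = p^a k'$ with $\gcd(k', p) = 1$, the $p$-adic logarithm converges on $1 + \delta_{k_0}$, and one may write
\[
\beta^k - 1 = \exp\bigl(p^{a - k_0} k' \log(1 + \delta_{k_0})\bigr) - 1.
\]
Since ${\rm ord}_p(\log(1+\delta_{k_0})) = {\rm ord}_p(\delta_{k_0}) > \tfrac{1}{p-1}$, the exponential preserves valuations, yielding ${\rm ord}_p(\beta^k - 1) = {\rm ord}_p(k) - k_0 + {\rm ord}_p(\delta_{k_0})$. The constant $C := {\rm ord}_p(\delta_{k_0}) - k_0$ thus serves as the stable value of $c$ on $\{m \geq k_0\}$.

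For the remaining case ${\rm ord}_p(k) = j < k_0$, I would write $k = p^j k'$ with $\gcd(k', p) = 1$ and expand $\beta^k - 1 = (1 + \delta_j)^{k'} - 1 = k' \delta_j + \binom{k'}{2} \delta_j^2 + \ldots + \delta_j^{k'}$. Since ${\rm ord}_p(\delta_j) > 0$ and each higher-degree term has valuation at least $2\,{\rm ord}_p(\delta_j)$, the leading term $k' \delta_j$ strictly dominates, giving ${\rm ord}_p(\beta^k - 1) = {\rm ord}_p(\delta_j)$, a quantity depending only on $j$. Setting $c(j) := {\rm ord}_p(\delta_j) - j$ for $j < k_0$ and $c(j) := C$ for $j \geq k_0$ produces the desired eventually constant function. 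The principal technical hurdle is the elementary but slightly delicate check, based on the binomial expansion above, that $k_0$ exists at all; the remainder of the argument is a standard application of the $p$-adic logarithm and exponential, in the spirit of \cite[Lemma~2.11]{Ueki_2020}.
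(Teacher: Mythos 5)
Your proof is correct, but it follows a genuinely different route from the paper's. The paper factors $t^n-1$ over the $n$-th roots of unity, writes $\lvert \alpha^n-1\rvert_p=\prod_{\zeta\in W_n}\lvert\alpha-\zeta\rvert_p$, and isolates the contribution of the roots of unity $\xi\mu$ near $\alpha$, where $\xi=\tau(\pi(\alpha))$ is the Teichm\"uller lift of the residue of $\alpha$ and $\mu$ ranges over $p$-power roots of unity; the key inputs are the formula $\ord_p(1-\mu)=\tfrac{1}{p^{k-1}(p-1)}$ and the identity $\prod_{\mu\in W_{p^m}\setminus\{1\}}\lvert 1-\mu\rvert_p=\lvert n\rvert_p$. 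You instead reduce to the principal unit $\beta=\alpha^{N(\alpha)}$ and track $\delta_j=\beta^{p^j}-1$ directly: the binomial recurrence $\ord_p(\delta_{j+1})\geq\min\bigl(1+\ord_p(\delta_j),\,p\,\ord_p(\delta_j)\bigr)$ forces the valuations past the threshold $\tfrac{1}{p-1}$ after finitely many steps, after which the $p$-adic logarithm and exponential linearise the problem; below the threshold the prime-to-$p$ part of the exponent is absorbed by the dominance of the linear term in $(1+\delta_j)^{k'}-1$. Both arguments are complete and produce the same function $c$ (your $c(j)=\ord_p(\delta_j)-j$ matches the paper's $\ord_p(\alpha^{p^r}-\xi^{p^r})-r$ with $r=\min(j,s)$). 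What each buys: your treatment of the case $N(\alpha)\nmid n$ is a one-line residue-field observation, shorter than the paper's, and the whole argument avoids introducing the Teichm\"uller lift; the paper's version, on the other hand, makes the lift $\xi$ and the threshold index $s$ explicit, and these explicit quantities are reused downstream (in the definitions of $\nu_{p,n}$ and $s_p(\beta)$ in \cref{cor:PL_p_adic_valuation} and in the Fibonacci computation of \cref{sec:fibonacci}), so the paper's formulation is the one the rest of the article is built on.
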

\begin{proof}
First of all, let us write
\begin{equation} \label{eq:power_root_of_unity}
    \lvert \alpha^n - 1 \rvert_p = \prod_{\zeta \in W_n} \lvert \alpha - \zeta \rvert_p
\end{equation}
where $W_n \subseteq \mathcal{O}^\times$ denotes the group of roots of unity of order dividing $n$.
Now, the natural embedding of the ring of Witt vectors of $\overline{\mathbb{F}}_p$ inside $\mathcal{O}$ gives rise to the Teichm\"{u}ller lift
\[\tau: \overline{\mathbb{F}}_{p}^{\times} \hookrightarrow \mathcal{O}^{\times},\]
which is a morphism of groups that sends any $\beta \in \overline{\mathbb{F}}_p^\times$ to a root of unity whose order coincides with the multiplicative order of $\beta$ in $\overline{\mathbb{F}}_p^\times$.
Therefore, if \[\pi:\mathcal{O} \twoheadrightarrow \mathcal{O}/\mathfrak{m} = \overline{\mathbb{F}}_{p}\] is the natural projection map, the root of unity $\xi = \tau( \pi(\alpha)) \in \mathcal{O}^\times$ has order $N = N(\alpha)$, and we have that
$\lvert \alpha - \xi \rvert_{p} < 1 $ because $\tau$ is a section of $\pi$.

We can then use this root of unity $\xi$ to write the following formula
\begin{equation} \label{key}
|\alpha^{n} - 1|_{p}
= \prod_{\zeta \in W_{n}} |\alpha - \xi + \xi - \zeta|_{p},
\end{equation}
which follows from \eqref{eq:power_root_of_unity}.
Using this formula, we can prove immediately the first part of \eqref{stat1}. Indeed, if $N \nmid n$ then for every $\zeta \in W_n$ the order of the root of unity $\zeta/\xi$ is not a power of $p$, because otherwise there would exist some $r \in \mathbb{N}$ such that $\zeta^{p^{r}} = \xi^{p^{r}}$, from which it would follow that $\xi^{np^{r}} = 1$, and thus that $N \, | \, np^{r}$. Since we know that $(N,p) = 1$, this would imply that $N \, | \, n$, contradicting our assumption. 
Therefore, \cite[Lemma~2.9]{Ueki_2020} implies that
$$|\xi - \zeta|_{p} = |1 - \zeta/\xi|_{p} = 1 $$
for all $\zeta \in W_{n}$, which entails that $\lvert \alpha - \xi + \xi - \zeta \rvert_p = 1$ for every $\zeta \in W_n$, because $\lvert \alpha - \xi \rvert_p < 1$ by construction. Finally, we see thanks to \eqref{key} that $|\alpha^{n} - 1|_{p} = 1$, which proves the first part of the statement (\ref{stat1}).

Suppose now that $N \, | \, n$.
As before, we have that $\lvert \xi - \zeta \rvert_p = 1$ unless the order of $\mu := \zeta/\xi$ is a power of $p$.
Therefore
\begin{equation} \label{eq:alphan_only_padic}
    \lvert \alpha^n - 1 \rvert_p = \prod_{\mu \in W_{p^m}} \lvert \alpha - \xi \mu \rvert_p = \prod_{\mu \in W_{p^m}} \lvert \alpha - \xi + \xi(1-\mu) \rvert_p,
\end{equation}
where $m := \ord_p(n)$.
Moreover, if $\mu$ has order $p^k$, for some $k \in \mathbb{N} \cup \{0\}$ such that
\[
    p^{k-1} (p-1) \ord_p(\alpha-\xi) > 1
\]
we have that $\lvert \xi (1-\mu) \rvert_p = \lvert 1 - \mu \rvert_p > \lvert \alpha-\xi \rvert_p$, as follows from the classical fact that 
\[\ord_p(1-\mu) = \frac{1}{p^{k-1} (p-1)}\] 
for every root of unity $\mu \in W_{p^m} \setminus \{1\}$ of order $p^k$, whose proof can be found for example in \cite[Lemma~2.9]{Ueki_2020}. 

Hence, if we define $s \in \mathbb{N} \cup \{0\}$ to be the minimal non-negative integer such that
\begin{equation} \label{eq:pr_inequality}
    p^s (p-1) \ord_p(\alpha - \xi) > 1,
\end{equation}
and we set $r := \min(s,m)$, we see from \eqref{eq:alphan_only_padic} that 
%\DV{If $s  = r = 0$, but $m$ is non zero, is the second line below true?  I don't think you always get a strict inequality that allows you to use the non-archimedean property of the $2$-adic absolute value.  Right?  If $p=2$ and ${\rm ord}_{2}(\alpha - \xi) = 1$, then $s=0$, but if $\mu = -1$, then ${\rm ord}_{2}(1 - \mu) = 1$ as well.  So I still think we need our previous definition for $s$ and not this one.}
\begin{equation*}
\begin{aligned}
|\alpha^{n} - 1|_{p} &= \left( \prod_{\mu \in W_{p^r}} \lvert \alpha - \xi \mu \rvert_p \right) \left( \prod_{\mu \in W_{p^m} \setminus W_{p^r}} \lvert \alpha - \xi + \xi (1 - \mu) \rvert_p \right) \\ &= |\alpha - \xi|_{p} \left( \prod_{\mu \in W_{p^r} \setminus \{1\}} \frac{\lvert \alpha - \xi \mu \rvert_p}{|1 - \mu|_{p}} \, |1 - \mu|_{p} \right) \left( \prod_{\mu \in W_{p^m} \setminus W_{p^r}}|1 - \mu|_{p} \right) \\
&= |\alpha - \xi|_{p} \left( \prod_{\mu \in W_{p^r} \setminus \{1\}} \frac{\lvert \alpha - \xi \mu \rvert_p}{\lvert 1 - \mu \rvert_p} \right) \left( \prod_{\mu \in W_{p^m} \setminus \{1\}} |1 - \mu|_{p} \right) \\
&= |\alpha - \xi|_{p} \left( \prod_{\mu \in W_{p^r} \setminus \{1\}} \frac{\lvert \alpha - \xi \mu \rvert_p}{\lvert 1 - \mu \rvert_p} \right) |n|_{p},
\end{aligned}
\end{equation*}
where the last equality follows from the fact that 
%\DV{Is this true if $s<m$?  I think you want $m$ rather than $r$ in the line below?}
\[
\prod_{\mu \in W_{p^m} \setminus \{1\}} |1 - \mu|_{p} = \prod_{\mu \in W_n \setminus \{1\}} \lvert 1 - \mu \rvert_p = \left\lvert \mathrm{Res}\left( \frac{t^n - 1}{t-1}, t-1 \right) \right\rvert_p = \lvert n \rvert_p.
\]
Therefore, we see that $\mathrm{ord}_p(\alpha^n - 1) = \ord_p(n) + c(\mathrm{ord}_p(n))$, where the expression
\[
\begin{aligned}
    c(m) &:= \mathrm{ord}_p(\alpha - \xi) + \sum_{\mu \in W_{p^r} \setminus \{1\}} (\ord_p(\alpha - \xi \mu) - \ord_p(1 - \mu))\\
     &= \ord_p(\alpha^{p^r} - \xi^{p^r}) - r
\end{aligned}
\]
depends only on $m$ and $\alpha$, and is evidently constant when $m$ becomes sufficiently large.
This proves the second part of \eqref{stat1}, and allows us to conclude.
\end{proof}

\begin{remark}
    Let $\alpha \in \overline{\mathbb{Q}}$. Then, there exists a finite subset $S \subseteq \mathbb{N}$ such that for every rational prime $p \in \mathbb{N} \setminus S$ and every embedding $\iota \colon \mathbb{Q}(\alpha) \hookrightarrow \overline{\mathbb{Q}}_p$ we have that $\lvert \iota(\alpha) \rvert_p = 1$ and \[\ord_p(\iota(\alpha) - \tau(\pi(\iota(\alpha)))) \in \mathbb{N}.\] Therefore, for every rational prime $p \in \mathbb{N} \setminus (S \cup \{2\})$ we see that $s = 0$ is the minimal non-negative integer such that \eqref{eq:pr_inequality} holds true.
\end{remark}

From \cref{thm:PL_generic,thm:PL_unit_circle}, we obtain several corollaries. First of all, one can obtain an explicit formula for the $p$-adic valuation of the elements of the Pierce-Lehmer sequence associated to a polynomial $f \in \mathbb{Z}[t]$.
\begin{corollary} \label{cor:PL_p_adic_valuation}
Let $f \in \mathbb{Z}[t] \setminus \{0\}$ be a polynomial which does not vanish at any root of unity, and fix a prime $p$. Let $\beta_1,\dots,\beta_d \in \overline{\mathbb{Q}}_p$ denote the $p$-adic roots of $f$, counted with multiplicity. Then, for every $n \in \mathbb{N}$ we define
\begin{align}
\mu_p(f) &:= -m_p(f)/\log(p) \label{eq:mu_p} \\
B_{p,n}(f) &:= \{ \beta \in \overline{\mathbb{Q}}_p \colon f(\beta) = 0, \lvert \beta \rvert_p = 1, \lvert \beta^n - 1 \rvert_p < 1 \}, \label{eq:Bpn} \\
\lambda_{p,n}(f) &:= \#\{ j \in \{1,\dots,d\} \colon \beta_j \in B_{p,n}(f) \} \label{eq:lambdapn}
\end{align}
and for every $\beta \in \overline{\mathbb{Q}}_p$ such that $\lvert \beta \rvert_p = 1$ we set
\begin{equation} \label{eq:sp_beta}
s_p(\beta) := \min\{s \in \mathbb{Z}_{\geq 0} \colon p^s (p-1) \ord_p(\beta - \tau_p(\pi_p(\beta))) > 1 \}
\end{equation}
and we write $r_{p,n}(\beta) := \min(\ord_p(n),s_p(\beta))$.
Using this notation, we have that
\begin{equation} \label{eq:PL_p-adic_valuation}
\ord_p(\Delta_n(f)) = \mu_p(f) \cdot n + \lambda_{p,n}(f) \cdot \ord_p(n) + \nu_{p,n}(f),
\end{equation}
where $\nu_{p,n}(f) := \sum_{\substack{j \in \{1,\dots,d\} \\ \beta_j \in B_{p,n}(f)}} \left( \ord_p(\beta_j^{p^{r_{p,n}(\beta_j)}} - \tau_p(\pi_p(\beta_j))^{p^{r_{p,n}(\beta_j)}}) - r_{p,n}(\beta_j) \right)$.
\begin{comment}
Let $f \in \mathbb{Z}[t] \setminus \{0\}$. Then, for every prime $p$ and every $n \in \mathbb{N}$ we define
    \[
        \begin{aligned}
            \mu_p(f) &:= -m_p(f)/\log(p) \\
            B_{p,n}(f) &:= \{ \beta \in \overline{\mathbb{Q}}_p \colon f(\beta) = 0, \lvert \beta \rvert_p = 1, \lvert \beta^n - 1 \rvert_p < 1 \},
        \end{aligned}
    \]
    and for every $\beta \in B_{p,n}(f)$ we set
    \[
        s_p(\beta) := \min\{s \in \mathbb{N} \colon p^s (p-1) \ord_p(\beta - \tau_p(\pi_p(\beta))) > 1 \}
    \]
    and we write $r_{p,n}(\beta) := \min(\ord_p(n),s_p(\beta))$.
    Using this notation, we have that
    \[
        \ord_p(\Delta_n(f)) = \mu_p(f) \cdot n + \# B_{p,n}(f) \cdot \ord_p(n) + \nu_{p,n}(f),
    \]
    where $\nu_{p,n}(f) := \sum_{\beta \in B_{p,n}(f)} \left( \ord_p(\beta^{p^{r_{p,n}(\beta)}} - \tau_p(\pi_p(\beta))^{p^{r_{p,n}(\beta)}}) - r_{p,n}(\beta) \right)$.
\end{comment}
\end{corollary}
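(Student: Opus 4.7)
The plan is to obtain \eqref{eq:PL_p-adic_valuation} by combining \cref{thm:PL_generic} and \cref{thm:PL_unit_circle} directly. Starting from \cref{thm:PL_generic}, I would first take $-\log(\cdot)/\log(p)$ of both sides of the identity
\[
|\Delta_n(f)|_p = M_p(f)^n \prod_{\substack{j = 1 \\ |\beta_j|_p = 1}}^d |\beta_j^n - 1|_p,
\]
so that the left-hand side becomes $\ord_p(\Delta_n(f))$ and the contribution coming from $M_p(f)^n$ equals $-n \cdot m_p(f)/\log(p) = \mu_p(f) \cdot n$ by \eqref{eq:mu_p}. It remains to control the sum $\sum_{|\beta_j|_p = 1} \ord_p(\beta_j^n - 1)$.

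For each root $\beta_j$ of $f$ with $|\beta_j|_p = 1$, the hypothesis that $f$ does not vanish at any root of unity lets me apply \cref{thm:PL_unit_circle} to $\beta_j$. The key observation is the identification of the set $B_{p,n}(f)$ defined in \eqref{eq:Bpn} with the roots satisfying the second case in \eqref{stat1}: since $\pi_p(\beta_j) \in \overline{\mathbb{F}}_p^\times$ has order $N(\beta_j)$, the condition $|\beta_j^n - 1|_p < 1$ is equivalent to $\pi_p(\beta_j)^n = 1$, i.e.\ to $N(\beta_j) \mid n$. Consequently, by the first case of \eqref{stat1}, every root $\beta_j$ on the unit circle but outside $B_{p,n}(f)$ contributes $0$ to the sum, and the number of remaining roots (counted with multiplicity) equals $\lambda_{p,n}(f)$ by \eqref{eq:lambdapn}.

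For each $\beta_j \in B_{p,n}(f)$, the second case of \eqref{stat1} gives $\ord_p(\beta_j^n - 1) = \ord_p(n) + c_{\beta_j}(\ord_p(n))$. Tracing the explicit formula produced at the end of the proof of \cref{thm:PL_unit_circle}, one reads off $c_{\beta_j}(m) = \ord_p\!\left(\beta_j^{p^r} - \tau_p(\pi_p(\beta_j))^{p^r}\right) - r$ with $r = \min(m, s_p(\beta_j))$, where $s_p(\beta_j)$ is precisely the integer defined in \eqref{eq:sp_beta}. Specialising $m = \ord_p(n)$ yields $r = r_{p,n}(\beta_j)$, so the total contribution from the unit-circle roots equals $\lambda_{p,n}(f) \cdot \ord_p(n) + \nu_{p,n}(f)$ with $\nu_{p,n}(f)$ exactly as written in the statement.

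Adding the two contributions produces \eqref{eq:PL_p-adic_valuation}, completing the argument. The proof is essentially mechanical given the two previous theorems; the only genuine verification is the equivalence $\beta_j \in B_{p,n}(f) \Leftrightarrow N(\beta_j) \mid n$, which I expect to be the one subtle step but which follows immediately from the fact that the Teichm\"uller section $\tau_p$ is a homomorphism preserving multiplicative orders.
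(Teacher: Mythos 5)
Your argument is correct and follows exactly the route the paper takes: apply \cref{thm:PL_generic} to isolate the $\mu_p(f)\cdot n$ term, identify $B_{p,n}(f)$ with the roots satisfying $N(\beta_j)\mid n$, and then read off the explicit value of $c(\ord_p(n))$ from the closed form $\ord_p(\alpha^{p^r}-\xi^{p^r})-r$ established at the end of the proof of \cref{thm:PL_unit_circle}. The only difference is that you spell out the equivalence $\lvert\beta^n-1\rvert_p<1\Leftrightarrow N(\beta)\mid n$, which the paper leaves implicit.
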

\begin{proof}
    We see from \cref{thm:PL_generic} that
    \[
        \ord_p(\Delta_n(f)) = \mu_p(f) \cdot n + \sum_{\substack{j \in \{1,\dots,d\} \\ \beta_j \in B_{p,n}(f)}} \ord_p(\beta_j^n - 1),
    \]
    and \cref{thm:PL_unit_circle} implies that
    \[
        \ord_p(\beta^n - 1) = \ord_p(n) + \ord_p(\beta^{p^{r_{p,n}(\beta)}} - \tau_p(\pi_p(\beta))^{p^{r_{p,n}(\beta)}}) - r_{p,n}(\beta)
    \]
    for every $\beta \in B_{p,n}(f)$, which allows us to conclude.
\end{proof}

Moreover, we can use \cref{thm:PL_generic,thm:PL_unit_circle} to pin down the asymptotic behaviour of the $p$-adic valuation of the Pierce-Lehmer sequence associated to an integral polynomial which does not vanish on the $p$-adic unit circle, or on roots of unity.

\begin{corollary} \label{cor:PL_p-adic_asymptotic}
Let $f(t) \in \mathbb{Z}[t] \setminus \{0\}$, and assume that $f(\alpha) \neq 0$ for every $\alpha \in \mathbb{C}_p$ such that $\lvert \alpha \rvert_p = 1$. Then, we have 
\begin{equation} \label{eq:p-adic_Pierce_Lehmer}
    |\Delta_{n}(f)|_{p} = M_{p}(f)^{n}
\end{equation}
for all $n \in \mathbb{N}$.  
If one only assumes that $f(\zeta) \neq 0$ for every $\zeta \in W_\infty$, one has
$$|\Delta_{n}(f)|_{p}^{1/n} \to M_{p}(f) $$
as $n \to \infty$.  
\end{corollary}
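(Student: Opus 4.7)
The plan is to deduce both statements as essentially immediate consequences of \cref{thm:PL_generic}, with the second statement requiring also an asymptotic input from \cref{thm:PL_unit_circle}. Throughout, let $\alpha_1,\ldots,\alpha_d \in \overline{\mathbb{Q}}_p \subseteq \mathbb{C}_p$ denote the roots of $f$, counted with multiplicity, viewed in $\mathbb{C}_p$ via the embedding fixed in \cref{notations}.

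For the first assertion, the hypothesis that $f$ has no root on the $p$-adic unit circle means that the index set $\{ i : |\alpha_i|_p = 1 \}$ is empty, so the product appearing in the conclusion of \cref{thm:PL_generic} is an empty product equal to $1$. The identity \eqref{eq:p-adic_Pierce_Lehmer} then follows at once.

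For the second assertion, I would again invoke \cref{thm:PL_generic} to write
\[
|\Delta_n(f)|_p^{1/n} = M_p(f) \cdot \prod_{\substack{i=1 \\ |\alpha_i|_p = 1}}^{d} |\alpha_i^n - 1|_p^{1/n},
\]
and thereby reduce the claim to showing that each factor $|\alpha_i^n - 1|_p^{1/n}$ tends to $1$ as $n \to \infty$. Since $f$ is assumed not to vanish at any root of unity, every such $\alpha_i$ satisfies the hypotheses of \cref{thm:PL_unit_circle}. That theorem supplies, for each $i$, a function $c_i : \mathbb{N} \to \mathbb{Q}$ which is eventually constant, hence has finite image and is in particular bounded by some constant $C_i$, with the property that $\ord_p(\alpha_i^n - 1)$ either vanishes (when $N(\alpha_i) \nmid n$) or equals $\ord_p(n) + c_i(\ord_p(n))$ (when $N(\alpha_i) \mid n$). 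In either case one obtains a uniform estimate
\[
0 \leq \ord_p(\alpha_i^n - 1) \leq \ord_p(n) + C_i.
\]

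The closing step is the elementary observation that $\ord_p(n)/n \to 0$ as $n \to \infty$, since $\ord_p(n) \leq \log(n)/\log(p)$. Combined with the boundedness of $C_i$, this forces $\ord_p(\alpha_i^n - 1)/n \to 0$, and hence $|\alpha_i^n - 1|_p^{1/n} = p^{-\ord_p(\alpha_i^n - 1)/n} \to 1$. Multiplying over the finitely many indices $i$ with $|\alpha_i|_p = 1$ finishes the argument. I do not foresee any real obstacle here: the first part is immediate, and the second reduces to a trivial comparison between the growth rates of $\ord_p(n)$ and $n$; the only point worth being careful about is that the function $c$ provided by \cref{thm:PL_unit_circle} is globally bounded on $\mathbb{N}$, which is guaranteed by its eventual constancy.
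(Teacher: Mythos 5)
Your proposal is correct and follows exactly the route the paper intends: the published proof simply states that the corollary ``follows directly from \cref{thm:PL_generic,thm:PL_unit_circle}'' and leaves the details to the reader, and your argument supplies precisely those details (empty product for the first claim; boundedness of the eventually constant function $c$ together with $\ord_p(n)/n \to 0$ for the second). No discrepancies to report.
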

\begin{proof}
This follows directly from \cref{thm:PL_generic,thm:PL_unit_circle}. We leave the details to the reader.
\end{proof}

\begin{remark}
    A similar result holds true for the Archimedean Mahler measure.  
    More precisely, we see directly from the definition given in \eqref{eq:Archimedean_Mahler_measure} that for every polynomial $f \in \mathbb{Z}[t] \setminus \{0\}$ that does not vanish on the unit circle of $\mathbb{C}$, the asymptotic
    \begin{equation} \label{euclidean_growth_away_uc}
    |\Delta_{n}(f)|_\infty \sim M_{\infty}(f)^{n}
    \end{equation}
    holds true as $n \to +\infty$.
    If one only assumes that the roots of $f$ are not roots of unity, then one sees that
\begin{equation} \label{euclidean_growth}
|\Delta_{n}(f)|_\infty^{1/n} \to M_{\infty}(f),
\end{equation}
as $n \to \infty$. This follows from an inequality originally proved by Gelfand, as explained for instance in \cite[Lemma 1.10]{Everest/Ward:1999}.
On the other hand, if $f$ has some root on the unit circle of $\mathbb{C}$, the behaviour of the absolute values $\lvert \Delta_n(f) \rvert_\infty$ is quite chaotic, as exemplified for instance by \cite[Theorem~2.16]{Everest/Ward:1999}, which shows that the sequence of ratios $\lvert \Delta_n(f)/\Delta_{n-1}(f) \rvert_\infty$ converges if and only if $f$ has no roots on the unit circle of $\mathbb{C}$.
\end{remark} 

The $p$-adic valuation of various subsequences of a Pierce-Lehmer sequence can be understood from \cref{thm:PL_generic,thm:PL_unit_circle}. 
For instance, the following corollary shows that such a $p$-adic valuation of the sub-sequence $\{ \Delta_{p^n}(f) \}_{n=0}^{+\infty}$ associated to a polynomial $f \in \mathbb{Z}[t]$ that does not vanish at roots of unity exhibits a behavior similar to the $p$-adic valuation of the class number in $\mathbb{Z}_{p}$-extensions of number fields, which was already studied in the seminal work of Iwasawa \cite{Iwasawa:1959}.
\begin{comment}
The $p$-adic valuation of various subsequences of a Pierce-Lehmer sequence can be understood from \cref{thm:PL_generic,thm:PL_unit_circle}. 
For instance, the following corollary shows that such a $p$-adic valuation exhibits a behavior similar to the $p$-adic valuation of the class number in $\mathbb{Z}_{p}$-extensions of number fields.
\end{comment}

\begin{comment}
\begin{corollary} \label{cor:PL_Iwasawa}
Let $f(t) \in \mathbb{Z}[t] \setminus \{0\}$ be a polynomial which does not vanish at roots of unity. 
Then, for each rational prime $p$ there exist two constants $k_0 \in \mathbb{N}$ and $c \in \mathbb{Z}$, depending on $p$ and $f$, such that the following equality 
\[ {\rm ord}_{p}(\Delta_{p^{k}}(f)) = \mu_{p} \cdot  p^{k} + \lambda_{p} \cdot k + c \]
holds true for every $k \geq k_0(p)$, where 
$\mu_{p} := -m_{p}(f)/\log(p)$ and
\[\lambda_{p} := \# \{\alpha_{i} \, | \, {\rm ord}_{p}(\alpha_{i}) = 0 \text{ and } {\rm ord}_{p}(\alpha_{i} - 1) > 0 \}.\]
\end{corollary}
\begin{proof}
From \cref{thm:PL_generic}, we have
$${\rm ord}_{p}(\Delta_{n}(f)) = -\frac{m_{p}(f)}{\log p} \cdot n + \sum_{\substack{i=1 \\ |\alpha_{i}|_{p} = 1}}^{d} {\rm ord}_{p}(\alpha_{i}^{n} - 1). $$
Let $\alpha_{i}\in \overline{\mathbb{Q}}_{p}$ be a root satisfying $|\alpha_{i}|_{p}=1$.  Since $(N(\alpha_{i}),p)=1$, one has 
$${\rm ord}_{p}(\alpha_{i}^{p^{k}} - 1) = 0$$
unless $N(\alpha_{i}) = 1$ by \cref{thm:PL_unit_circle}. To conclude, it is sufficient to observe that $N(\alpha_{i}) = 1$ precisely when ${\rm ord}_{p}(\alpha_{i} - 1)> 0$.
\end{proof}
\end{comment}

\begin{corollary} \label{cor:PL_Iwasawa}
    Let $f(t) \in \mathbb{Z}[t]$ be a polynomial which does not vanish at roots of unity, $p$ be a rational prime and $\beta_1,\dots,\beta_d$ denote the $p$-adic roots of $f$, counted with multiplicity.
    Then, there exist two constants $k_0(f,p) \in \mathbb{N}$ and $\nu_p(f) \in \mathbb{Z}$, depending on $p$ and $f$, such that the following equality 
    \[ {\rm ord}_{p}(\Delta_{p^{k}}(f)) = \mu_{p}(f) \cdot  p^{k} + \lambda_{p}(f) \cdot k + \nu_p(f) \]
    holds true for every $k \geq k_0(f,p)$, where 
    $\mu_{p}(f) := -m_{p}(f)/\log(p)$ and
    \[\lambda_{p}(f) := \# \{j \in \{ 1,\dots,d \} \colon \lvert \beta_j \rvert_p = 1, \ \lvert \beta_j - 1 \rvert_p < 1 \}.\]
    \end{corollary}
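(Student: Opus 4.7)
My plan is to deduce this corollary directly from \cref{cor:PL_p_adic_valuation}, by specializing $n = p^k$ and analyzing how each of the three terms $\mu_{p}(f) \cdot n$, $\lambda_{p,n}(f) \cdot \ord_{p}(n)$, and $\nu_{p,n}(f)$ behave under this specialization. The term $\mu_{p}(f) \cdot p^{k}$ is immediate, and $\ord_{p}(p^{k}) = k$, so what remains is to verify that $\lambda_{p,p^{k}}(f)$ is independent of $k$ and equals $\lambda_{p}(f)$, and that $\nu_{p,p^{k}}(f)$ becomes constant once $k$ is large enough.

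For the first of these, recall from \eqref{eq:Bpn} that $B_{p,p^{k}}(f)$ consists of the $p$-adic roots $\beta$ of $f$ satisfying $|\beta|_{p} = 1$ and $|\beta^{p^{k}} - 1|_{p} < 1$. For such a root, \cref{thm:PL_unit_circle} shows that $|\beta^{p^{k}} - 1|_{p} = 1$ unless the residual order $N(\beta)$ divides $p^{k}$. Since $N(\beta)$ is coprime to $p$, this forces $N(\beta) = 1$, i.e.\ $\beta \equiv 1 \pmod{\mathfrak{m}}$, which is the same as $|\beta - 1|_{p} < 1$. Conversely, if $|\beta - 1|_{p} < 1$ then $\tau_{p}(\pi_{p}(\beta)) = 1$ and the argument in the proof of \cref{thm:PL_unit_circle} gives $|\beta^{p^{k}} - 1|_{p} < 1$ for all $k \geq 1$. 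Hence $B_{p,p^{k}}(f)$ is in fact independent of $k$, and counting with multiplicity via \eqref{eq:lambdapn} yields $\lambda_{p,p^{k}}(f) = \lambda_{p}(f)$.

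For the term $\nu_{p,p^{k}}(f)$, note that for every root $\beta_{j}$ contributing to the sum we have $\tau_{p}(\pi_{p}(\beta_{j})) = 1$ by the analysis just carried out, and the integer $s_{p}(\beta_{j})$ defined in \eqref{eq:sp_beta} depends only on $\beta_{j}$ and $p$, not on $k$. Setting
\[
k_{0}(f,p) := \max\bigl\{ s_{p}(\beta_{j}) : j \in \{1,\ldots,d\}, \ \beta_{j} \in B_{p,p^{k}}(f) \bigr\},
\]
we get $r_{p,p^{k}}(\beta_{j}) = \min(k, s_{p}(\beta_{j})) = s_{p}(\beta_{j})$ whenever $k \geq k_{0}(f,p)$, so the defining sum for $\nu_{p,p^{k}}(f)$ stabilizes to the constant value
\[
\nu_{p}(f) := \sum_{\beta_{j} \in B_{p,p^{k}}(f)} \Bigl( \ord_{p}\bigl(\beta_{j}^{p^{s_{p}(\beta_{j})}} - 1\bigr) - s_{p}(\beta_{j}) \Bigr),
\]
which lies in $\mathbb{Z}$ because $s_{p}(\beta_{j})$ was chosen precisely so that $\ord_{p}(\beta_{j}^{p^{s_{p}(\beta_{j})}} - 1) \geq s_{p}(\beta_{j})$ is rational with appropriate denominator cleared by the $p^{s_{p}(\beta_{j})}$-th power (and the total $\ord_{p}(\Delta_{p^{k}}(f))$ is an integer).

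There is no genuinely hard step here; the only point that requires care is the bookkeeping which shows that the set $B_{p,p^{k}}(f)$ stabilizes at $k = 1$ (so that the $\lambda_{p}$-coefficient really is $k$-independent) while $\nu_{p,p^{k}}(f)$ only stabilizes after $k \geq k_{0}(f,p)$. Assembling the three pieces yields the claimed identity for all $k \geq k_{0}(f,p)$.
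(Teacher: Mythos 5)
Your proposal is correct and follows essentially the same route as the paper: both specialize \cref{cor:PL_p_adic_valuation} at $n = p^k$, use the coprimality of the residual order $N(\beta)$ to $p$ to show that $B_{p,p^k}(f)$ is independent of $k$ (so $\lambda_{p,p^k}(f) = \lambda_p(f)$), and take $k_0(f,p)$ to be the maximum of the $s_p(\beta_j)$ so that $r_{p,p^k}(\beta_j)$ stabilizes and $\nu_{p,p^k}(f)$ becomes constant. The only cosmetic differences are that you restrict the maximum defining $k_0(f,p)$ to roots in $B_{p,p^k}(f)$ rather than all roots on the unit circle, and you simplify $\tau_p(\pi_p(\beta_j))$ to $1$ in the formula for $\nu_p(f)$, both of which are harmless.
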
 
\begin{proof}
This follows directly from \cref{cor:PL_p_adic_valuation}. 
Indeed, the invariant $\mu_p(f)$ coincides with the one introduced in \eqref{eq:mu_p}. 
Moreover, let us note that $B_{p,p^k}(f) = B_{p,1}(f)$ for every $k \geq 1$. To see this, fix any $\beta \in \overline{\mathbb{Q}}_p$ such that $\lvert \beta \rvert_p = 1$. Then, we have that $\lvert \beta^{p^k} - 1 \rvert_p < 1$ if and only if the multiplicative order of $\pi_p(\beta) \in \overline{\mathbb{F}}_p^\times$ is a multiple of $p^k$. However, the aforementioned multiplicative order is coprime to $p$. Therefore, $\lvert \beta^{p^k} - 1 \rvert_p < 1$ if and only if $\pi_p(\beta) = 1$, which is equivalent to say that $\lvert \beta - 1 \rvert_p < 1$. Hence, we see immediately from the definition of the sets $B_{p,n}(f)$ given in \eqref{eq:Bpn} that $B_{p,p^k}(f) = B_{p,1}(f)$ for every $k \geq 1$, as we wanted to show.
This shows in particular that $\lambda_{p,p^k}(f) = \lambda_p(f)$ for every $k \geq 1$, where $\lambda_{p,p^k}(f)$ is the invariant defined in \eqref{eq:lambdapn}.

To conclude, it suffices to define
\[k_0(f,p) := \max\{ s_p(\beta) \colon \beta \in \overline{\mathbb{Q}}_p, \ \ f(\beta) = 0, \ \lvert \beta \rvert_p = 1 \},\]
where $s_p(\beta)$ is the invariant defined in \eqref{eq:sp_beta}.
Then, for every $k \geq k_0(f,p)$ and every $\beta \in \overline{\mathbb{Q}}_p$ such that $\lvert \beta \rvert_p = 1$ and $f(\beta) = 0$ we have that $r_{p,p^k}(\beta) = s_p(\beta)$, as follows immediately from the fact that $r_{p,p^k}(\beta) = \min(k,s_p(\beta))$. 
Therefore, using the definition of $\nu_{p,n}(f)$ given in \cref{cor:PL_p_adic_valuation}, we see that for every integer $k \geq k_0(f,p)$ the invariant
\[
\begin{aligned}
\nu_{p,p^k}(f) &= \sum_{\substack{j \in \{1,\dots,d\} \\ \beta_j \in B_{p,p^k}(f)}} \left( \ord_p(\beta_j^{p^{r_{p,p^k}(\beta_j)}} - \tau_p(\pi_p(\beta_j))^{p^{r_{p,p^k}(\beta_j)}}) - r_{p,p^k}(\beta_j) \right) \\ &= \sum_{\substack{j \in \{1,\dots,d\} \\ \beta_j \in B_{p,1}(f)}} \left( \ord_p(\beta_j^{p^{s_p(\beta_j)}} - \tau_p(\pi_p(\beta_j))^{p^{s_p(\beta_j)}}) - s_p(\beta_j) \right)
\end{aligned}
\]
is independent of $k$.
This allows us to set
\[
\nu_p(f) := \sum_{\substack{j \in \{1,\dots,d\} \\ \beta_j \in B_{p,1}(f)}} \left( \ord_p(\beta_j^{p^{s_p(\beta_j)}} - \tau_p(\pi_p(\beta_j))^{p^{s_p(\beta_j)}}) - s_p(\beta_j) \right),
\]
and to conclude our proof.

\end{proof}

On the other hand, if $\ell, p \in \mathbb{N}$ are two distinct rational primes, the $\ell$-adic valuation of the sub-sequence $\{ \Delta_{p^n}(P) \}_{n=0}^{+\infty}$ exhibits a behavior which is similar to the one observed by Washington \cite{Washington:1978} in the case of $\mathbb{Z}_p$-towers of number fields.

\begin{comment}
\begin{corollary} \label{cor:PL_Washington}
Let $f(t) \in \mathbb{Z}[t] \setminus \{0\}$ be a polynomial which does not vanish at roots of unity. 
Let $p$ and $\ell$ be two distinct rational primes. 
Then, there exist two constants $k_0 \in \mathbb{N}$ and $c \in \mathbb{Z}$, depending on $p$, $\ell$ and $f$, such that the equality
$${\rm ord}_{p}(\Delta_{\ell^{k}}(f)) = \mu_{p} \cdot  \ell^{k} +  c, $$
holds true for every $k \geq k_0$, where again
$\mu_{p} = -m_{p}(f)/\log(p)$.
\end{corollary}
\end{comment}
\begin{corollary} \label{cor:PL_Washington}
    Let $f(t) \in \mathbb{Z}[t]$ be a polynomial which does not vanish at roots of unity. 
    Let $p$ and $\ell$ be two distinct rational primes. 
    Then, there exist two constants $k_0(f,p,\ell) \in \mathbb{N}$ and $\nu_p(f,\ell) \in \mathbb{Z}$, depending on $p$, $\ell$ and $f$, such that the equality
    \[{\rm ord}_{p}(\Delta_{\ell^{k}}(f)) = \mu_{p}(f) \cdot \ell^{k} +  \nu_p(f,\ell) \]
    holds true for every $k \geq k_0(f,p,\ell)$, where again
    $\mu_{p}(f) = -m_{p}(f)/\log(p)$.
    \end{corollary}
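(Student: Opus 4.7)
The plan is to deduce this corollary directly from \cref{cor:PL_p_adic_valuation} by specializing $n=\ell^k$ and observing that two of the three terms appearing in \eqref{eq:PL_p-adic_valuation} stabilize as $k$ grows. Since $\ell\neq p$, we have $\ord_p(\ell^k)=0$, so the ``$\lambda$''-term in \eqref{eq:PL_p-adic_valuation} disappears regardless of the value of $\lambda_{p,\ell^k}(f)$, and we are reduced to
\[
\ord_p(\Delta_{\ell^k}(f)) \;=\; \mu_p(f)\cdot\ell^k \;+\; \nu_{p,\ell^k}(f),
\]
so everything boils down to showing that $\nu_{p,\ell^k}(f)$ is eventually independent of $k$.

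The key step is to analyze the set $B_{p,\ell^k}(f)$ defined in \eqref{eq:Bpn}. For a $p$-adic root $\beta$ of $f$ with $\lvert\beta\rvert_p=1$, the condition $\lvert\beta^{\ell^k}-1\rvert_p<1$ is equivalent to $\pi_p(\beta)^{\ell^k}=1$ in $\overline{\FF}_p^{\times}$, which means that the multiplicative order $N(\beta)$ of $\pi_p(\beta)$, which is automatically coprime to $p$, divides $\ell^k$; in other words, $N(\beta)$ must be a power of $\ell$. Hence $\beta$ lies in $B_{p,\ell^k}(f)$ for some (and then all sufficiently large) $k$ precisely when $N(\beta)$ is a power of $\ell$. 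Because $f$ has only finitely many roots, the ascending chain $B_{p,\ell}(f)\subseteq B_{p,\ell^2}(f)\subseteq\dots$ stabilizes: setting
\[
B_p^\ell(f) := \{\beta\in\overline{\QQ}_p : f(\beta)=0,\ \lvert\beta\rvert_p=1,\ N(\beta)\text{ is a power of }\ell\},
\]
there is a smallest $k_1$ with $B_{p,\ell^k}(f)=B_p^\ell(f)$ for all $k\geq k_1$.

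Next I would compute the summands in $\nu_{p,\ell^k}(f)$. For every $\beta$ appearing in the sum one has $r_{p,\ell^k}(\beta)=\min(\ord_p(\ell^k),s_p(\beta))=\min(0,s_p(\beta))=0$, so the term corresponding to $\beta$ reduces simply to $\ord_p(\beta-\tau_p(\pi_p(\beta)))$, a quantity that depends only on $\beta$. Combining this with the stabilization of $B_{p,\ell^k}(f)$ yields, for $k\geq k_0(f,p,\ell):=k_1$, the identity
\[
\nu_{p,\ell^k}(f)\;=\;\sum_{\beta\in B_p^\ell(f)}\ord_p\!\bigl(\beta-\tau_p(\pi_p(\beta))\bigr),
\]
and this common value is the sought-for integer $\nu_p(f,\ell)$. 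Substituting this into the specialized formula above finishes the argument.

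There is no real obstacle here beyond bookkeeping: the substantive work (namely \cref{thm:PL_generic} and the Teichmüller-based analysis in \cref{thm:PL_unit_circle}, already packaged in \cref{cor:PL_p_adic_valuation}) has been done, and the only point to verify carefully is that reductions of roots whose $\ell$-free part is nontrivial never enter $B_{p,\ell^k}(f)$, which is immediate from $\gcd(N(\beta),p)=1$ together with the divisibility $N(\beta)\mid\ell^k$.
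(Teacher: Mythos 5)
Your proof follows essentially the same route as the paper's: specialize \cref{cor:PL_p_adic_valuation} to $n=\ell^k$, note that $\ord_p(\ell^k)=0$ kills the $\lambda$-term and forces $r_{p,\ell^k}(\beta)=0$, and observe that $B_{p,\ell^k}(f)$ stabilizes once $k$ exceeds the largest $\ell$-adic valuation of the multiplicative orders $N(\beta)$. The only bookkeeping nit is that your final formula for $\nu_p(f,\ell)$ sums over the \emph{set} $B_p^\ell(f)$, whereas the paper's $\nu_{p,n}(f)$ is a sum over root indices and hence counts repeated roots with multiplicity; this does not affect the constancy-in-$k$ conclusion.
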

    \begin{proof}
    This follows once again from \cref{cor:PL_p_adic_valuation}. Indeed, $\mu_p(f)$ is once again identical to the invariant defined in \eqref{eq:mu_p}. Moreover, $\mathrm{ord}_p(\ell^k) = 0$ for every $k \geq 0$, which shows that the part of the equality \eqref{eq:PL_p-adic_valuation} involving the invariant $\lambda_{p,n}(f)$ does not appear when $n = \ell^k$. 
    
    To conclude, let $\beta_1,\dots,\beta_d \in \overline{\mathbb{Q}}_p$ be the $p$-adic roots of $f(x)$, counted with multiplicity, that lie on the $p$-adic unit circle, and $N_1,\dots,N_d$ be the multiplicative orders of their reductions 
    \[\pi_p(\beta_1),\dots,\pi_p(\beta_d) \in \overline{\mathbb{F}}_p^\times.\]
    Then, for every $j \in \{1,\dots,d\}$ we have that $\lvert \beta_j^{\ell^k} - 1 \rvert_p < 1$ if and only if there exists some non-negative integer $a_j \le k$ such that $N_j = \ell^{a_j}$. Moreover, we have that \[r_{p,\ell^k}(\beta_j) = \min(\mathrm{ord}_p(\ell^k),s_p(\beta_j)) = 0\] for every $j \in \{1,\dots,d\}$ and every $k \geq 0$.
    Therefore, if we set 
    \[
    \begin{aligned}
        k_0(f,p,\ell) &:= \max\{\mathrm{ord}_\ell(N_j) \colon j \in \{ 1,\dots,d \} \} \\
        \nu_p(f,\ell) &:= \sum_{\substack{ j \in \{1,\dots,d\} \\ \beta_j \in B_{p,\ell^{k_0}}(f)}} \left( \mathrm{ord}_p(\beta_j - \tau_p(\pi_p(\beta_j))) \right),
    \end{aligned}
    \]
    where we write $k_0$ instead of $k_0(f,p,\ell)$.
    Then, we see from the definition of $\nu_{p,n}(f)$ given in \cref{cor:PL_p_adic_valuation} that $\nu_{p,\ell^k}(f) = \nu_p(f,\ell)$ for every $k \geq k_0(f,p,\ell)$, and this allows us to conclude.
    \end{proof}

In fact, \cref{cor:PL_Iwasawa,cor:PL_Washington} can be generalized by looking at sequences of integers which are divisible only by a finite number of primes, as done by Friedman \cite{Friedman:1982} for cyclotomic extensions of number fields which are abelian over $\mathbb{Q}$.

\begin{corollary}
    \label{cor:PL_Friedman}
    Let $f(t) \in \mathbb{Z}[t]$ be a polynomial which does not vanish at roots of unity. Let $\ell_1,\dots,\ell_r$ be distinct prime numbers, and let $\mathcal{S} \subseteq \mathbb{N}$ be the set of those integers whose prime divisors are contained in $\{\ell_1,\dots,\ell_r\}$. Then:
    \begin{itemize}
        \item for every $j \in \{1,\dots,r\}$ there exist a non-negative integer $\lambda_j(f)$ and an integer $\nu_j(f)$ such that for every $n = \ell_1^{k_1} \cdots \ell_j^{k_j} \cdots \ell_r^{k_r} \in \mathcal{S}$ we have that
        \[
            \mathrm{ord}_{\ell_j}(\kappa(X_n)) = \mu_{\ell_j}(f) \cdot n + \lambda_j(f) \cdot k_j + \nu_j(f)
        \]
        as long as $n$ is big enough, where again $\mu_{\ell_j}(f) = -m_{\ell_j}(f)/\log(\ell_j)$.
        \item for every prime $p \not\in \{\ell_1,\dots,\ell_r\}$, there exists an integer $\nu(f)$ such that
        \[
            \mathrm{ord}_p(\kappa(X_n)) = \mu_p(f) \cdot n + \nu(f),
        \]
        for every $n \in \mathcal{S}$ which is big enough.
    \end{itemize}
\end{corollary}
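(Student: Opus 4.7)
The plan is to deduce both statements from \cref{cor:PL_p_adic_valuation} applied at the relevant prime. That corollary expresses $\ord_p(\Delta_n(f))$ as $\mu_p(f)\cdot n + \lambda_{p,n}(f)\cdot \ord_p(n) + \nu_{p,n}(f)$, and the main task is to show that the last two pieces stabilize as $n$ ranges through $\mathcal{S}$ and becomes sufficiently large. The key observation is that both $\lambda_{p,n}(f)$ and $\nu_{p,n}(f)$ depend on $n$ only through (i) which $p$-adic unit roots $\beta$ of $f$ satisfy $N(\beta) \mid n$, where $N(\beta)$ denotes the multiplicative order of $\pi_p(\beta) \in \overline{\mathbb{F}}_p^\times$, and (ii) the integer $r_{p,n}(\beta) = \min(\ord_p(n), s_p(\beta))$.

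For the first bullet, fix $j$ and set $p = \ell_j$. For any root $\beta$ of $f$ lying in $\overline{\mathbb{Q}}_{\ell_j}$ with $|\beta|_{\ell_j} = 1$, the order $N(\beta)$ is coprime to $\ell_j$. Hence, for $n = \ell_1^{k_1} \cdots \ell_r^{k_r} \in \mathcal{S}$, the condition $N(\beta) \mid n$ amounts to the prime divisors of $N(\beta)$ lying in $\{\ell_i : i \neq j\}$ together with $\ord_{\ell_i}(N(\beta)) \leq k_i$ for each such $i$. Once all the exponents $k_i$ with $i \neq j$ are large enough, these inequalities hold automatically, so the set of contributing roots stabilizes and $\lambda_{\ell_j,n}(f)$ becomes a constant $\lambda_j(f)$. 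Moreover, since $\ord_{\ell_j}(n) = k_j$, taking $k_j \geq \max_\beta s_{\ell_j}(\beta)$ over the (finitely many) contributing roots forces $r_{\ell_j,n}(\beta) = s_{\ell_j}(\beta)$ for all of them, so $\nu_{\ell_j,n}(f)$ stabilizes to an integer $\nu_j(f)$. Substituting into \eqref{eq:PL_p-adic_valuation} and using $\ord_{\ell_j}(n) = k_j$ gives the first equation.

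For the second bullet, let $p \notin \{\ell_1, \ldots, \ell_r\}$. Then $\ord_p(n) = 0$ for every $n \in \mathcal{S}$, so the middle term of \eqref{eq:PL_p-adic_valuation} vanishes identically and $r_{p,n}(\beta) = 0$ for every relevant $\beta$. As in the previous case, the set of $p$-adic unit roots $\beta$ of $f$ satisfying $N(\beta) \mid n$ stabilizes once $n \in \mathcal{S}$ is sufficiently large, since this requires $N(\beta)$ to have prime divisors only in $\{\ell_1,\dots,\ell_r\}$ with exponents bounded by the $k_i$. Plugging $r_{p,n}(\beta) = 0$ into the defining expression of $\nu_{p,n}(f)$ collapses it to $\sum_\beta \ord_p(\beta - \tau_p(\pi_p(\beta)))$ summed over this eventually constant set, which is an integer $\nu(f)$ independent of $n$, as required.

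The only genuine obstacle is the uniform bookkeeping: one must select a threshold in $\mathcal{S}$ beyond which all the conditions ``$k_i$ large enough for $i \neq j$'' and (in the first bullet) ``$k_j \geq s_{\ell_j}(\beta)$ for every contributing $\beta$'' hold simultaneously. The existence of such a threshold is immediate from the finiteness of the root set of $f$ and the multiplicative structure of $\mathcal{S}$, so no input beyond \cref{cor:PL_p_adic_valuation} is required.
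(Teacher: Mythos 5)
Your proposal is correct and follows exactly the route the paper intends: the paper's own ``proof'' of \cref{cor:PL_Friedman} is a one-line pointer to the arguments for \cref{cor:PL_Iwasawa,cor:PL_Washington}, and what you have written is precisely the fleshed-out version of that argument via \cref{cor:PL_p_adic_valuation}, tracking when $N(\beta)\mid n$ and when $r_{p,n}(\beta)$ saturates at $s_p(\beta)$. The one point you gloss over in the ``uniform bookkeeping'' paragraph is what ``$n$ big enough'' must mean: since $\mathcal{S}$ contains arbitrarily large $n$ with, say, $k_i=0$ for some $i$, the set of roots $\beta$ with $N(\beta)\mid n$ does \emph{not} stabilize as the magnitude of $n$ grows, so no threshold on $\lvert n\rvert$ can work; the statement (and your proof) is correct only when ``big enough'' is read in the directed sense that all the relevant exponents $k_i$ are large, which is the reading consistent with Friedman's theorem on composites of $\mathbb{Z}_{\ell_i}$-extensions. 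With that interpretation made explicit, your argument is complete.
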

\begin{proof}
    The proof is similar to the proofs of the two previous corollaries, and we leave it to the reader.
\end{proof}

\begin{remark}
    In the situation where the polynomial $f(t)$ is monic, the $p$-adic valuation of a Pierce-Lehmer sequence was also studied in \cite{Ji/Qin:2015}.  In this situation, there is no $p$-adic Mahler measure appearing in the formulas.
\end{remark}

\begin{remark}
    Note that the Pierce-Lehmer sequence $\{ \Delta_n(f) \}_{n \in \mathbb{N}}$ associated to any polynomial $f \in \mathbb{Z}[t]$ satisfies a linear recurrence, as explained in \cite[Section~8]{Lehmer_1933}.
    Therefore, studying the $p$-adic valuation of Pierce-Lehmer sequences can be seen as a special case of the more general problem of studying the $p$-adic valuation of linearly recurrent sequences, which has been the subject of great attention (see for instance \cite{Bilu_Luca_Nieuwveld_Ouaknine_Worrell_2023}).
    We also refer the interested reader to the works \cite{EEW:2000,Flatters_2009}, which treat problems related to the $p$-adic valuation of Pierce-Lehmer sequences.
\end{remark}

%\section{Graph theory}
\section{Graph theory} \label{GT}

The aim of this section is to prove \cref{p_adic_val}, which provides an explicit expression for the $p$-adic valuation of the number of spanning trees in a $\mathbb{Z}$-tower of graphs in terms of a polynomial naturally associated to this tower, which we call the \emph{Ihara polynomial} and which we define in \cref{growth}.
In particular, \cref{thm:main_thm_intro} is a simplified version of \cref{p_adic_val}, as we explain in \cref{sec:exact_formulas}.
To do so, we first recall some fundamentals about graphs and their covers in \cref{loc_fin_graph,number}.
Then, we devote \cref{growth} to the proof of \cref{thm:kN_Pierce_Lehmer}, which provides an explicit formula relating the number of spanning trees of the members of a $\mathbb{Z}$-cover of graphs to the Pierce-Lehmer sequence associated to the Ihara polynomial of this tower.
We provide an explicit example which verifies this relation in 
\cref{sec:explicit_example_bouquet}.
Moreover, \cref{sec:asymptotics} shows how to combine \cref{thm:kN_Pierce_Lehmer} with the results proven in \cref{sec:Pierce_Lehmer}, to provide some asymptotic expressions for the growth of the number of spanning trees in a $\mathbb{Z}$-tower.
In particular, this generalizes two previous results of A.D. Mednykh and I.A. Mednykh \cite{Mednykh:2018,Mednykh:2019}.

%\subsection{Locally finite graphs}
\subsection{Galois covers of locally finite graphs} \label{loc_fin_graph}
The aim of this sub-section is to formally introduce the kind of graphs which are considered in this article, and their Galois theory.

\subsubsection*{Locally finite graphs} Let $X = (V_{X},\mathbf{E}_{X})$ be a graph in the sense of Serre (see \cite{Serre:1977} and also \cite{Sunada:2013}), where $V_X$ and $\mathbf{E}_X$ are two sets, to be interpreted as the sets of vertices and edges of $X$. In particular, each edge $e \in \mathbf{E}_X$ has an origin $o(e) \in V_X$ and a terminus $t(e) \in V_X$, giving rise to the incidence map 
\[
	\begin{aligned}
		\mathrm{inc} \colon \mathbf{E}_{X} &\to V_{X} \times V_{X} \\
		e &\mapsto (o(e),t(e)),
	\end{aligned}
\]
and to the inversion map $\mathbf{E}_{X} \rightarrow \mathbf{E}_{X}$, denoted by $e \mapsto \bar{e}$, such that \[(o(\overline{e}),t(\overline{e})) = (t(e),o(e))\] and $\overline{\overline{e}} = e \neq \overline{e}$ for every $e \in \mathbf{E}_X$. 

A graph $X$ is called finite if both $V_{X}$ and $\mathbf{E}_{X}$ are finite sets. Moreover, a graph $X$ is called locally finite if for each vertex $v \in V_{X}$, the set of edges with origin at $v$, defined as
\[
\mathbf{E}_{X,v} = \{ e \in \mathbf{E}_{X} \, | \, o(e) = v\}
\]
is finite. In this case, one defines the valency (or degree) of a vertex $v \in V_{X}$ to be
$${\rm val}_{X}(v) = |\mathbf{E}_{X,v}|. $$
Any finite graph is in particular locally finite. 

\begin{assumption}
In this paper, all graphs will be locally finite.    
\end{assumption}

\subsubsection*{Paths and loops}
Let us recall that a path $c = e_{1} \cdot \ldots \cdot e_{m}$ in a graph $X$ consists of a sequence of directed edges $e_{i} \in \mathbf{E}_{X}$ such that $t(e_{i}) = o(e_{i+1})$ for each index $i \in \{1,\ldots,m-1\}$. 
The origin and the terminus of the path $c = e_1 \cdot \ldots \cdot e_m$ are defined as $o(c) = o(e_{1})$ and $t(c) = t(e_{m})$ respectively. 

A graph $X$ is called connected if given any two vertices $v_{1}, v_{2} \in V_{X}$, there exists a path $c$ in $X$ such that $o(c) = v_{1}$ and $t(c) = v_{2}$.
Finally, a loop based at a vertex $v_0 \in X$ is a path $c$ in $X$ such that $o(c) = t(c) = v_0$. 

This allows one to define the fundamental group of $X$ based at a vertex $v_0 \in V_X$, which is denoted by $\pi_1(X,v_0)$, as the set of loops based at $v_0$, considered modulo homotopy (see \cite[Section~3.5]{Sunada:2013} for the precise definition of this equivalence relation in the context of graphs), endowed with the group operation given by the concatenation of paths (see \cite[Section~5.3]{Sunada:2013} for details).

\subsubsection*{Galois covers of graphs}
Let $Y$ and $X$ be two graphs.  A morphism of graphs $f:Y \rightarrow X$ is called a cover (or a covering map) if the following two conditions are satisfied:
\begin{enumerate}
\item $f:V_{Y} \rightarrow V_{X}$ is surjective,
\item for all $w \in V_{Y}$, the restriction $f|_{\mathbf{E}_{Y,w}}$ induces a bijection
$$f|_{\mathbf{E}_{Y,w}}:\mathbf{E}_{Y,w} \stackrel{\approx}{\rightarrow} \mathbf{E}_{X,f(w)}. $$
\end{enumerate}
We will often refer to $Y/X$ as a cover if the covering map is understood from the context.  Given a cover $f:Y \rightarrow X$, one defines as usual ${\rm Aut}_{f}(Y/X)$ to be the subgroup of ${\rm Aut}(Y)$ consisting of the automorphisms $\iota \in {\rm Aut}(Y)$ satisfying $f \circ \iota = f$.  Again, we will often write ${\rm Aut}(Y/X)$ instead of ${\rm Aut}_{f}(Y/X)$ if $f$ is understood.  

Let us also recall that a cover $f:Y \rightarrow X$ is called Galois if the following two conditions are satisfied:
\begin{enumerate}
\item the graph $Y$ is connected (and hence also $X$),
\item the group ${\rm Aut}(Y/X)$ acts transitively on the fiber $f^{-1}(v)$ for all $v \in V_{X}$.
\end{enumerate}
If $Y/X$ is a Galois cover, we write ${\rm Gal}(Y/X)$ instead of ${\rm Aut}(Y/X)$.  In this case, one has the usual Galois correspondence between subgroups of ${\rm Aut}(Y/X)$ and equivalence classes of intermediate covers of $Y/X$.  

\subsubsection*{Voltage assignments}
Let $X$ be a graph and let $G$ be a group.  A voltage assignment on $X$ with values in $G$ is defined to be a function $\alpha:\mathbf{E}_{X} \rightarrow G$ satisfying
\begin{equation} \label{pr}
\alpha(\bar{e}) = \alpha(e)^{-1}
\end{equation}
for every $e \in \mathbf{E}_{X}$. 
Each such voltage assignment can be defined starting from an orientation of $X$, which is a subset $S \subseteq \mathbf{E}_X$ such that for each edge $e \in \mathbf{E}_X$, either $e$ or $\overline{e}$ belong to $S$, but not both.
Then, to get a voltage assignment as above, it suffices to define it on any orientation $S$ and set $\alpha(\overline{s}) := \alpha(s)^{-1}$ for every $s \in S$.

\subsubsection*{Covers from voltage assignments}
Given a graph $X$, a group $G$ and a voltage assignment
\[\alpha \colon \mathbf{E}_X \to G,\] 
one can construct a new graph $X(G,\alpha)$ as follows:
\begin{itemize}
    \item the vertices of $X(G,\alpha)$ are given by $V_X \times G$;
    \item the edges of $X(G,\alpha)$ are given by $\mathbf{E}_X \times G$;
    \item the origin, terminus and inverse maps are defined as:
\[
\begin{aligned}
  o(e,\sigma) &= (o(e),\sigma) \\ 
  t(e,\sigma) &= (t(e),\sigma \cdot \alpha(e)) \\
  \overline{(e,\sigma)} &= (\bar{e},\sigma \cdot \alpha(e))
\end{aligned}
\]
for each edge $(e,\sigma) \in \mathbf{E}_X \times G$.
\end{itemize}
It is easy to see that if $X$ is locally finite than so is $X(G,\alpha)$, and that the map of graphs 
\[p \colon X(G,\alpha) \rightarrow X\] 
defined as $p(v,\sigma) := v$ on each vertex $(v,\sigma) \in V_X \times G$, and as $p(e,\sigma) := e$ on each edge $(e,\sigma) \in \mathbf{E}_X \times G$, is actually a covering map. Moreover, this covering map is Galois whenever $X(G,\alpha)$ is connected, and in this case $\operatorname{Gal}(X(G,\alpha)/X) \cong G$ canonically.

To conclude, let us observe that the construction of $X(G,\alpha)$ is functorial with respect to $\alpha$. More precisely, for each morphism of groups $f \colon G \to H$ one gets a new voltage assignment $\beta := f \circ \alpha$ with values in $H$, and a natural map of graphs 
\begin{equation} \label{eq:functorial_Galois_cover}
    f_\ast \colon X(G,\alpha) \to X(H,\beta),
\end{equation} 
which is defined on each vertex $(v,\sigma) \in V_X \times G$ as $f_{\ast}(v,\sigma) := (v,f(\sigma))$ , and on each edge $(e,\sigma) \in \mathbf{E}_X \times G$ as $f_\ast(e,\sigma) := (e,f(\sigma))$. 
Finally, it is easy to see that this morphism $f_\ast$ is a covering map whenever $f$ is surjective.

\subsubsection*{Monodromy representations}
Let $X$ be a graph and $\alpha \colon \mathbf{E}_X \to G$ be a voltage assignment with values in a group $G$. Then, the monodromy representation attached to $\alpha$ at a vertex $v_0 \in V_X$ is given by the following map
\begin{equation} \label{pi_gm}
\begin{aligned}
    \rho_{\alpha,v_0} \colon \pi_{1}(X,v_{0}) &\to G \\
    [e_1 \cdot \ldots \cdot e_n] &\mapsto \alpha(e_1) \cdot \ldots \cdot \alpha(e_n)
\end{aligned}
\end{equation}
which is easily seen to be a well-defined morphism of groups.
Moreover, this map can be used to detect when the graph $X(G,\alpha)$ is connected, and thus when the natural covering map \[p \colon X(G,\alpha) \to X\] is Galois, as we recall in the following theorem, which is proven in \cite[Section~2.3.1]{Ray/Vallieres:2022}.

\begin{theorem} \label{connectedness}
Let $X$ be a connected graph, and $\alpha \colon \mathbf{E}_X \to G$ be a voltage assignment. Then, the graph $X(G,\alpha)$ is connected if and only if the monodromy representation $\rho_{\alpha,v_0}$ attached to $\alpha$ at some (equivalently, any) vertex $v_0 \in V_X$ is surjective.
\end{theorem}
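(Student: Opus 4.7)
The plan is to combine the natural free $G$-action on the fibers of the covering map $p \colon X(G,\alpha) \to X$ with the unique lifting of paths in $X$, in order to reduce the question of connectedness of $X(G,\alpha)$ to the computation of the image of $\rho_{\alpha,v_0}$. First I would define the left $G$-action on $X(G,\alpha)$ by $h \cdot (v,\sigma) := (v, h\sigma)$ on vertices and $h \cdot (e,\sigma) := (e, h\sigma)$ on edges, and check directly from the definitions of origin, terminus, and the bar operation that this is a graph automorphism commuting with $p$, free and transitive on every fiber $p^{-1}(v)$. Then I would establish unique path-lifting: for any path $c = e_1 \cdot \ldots \cdot e_n$ in $X$ and any $\sigma \in G$, there is a unique lift of $c$ to a path in $X(G,\alpha)$ with origin $(o(c),\sigma)$, whose terminus is $(t(c), \sigma \cdot \alpha(e_1) \cdots \alpha(e_n))$; this is immediate by induction on $n$.

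Next, fix a base vertex $v_0 \in V_X$ and let $C$ denote the connected component of $X(G,\alpha)$ containing $(v_0,1)$. Since $X$ is connected, path-lifting ensures that $C$ meets every fiber of $p$. The crucial step is to identify the fiber $C \cap p^{-1}(v_0) = \{(v_0, g) : g \in \operatorname{Im}(\rho_{\alpha,v_0})\}$. The inclusion $\supseteq$ follows by lifting a loop at $v_0$ realizing any prescribed element of the image, while the inclusion $\subseteq$ uses that any path in $X(G,\alpha)$ from $(v_0,1)$ to $(v_0,g)$ projects to a loop at $v_0$ in $X$ whose voltage, by the lifting formula, must equal $g$. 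Because the $G$-action permutes the fibers freely and transitively, it also permutes the connected components of $X(G,\alpha)$; the stabilizer of $C$ is exactly $\operatorname{Im}(\rho_{\alpha,v_0})$, so $X(G,\alpha)$ has $[G : \operatorname{Im}(\rho_{\alpha,v_0})]$ components, and is connected if and only if $\rho_{\alpha,v_0}$ is surjective.

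Finally, to verify that surjectivity is independent of the chosen basepoint, I would let $v_1 \in V_X$ and pick any path $q$ from $v_0$ to $v_1$ in $X$. Conjugation by $q$ induces an isomorphism $\pi_1(X,v_1) \xrightarrow{\sim} \pi_1(X,v_0)$ sending $[\ell]$ to $[q \cdot \ell \cdot \bar{q}]$, and a direct computation using $\alpha(\bar{q}) = \alpha(q)^{-1}$ gives $\rho_{\alpha,v_0}([q \cdot \ell \cdot \bar{q}]) = \alpha(q) \cdot \rho_{\alpha,v_1}([\ell]) \cdot \alpha(q)^{-1}$, so the two images are conjugate subgroups of $G$, and one equals $G$ exactly when the other does. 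The main subtlety in the whole argument is hidden in the very definition of $\rho_{\alpha,v_0}$: one has to know that the voltage of a loop depends only on its homotopy class in the sense of \cite{Sunada:2013}, and this is precisely where the defining relation $\alpha(\bar{e}) = \alpha(e)^{-1}$ is used to absorb inserted backtracks of the form $e \cdot \bar{e}$.
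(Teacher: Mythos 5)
Your argument is correct and complete: the free left $G$-action on $X(G,\alpha)$, unique path-lifting, the identification $C \cap p^{-1}(v_0) = \{(v_0,g) : g \in \operatorname{Im}(\rho_{\alpha,v_0})\}$, and the resulting count of $[G : \operatorname{Im}(\rho_{\alpha,v_0})]$ components together give exactly the stated equivalence, and your conjugation argument correctly handles basepoint independence. Note that the paper does not prove this theorem itself but defers to \cite[Section~2.3.1]{Ray/Vallieres:2022}; your proof is the standard path-lifting/orbit-counting argument for voltage graphs, which is essentially the same route taken in that reference, and you rightly flag that the only point requiring care is the well-definedness of $\rho_{\alpha,v_0}$ on homotopy classes via the relation $\alpha(\bar{e}) = \alpha(e)^{-1}$.
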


\begin{remark} \label{rmk:universal_cover}
Let $X$ be a connected graph, $v_0 \in V_X$ a vertex of $X$, and $\alpha \colon \mathbf{E}_X \to G$ a voltage assignment such that $\rho_{\alpha,v_0}$ is surjective.
Fix moreover a universal cover $\pi \colon \widetilde{X} \twoheadrightarrow X$, and a vertex $w_0 \in \widetilde{X}$ such that $\pi(w_0) = v_0$.
Thanks to the universal property of the universal cover, proved for example in \cite[Theorem~5.10]{Sunada:2013}, it can be shown that the intermediate Galois cover of $\widetilde{X} \to X$ given by $X(G,\alpha)$ corresponds to the subgroup $\varphi_{w_0}(\ker(\rho_{\alpha,v_0})) \trianglelefteq {\rm Gal}(\widetilde{X}/X)$, where \[\varphi_{w_{0}}: \pi_{1}(X,v_{0}) \stackrel{\sim}{\longrightarrow}{\rm Gal}(\widetilde{X}/X)\] is the usual group isomorphism.
\begin{comment}
Let $X$ be a connected graph, $v_0 \in V_X$ a vertex of $X$, and $\alpha \colon \mathbf{E}_X \to G$ a voltage assignment such that $\rho_{\alpha,v_0}$ is surjective.
Fix moreover a universal cover $\pi \colon \widetilde{X} \twoheadrightarrow X$, and a point $w_0 \in \widetilde{X}$ such that $\pi(w_0) = v_0$.
Then, it can be shown that the intermediate Galois cover of $\widetilde{X} \to X$ given by $X(G,\alpha)$ corresponds to the subgroup $\varphi_{w_0}(\ker(\rho_{\alpha,v_0})) \trianglelefteq {\rm Gal}(\widetilde{X}/X)$, where $\varphi_{w_{0}}: \pi_{1}(X,v_{0}) \stackrel{\sim}{\longrightarrow}{\rm Gal}(\widetilde{X}/X)$ is the usual group isomorphism.
\end{comment}
\end{remark}

\subsubsection*{Systems of Galois covers}
Let $X$ be a graph and $\alpha \colon \mathbf{E}_X \to G$ a voltage assignment taking values in a group $G$. Given a group homomorphism $f \colon G \to H$ and a vertex $v_0 \in V_X$, the monodromy representation attached at $v_0$ to the Galois cover $f_\ast$ defined in \eqref{eq:functorial_Galois_cover} is given by $f \circ \rho_{\alpha,v_0}$.
This shows in particular that if the graph $X(G,\alpha)$ is connected, the graph $X(H,f \circ \alpha)$ will be connected whenever $f$ is surjective.

Moreover, any morphism of groups $f \colon G \to H$ induces another morphism of groups
\begin{equation} \label{eq:morphism_of_groups}
    {\rm ker}(f) \rightarrow {\rm Aut}(X(G,\alpha)/X(H,f \circ \alpha))
\end{equation}
which sends each $\tau \in \ker(f)$ to the automorphism $\phi_\tau \colon X(G,\alpha) \to X(G,\alpha)$ defined by setting $\phi_\tau(v,\sigma) := (v,\tau \cdot \sigma)$ for each vertex $(v,\sigma) \in V_X \times G$, and $\phi_\tau(e,\sigma) := (e,\tau \cdot \sigma)$ for each edge $(e,\sigma) \in \mathbf{E}_X \times G$. 
The morphism of groups \eqref{eq:morphism_of_groups} is actually an isomorphism whenever $f$ is surjective, as follows from the unique lifting theorem \cite[Theorem~5.1]{Sunada:2013}.

In particular, the previous discussion shows that any voltage assignment \[\alpha \colon \mathbf{E}_X \to G\] induces a system of Galois covers indexed by the lattice of quotients of the group $G$. As we will see in the upcoming sections of this paper, it is interesting to study how various graph invariants evolve when moving across this system.

\subsection{The number of spanning trees in finite abelian covers of finite graphs} \label{number}
One particularly interesting kind of invariant of a connected finite graph $X$ is given by its Picard group $\mathrm{Pic}^{0}(X)$, also known as the Jacobian, sandpile or class group of $X$. Its cardinality, denoted by $\kappa(X)$, is given by the number of spanning trees of the graph $X$. The aim of this section is to recall, following \cite[Section~3]{Vallieres:2021}, how this number changes in an abelian cover of a finite graph, using Ihara's determinant formula.

\subsubsection*{Ihara zeta functions}
To do so, we will make use of another invariant of a finite connected graph $X$, namely its Ihara zeta function, which we denote by $Z_X(u)$. This is a rational function of $u$, which can be explicitly computed thanks to the Ihara determinant formula, which we recall below in \eqref{eq:Ihara_determinant}, and is proven in \cite{Kotani/Sunada:2000} and \cite{Bass:1992}. 
More precisely, given an ordering $V_X = \{v_1,\dots,v_g\}$ of the vertices of $X$, we let $A_X := (a_{i,j}) \in \mathbb{Z}^{g \times g}$ denote the adjacency matrix of $X$, which is defined by setting $a_{i,j} := \# \{ e \in \mathbf{E}_X \colon o(e) = v_i, \ t(e) = v_j \}$. Moreover, we let $D_X := (d_{i,j}) \in \mathbb{Z}^{g \times g}$ denote the valency (or degree) matrix of $X$, which is a diagonal matrix defined by setting $d_{i,i} := \mathrm{val}_X(v_i)$ for each $i \in \{1,\dots,g\}$. 
Then, we can write the Ihara zeta function $Z_X(u)$ using the following explicit formula: 
\begin{equation} \label{eq:Ihara_determinant}
    Z_{X}(u) = \frac{1}{(1-u^{2})^{-\chi(X)} \cdot {\rm det}(\mathrm{Id}_g - A_X u + (D_X - \mathrm{Id}_g)u^{2})},
\end{equation}
where $\mathrm{Id}_g$ denotes the $g \times g$ identity matrix, and $\chi(X) := \lvert V_X \rvert - \lvert \mathbf{E}_X \rvert/2$ is the Euler characteristic of $X$.
In particular, we have that $Z_X(u)^{-1} = (1-u^2)^{-\chi(X)} \cdot h_X(u)$, where
\[
h_{X}(u) := {\rm det}(\mathrm{Id}_g - A_X u + (D_X - \mathrm{Id}_g)u^{2}) \in \mathbb{Z}[u]
\]
is a polynomial.

\subsubsection*{Hashimoto's formula}
This explicit formula can be used to relate the Ihara zeta function to the number of spanning trees of $X$. More precisely, given a finite connected graph $X$, one has
\begin{equation} \label{eq:Hashimoto}
    h_{X}'(1) = -2 \chi(X) \kappa(X),
\end{equation}
as was proven by Hashimoto in \cite[Theorem~B]{Hashimoto:1990} (see also \cite[Part~II, Sections~5~and~6]{Bass:1992}).
\begin{comment}
as was proven by Hashimoto in \cite{Hashimoto:1990} (see also \cite{Bass:1992}).
\end{comment}
Such a formula, which can be considered as an analogue of the class number formula in the context of graph theory, admits an equivariant generalization.

\subsubsection*{Artin-Ihara $L$-functions} 
Given a Galois cover of finite connected graphs $Y/X$, one can associate to any linear complex representation $\rho \colon \mathrm{Gal}(Y/X) \to \mathrm{GL}_n(\mathbb{C})$
an Artin-Ihara $L$-function $L_{Y/X}(u,\rho)$. 
This admits an explicit description analogous to \eqref{eq:Ihara_determinant}. 
More precisely, let $d_\rho \in \mathbb{N}$ denote the degree of the representation $\rho$, and fix an ordering $V_X = \{v_1,\dots,v_g\}$ of the vertices of $X$, and a section $\iota \colon V_{X} \rightarrow V_{Y}$ of the projection $V_{Y} \to V_{X}$.  Then, \cite[Theorem~18.15]{Terras:2011} shows that the Artin-Ihara $L$-function $L_{Y/X}(u,\rho)$ can be explicitly computed thanks to the following formula:
\begin{equation*}
    L_{Y/X}(u,\rho) = \frac{1}{(1-u^2)^{- \chi(X) \cdot d_\rho} \cdot \det(\mathrm{Id}_{gd_{\rho}} - A_\rho u + Q_\rho u^2)},
\end{equation*}
where $A_\rho, Q_\rho \in \mathbb{C}^{g d_\rho \times g d_\rho}$ are two explicit matrices, whose definition we now recall.
Given $\sigma \in G$ we let \[A(\sigma) := (\#\{ e \in \mathbf{E}_Y \colon o(e) = \iota(v_i), \ t(e) = \sigma(\iota(v_j)) \})_{i,j = 1,\dots,g},\] 
and we define
\begin{equation*}
	A_\rho := \sum_{\sigma \in G} A(\sigma) \otimes \rho(\sigma) \quad \text{and} \quad Q_\rho := (D_X \otimes \mathrm{Id}_{d_\rho}) - \mathrm{Id}_{g \cdot d_\rho},
\end{equation*}
where $\otimes$ denotes the Kronecker product of matrices.
For more details, we refer the interested reader to \cite[Definition~18.13]{Terras:2011}. 
\begin{comment}
whose definitions can be found in \cite[Definition~18.13]{Terras:2011}. 
\end{comment}
As before, this explicit formula allows one to write \[L_{Y/X}(u,\rho)^{-1} = (1-u^2)^{- \chi(X) \cdot d_\rho} \cdot h_{Y/X}(u,\rho),\] where
\[
    h_{Y/X}(u,\rho) := \det(\mathrm{Id}_{gd_{\rho}} - A_\rho u + Q_\rho u^2) \in \mathbb{C}[u]
\]
is a polynomial.
In particular, if $\mathrm{Gal}(Y/X)$ is abelian and $\psi$ is a character of $\mathrm{Gal}(Y/X)$, we have that $L_{Y/X}(u,\psi)^{-1} = (1-u^2)^{-\chi(X)} \cdot h_{Y/X}(u,\psi)$, where
\begin{equation} \label{eq:h_polynomial}
    h_{Y/X}(u,\psi) := \det(\mathrm{Id}_{g} - A_\psi u + (D_X - \mathrm{Id}_g) u^2),
\end{equation}
because $d_\psi = 1$ and $Q_\psi = D_X - \mathrm{Id}_g$, as follows easily from \cite[Definition~18.13]{Terras:2011}.

\subsubsection*{Spanning trees and abelian covers}
To conclude this sub-section, let us recall that the Artin-Ihara $L$-functions satisfy the Artin formalism (see \cite{Bass:1992} and \cite{Stark/Terras:2000}). 
This implies that for every Galois cover of finite graphs $Y/X$, with Galois group $G := \mathrm{Gal}(Y/X)$, the Ihara zeta function $Z_Y(u)$ admits the following factorization:
\[
    Z_Y(u) = \prod_{\rho \in \mathrm{Irr}(G)} L_{Y/X}(u,\rho)^{d_\rho},
\]
where $\mathrm{Irr}(G)$ denotes the set of equivalence classes of complex irreducible representations of a finite group $G$ (see \cite[Corollary~18.11]{Terras:2011}). 
Therefore, we see easily that
\begin{equation} \label{eq:h_factorization}
    h_{Y}(u) = \prod_{\rho \in \mathrm{Irr}(G)} h_{Y/X}(u,\rho)^{d_\rho},
\end{equation}
using the relation $\chi(Y) = \lvert G \rvert \cdot \chi(X)$ between the Euler characteristics of $Y$ and $X$, which is explained at the end of \cite[Section~5.1]{Sunada:2013}, and the classical identity $\lvert G \rvert = \sum_{\rho \in \mathrm{Irr}(G)} d_\rho^2$, proven for example in \cite[Section~2.4, Corollary~2]{Serre:1977_Reps}.
\begin{comment}
using the relation $\chi(Y) = \lvert G \rvert \cdot \chi(X) = \left( 
\sum_{\rho \in \mathrm{Irr}(G)} d_\rho^2 \right) \chi(X)$ between the Euler characteristics of $Y$ and $X$, which is explained in \cite[Page~55]{Sunada:2013}.
\end{comment}
Finally, if $\rho_0$ denotes the trivial representation of $G$, and $\chi(X) \neq 0$, we have that
\[
\lvert G \rvert \cdot \kappa(Y) = \kappa(X) \cdot \prod_{\rho \neq \rho_0} h_{Y/X}(1,\rho)^{d_\rho},
\]
thanks to the formulas \eqref{eq:Hashimoto} and \eqref{eq:h_factorization}, combined with the fact that 
\[
h_{Y/X}(1,\rho_0) = h_{X}(1) = 0,
\] 
which holds because the Laplacian matrix $D_X - A_X$ is singular (as explained in \cite[Proposition~2.8]{Corry_Perkinson_2018}).
In particular, if $Y/X$ is a Galois cover of finite graphs such that $\chi(X) \neq 0$ and $G := \mathrm{Gal}(Y/X)$ is abelian, we have
\begin{equation} \label{cnf}
|G| \cdot \kappa(Y) = \kappa(X) \prod_{\psi \neq \psi_{0}}h_{Y/X}(1,\psi),
\end{equation}
where $\psi_0$ denotes the trivial character of $G$, and $\psi \in G^\vee$ runs over all non-trivial characters of $G$.

\subsection{Exact formulas for the number of spanning trees} \label{growth}
In this sub-section, we introduce what we take the liberty to call the \emph{Ihara polynomial} $\mathcal{I}_{\alpha}$ associated to a voltage assignment $\alpha:\mathbf{E}_{X} \rightarrow G$.  
When $G = \mathbb{Z}^d$ for some $d \in \mathbb{N}$, this polynomial was introduced, with a slightly different terminology, in the work of Silver and Williams \cite{Silver_Williams_2021} (see \cref{rmk:Silver_Williams} for a comparison between the two definitions). 
Moreover, when $G \in \{ \mathbb{Z}, \mathbb{Z}_\ell \}$, for some rational prime $\ell$, this polynomial was considered by Lei and the second author of the present paper \cite{Lei/Vallieres:2023}. 
In the general case, this invariant consists of an element of the group ring $\mathbb{Z}[G]$, which we write as a generalized polynomial ring $\mathbb{Z}[t^G]$ by adding a formal variable $t$.  

\subsubsection*{The Ihara polynomial}
More precisely, let $X$ be a finite connected graph such that $\chi(X) \neq 0$, and let us start with a voltage assignment $\alpha:\mathbf{E}_{X} \rightarrow G$.  
As before, let us fix an ordering of the vertices of $X$, given by $V_{X} = \{v_{1},\ldots,v_{g}\}$. Then, we can define the matrix
\[ 
A_{\alpha}(t) :=  \left(\sum_{\substack{e \in \mathbf{E}_{X} \\ {\rm inc}(e) = (v_{i},v_{j})}} t^{\alpha(e)}\right) \in \mathbb{Z}[t^G]^{g \times g},
\]
which we use to introduce the \emph{Ihara polynomial}
\begin{equation} \label{eq:Ihara_polynomial}
    \mathcal{I}_{\alpha}(t) = {\rm det}(D_X - A_{\alpha}(t)) \in \mathbb{Z}[t^{G}],
\end{equation}
where $D_X$ denotes, as before, the valency (or degree) matrix of $X$.  When no confusion seems to occur, we will just write $\mathcal{I}_\alpha$ instead of $\mathcal{I}_\alpha(t)$.

We note in particular that the Ihara polynomial is self-reciprocal. In other words, we have the following identity:
\begin{equation} \label{f_eq}
\mathcal{I}_{\alpha}\left( \frac{1}{t}\right) = \mathcal{I}_{\alpha}(t),
\end{equation}
which comes from the fact that the transpose of the matrix $A_\alpha(t)$ equals the matrix $A_\alpha(t^{-1})$ by definition.
Moreover, for every morphism of groups $f \colon G \to H$ we have by definition that
\begin{equation} \label{eq:functorial_Ihara}
    \mathcal{I}_\beta = f_\ast(\mathcal{I}_\alpha),
\end{equation}
where $\beta := f \circ \alpha$ and $f_\ast \colon \mathbb{Z}[t^G] \to \mathbb{Z}[t^H]$ denotes the morphism of rings induced by $f$.

\begin{remark} \label{rmk:Silver_Williams}
    For $G = \mathbb{Z}^d$ with $d \in \mathbb{N}$, this polynomial was introduced in \cite{Silver_Williams_2021} under the name of Laplacian polynomial.
In particular, an unsigned $d$-periodic graph $\mathfrak{X}$ in the sense of \cite[Section~6]{Silver_Williams_2021} can be obtained as $\mathfrak{X} = X(\mathbb{Z}^d,\alpha)$, where $X$ is a finite graph and $\alpha \colon \mathbf{E}_X \to \mathbb{Z}^d$ is a voltage assignment. 
\end{remark}

\subsubsection*{The Ihara polynomial and the number of spanning trees}

Suppose now that $X$ is a finite connected graph such that $\chi(X) \neq 0$, and fix a voltage assignment \[\alpha \colon \mathbf{E}_X \to G\] with values in some finite abelian group $G$, such that the induced graph $X(G,\alpha)$ is connected. The following result expresses how the number of spanning trees changes from $X$ to $X(G,\alpha)$, using the Ihara polynomial $\mathcal{I}_\alpha$.

\begin{proposition} \label{prop:Ihara_spanning_general}
    For every finite connected graph $X$ such that $\chi(X) \neq 0$, and every voltage assignment $\alpha \colon \mathbf{E}_X \to G$ with values in a finite abelian group $G$, such that the associated graph $X(G,\alpha)$ is connected, we have that
    \[
    \lvert G \rvert \cdot \kappa(X(G,\alpha)) = \kappa(X) \cdot \prod_{\psi \neq \psi_0} \mathcal{I}_\alpha(\psi(1))
    \]
    where $\mathcal{I}_\alpha(\psi(1)) := \psi(\mathcal{I}_\alpha) \in \mathbb{C}$ is obtained by applying to $\mathcal{I}_\alpha$ the natural linear extension of the character $\psi$ to the group ring $\mathbb{Z}[t^G]$.
\end{proposition}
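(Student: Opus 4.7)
The plan is to derive the desired identity directly from formula \eqref{cnf} applied to the Galois cover $Y := X(G,\alpha)$ of $X$. Since $\chi(X) \neq 0$, $G$ is finite abelian, and both $X$ and $Y$ are connected by assumption, \eqref{cnf} gives
\[
|G| \cdot \kappa(Y) = \kappa(X) \prod_{\psi \neq \psi_0} h_{Y/X}(1,\psi),
\]
so everything reduces to proving the equality $h_{Y/X}(1,\psi) = \psi(\mathcal{I}_\alpha)$ for every non-trivial character $\psi$ of $G$.

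To do so, I would first use the explicit formula \eqref{eq:h_polynomial} together with the observation that evaluating at $u=1$ collapses the middle and outer terms, yielding
\[
h_{Y/X}(1,\psi) = \det(D_X - A_\psi).
\]
Next, I would compute $A_\psi$ explicitly for the cover $Y = X(G,\alpha)$ using the canonical section $\iota\colon V_X \to V_Y$ defined by $\iota(v_i) := (v_i, 1_G)$. By the definition of the edges of $X(G,\alpha)$ recalled in \cref{loc_fin_graph}, an edge of $Y$ whose origin is $(v_i,1_G)$ and whose terminus is $\sigma \cdot (v_j, 1_G) = (v_j,\sigma)$ is precisely of the form $(e,1_G)$ for some $e \in \mathbf{E}_X$ with $\mathrm{inc}(e) = (v_i,v_j)$ and $\alpha(e) = \sigma$. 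Hence the entries of the matrix $A(\sigma)$ appearing in the definition of $A_\psi$ satisfy
\[
A(\sigma)_{i,j} = \#\{ e \in \mathbf{E}_X \colon \mathrm{inc}(e) = (v_i,v_j), \ \alpha(e) = \sigma \}.
\]
Summing against $\psi$ and exchanging the order of summation then gives
\[
(A_\psi)_{i,j} \;=\; \sum_{\sigma \in G} \psi(\sigma)\, A(\sigma)_{i,j} \;=\; \sum_{\substack{e \in \mathbf{E}_X \\ \mathrm{inc}(e)=(v_i,v_j)}} \psi(\alpha(e)),
\]
which is exactly the $(i,j)$ entry of the matrix obtained from $A_\alpha(t)$ by applying the ring morphism $\psi \colon \mathbb{Z}[t^G] \to \mathbb{C}$ entry-wise.

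Finally, since $D_X$ has integer entries, $\psi$ fixes it, and since the determinant is a polynomial expression in the entries of a matrix, it commutes with the ring homomorphism $\psi$. Therefore
\[
h_{Y/X}(1,\psi) = \det(D_X - A_\psi) = \psi\bigl(\det(D_X - A_\alpha(t))\bigr) = \psi(\mathcal{I}_\alpha) = \mathcal{I}_\alpha(\psi(1)),
\]
and substituting this back into \eqref{cnf} yields the statement. The only real subtlety is the bookkeeping in identifying $A(\sigma)$ with the count of $\alpha$-decorated edges of $X$ under the canonical section; once this is done, the rest is formal manipulation, so I do not expect any serious obstacle.
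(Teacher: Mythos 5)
Your proof is correct and follows essentially the same route as the paper: reduce to \eqref{cnf} and verify that $h_{Y/X}(1,\psi) = \psi(\mathcal{I}_\alpha)$ by identifying $A_\psi$ with the entry-wise $\psi$-evaluation of $A_\alpha(t)$ and commuting $\psi$ past the determinant. The only difference is that the paper outsources the identity $A_\psi = \widetilde{A}_\psi$ to \cite[Corollary~5.3]{McGown/Vallieres:2022a}, whereas you verify it directly from the canonical section $\iota(v_i) = (v_i, 1_G)$ and the definition of the edges of $X(G,\alpha)$; your computation is correct and makes the argument self-contained.
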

\begin{proof}
    First of all, observe that $\mathcal{I}_\alpha(\psi(1)) = \det(D_X - \widetilde{A}_\psi)$, where we define
    \[
        \widetilde{A}_{\psi} := \left(\sum_{\substack{e \in \mathbf{E}_{X} \\ {\rm inc}(e) = (v_{i},v_{j})}} \psi(\alpha(e)) \right) \in \mathbb{C}^{g \times g}
    \]
    for every character $\psi \in G^\vee$. In particular, one can prove that $\widetilde{A}_\psi = A_\psi$, as explained in \cite[Corollary~5.3]{McGown/Vallieres:2022a}.
    Therefore, we see from the definition of the polynomial $h_{Y/X}(u,\psi)$, which was given in \eqref{eq:h_polynomial}, that $\mathcal{I}_\alpha(\psi(1)) = h_{Y/X}(1,\psi)$ for every character $\psi \in G^\vee$. To conclude the proof, it is just sufficient to substitute this equality in the explicit expression
    \[
    |G| \cdot \kappa(Y) = \kappa(X) \prod_{\psi \neq \psi_{0}}h_{Y/X}(1,\psi)
    \]
    which was recalled in \eqref{cnf}.
\end{proof}

\subsubsection*{$\mathbb{Z}$-towers of graphs and Pierce-Lehmer sequences}

From now on, we will specialize to the case of $\mathbb{Z}$-towers of graphs.
More precisely, we will consider a finite connected graph $X$ such that $\chi(X) \neq 0$, endowed with a voltage assignment $\alpha \colon \mathbf{E}_X \to \mathbb{Z}$ with values in the additive group of the integers, such that the derived graph $X_\infty := X(\mathbb{Z},\alpha)$ is connected, which is equivalent to say that there exists a vertex $v_0 \in V_X$ such that the monodromy representation $\rho_{\alpha,v_0} \colon \pi_1(X,v_0) \to \mathbb{Z}$ is surjective, as explained in \cref{connectedness}.
In this case, we have a natural isomorphism $\mathbb{Z}[t^{\mathbb{Z}}] \cong \mathbb{Z}[t^{\pm 1}]$. 
Therefore, we see from \eqref{f_eq} that the Ihara polynomial can be written as
\[
\mathcal{I}_{\alpha}(t) = c_0 + c_1 (t + t^{-1}) + \dots + c_b (t^b + t^{-b})
\]
for some $c_0,\dots,c_b \in \mathbb{Z}$ such that $c_b \neq 0$. Clearing denominators, we can define
\[
I_\alpha(t) := t^b \mathcal{I}_\alpha(t) \in \mathbb{Z}[t],
\]
which is a polynomial of degree $2 b$ such that $I_\alpha(0) \neq 0$.
Finally, we define $e := \mathrm{ord}_{t = 1}(I_\alpha)$, and we write
\begin{equation} \label{eq:J_alpha}
    I_\alpha(t) = (t-1)^e J_\alpha(t)
\end{equation}
for some polynomial $J_\alpha \in \mathbb{Z}[t]$ such that $J_\alpha(0) \cdot J_\alpha(1) \neq 0$.

Now, one can associate to the voltage assignment $\alpha \colon \mathbf{E}_X \to \mathbb{Z}$ a system of finite graphs \[X_n := X(\mathbb{Z}/n \mathbb{Z},\pi_n \circ \alpha),\] where $\pi_n \colon \mathbb{Z} \twoheadrightarrow \mathbb{Z}/n \mathbb{Z}$ is the natural quotient map. 
In particular, each of these graphs will be connected, because $X_\infty$ is assumed to be connected, and the maps $\pi_n$ are surjective.
Moreover, the number of spanning trees of each graph $X_n$ can be computed in terms of a Pierce-Lehmer sequence associated to the polynomial $J_\alpha$, as the following result shows.

\begin{theorem} \label{thm:kN_Pierce_Lehmer}
    Let $X$ be a finite connected graph such that $\chi(X) \neq 0$, and fix a voltage assignment $\alpha \colon \mathbf{E}_X \to \mathbb{Z}$ such that the graph $X_\infty := X(\mathbb{Z},\alpha)$ is connected. Moreover, for every $n \in \mathbb{N}$ we let $X_n := X(\mathbb{Z}/n \mathbb{Z},\pi_n \circ \alpha)$, where $\pi_n \colon \mathbb{Z} \twoheadrightarrow \mathbb{Z}/n \mathbb{Z}$ is the natural quotient map. Then, the number of spanning trees of $X_n$ can be computed as
    \begin{equation} \label{eq:kN_Delta}
    \kappa(X_{n}) = (-1)^{b(n-1)} \cdot \kappa(X) \cdot n^{e-1} \cdot \frac{\Delta_{n}(J_{\alpha})}{\Delta_{1}(J_{\alpha})}
    \end{equation}
    where the integers $b := -\ord_{t = 0}(\mathcal{I}_\alpha) \geq 0$ and $e := \ord_{t = 1}(\mathcal{I}_\alpha) \geq 1$ are defined in terms of the Ihara polynomial $\mathcal{I}_\alpha \in \mathbb{Z}[t^{\pm 1}]$, whose definition was recalled in \eqref{eq:Ihara_polynomial}.
    Moreover, $\{ \Delta_n(J_\alpha) \}_{n \in \mathbb{N}}$ is the Pierce-Lehmer sequence, defined as in \eqref{eq:Pierce_Lehmer}, which is associated to the polynomial \[J_\alpha(t) := t^b \cdot (t-1)^{-e} \cdot \mathcal{I}_\alpha(t) \in \mathbb{Z}[t].\]
\end{theorem}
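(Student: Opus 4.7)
The plan is to derive the formula directly from \cref{prop:Ihara_spanning_general} applied to the finite quotient $\pi_n\colon\Z\twoheadrightarrow\Z/n\Z$, and then translate the resulting product over characters of $\Z/n\Z$ into the Pierce–Lehmer quantity $\Delta_n(J_\alpha)$ via a careful sign accounting.

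First, I would observe that the characters of $\Z/n\Z$ are in bijection with the $n$-th roots of unity, via $\psi_\zeta(a) = \zeta^a$. Thanks to the functoriality of the Ihara polynomial recorded in \eqref{eq:functorial_Ihara}, the image of $\mathcal{I}_\alpha \in \Z[t^{\pm 1}]$ under the character $\psi_\zeta$ extended $\Z$-linearly to the group ring is simply $\mathcal{I}_\alpha(\zeta)$. Applying \cref{prop:Ihara_spanning_general} with $G = \Z/n\Z$ therefore yields
\[
n\cdot \kappa(X_n) \;=\; \kappa(X)\cdot \prod_{\substack{\zeta^n=1\\ \zeta\neq 1}} \mathcal{I}_\alpha(\zeta).
\]

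Second, I would use the factorisation $\mathcal{I}_\alpha(t) = t^{-b}(t-1)^e J_\alpha(t)$, which comes from the definitions of $b$, $e$ and $J_\alpha$, to split the product into three factors indexed by the nontrivial $n$-th roots of unity:
\[
\prod_{\substack{\zeta^n=1\\ \zeta\neq 1}} \mathcal{I}_\alpha(\zeta)
\;=\;\Bigl(\prod_{\zeta\neq 1}\zeta^{-b}\Bigr)\Bigl(\prod_{\zeta\neq 1}(\zeta-1)^e\Bigr)\Bigl(\prod_{\zeta\neq 1}J_\alpha(\zeta)\Bigr).
\]
The first factor is computed from the classical identity $\prod_{\zeta^n=1}\zeta = (-1)^{n+1}$, giving $(-1)^{b(n-1)}$ after noting that $n+1$ and $n-1$ have the same parity. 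The second factor is computed from $\prod_{\zeta\neq 1}(1-\zeta) = n$ (since this is $(t^n-1)/(t-1)$ evaluated at $t=1$), giving $(-1)^{e(n-1)} n^e$. For the third factor, the formulas recalled in \cref{sec:resultant} give, with $d = \deg J_\alpha = 2b-e$,
\[
\prod_{\zeta^n=1} J_\alpha(\zeta) \;=\; (-1)^{dn}\,\mathrm{Res}(J_\alpha,t^n-1) \;=\; (-1)^{dn}\,\Delta_n(J_\alpha),
\]
and dividing by $J_\alpha(1) = (-1)^d \Delta_1(J_\alpha)$ yields $(-1)^{e(n-1)}\Delta_n(J_\alpha)/\Delta_1(J_\alpha)$ after simplifying the exponents modulo $2$.

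Finally, multiplying the three pieces together produces $(-1)^{b(n-1)} n^e \,\Delta_n(J_\alpha)/\Delta_1(J_\alpha)$, since the two signs $(-1)^{e(n-1)}$ combine to $1$. Dividing by the initial factor of $n$ then gives precisely \eqref{eq:kN_Delta}. I expect the main obstacle to be purely bookkeeping: tracking the parities coming from the Laurent-to-polynomial conversion via the factor $t^{-b}$, from the normalisation $\Delta_1(J_\alpha)$ versus $J_\alpha(1)$, and from the $(-1)^{mn}$ in the symmetric form of the resultant. The use of $\chi(X)\neq 0$ is implicit in invoking \cref{prop:Ihara_spanning_general}, which relies on \eqref{cnf}; connectedness of every $X_n$ follows from surjectivity of $\rho_{\alpha,v_0}$ together with surjectivity of $\pi_n$, via \cref{connectedness}.
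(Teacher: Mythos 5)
Your proposal is correct and follows essentially the same route as the paper: apply \cref{prop:Ihara_spanning_general} to $X_n/X$, use the functoriality \eqref{eq:functorial_Ihara} to turn the character product into a product of $\mathcal{I}_\alpha$ over the nontrivial $n$-th roots of unity, and then carry out the sign and resultant bookkeeping (the paper packages your three-factor splitting as $(-1)^{b(n-1)}\mathrm{Res}\bigl(I_\alpha(t),\tfrac{t^n-1}{t-1}\bigr) = (-1)^{b(n-1)} n^e\,\Delta_n(J_\alpha)/\Delta_1(J_\alpha)$ via multiplicativity of resultants, but the computation is the same and your parities all check out). The only item you omit is the justification that $e\geq 1$, which is asserted in the statement and which the paper obtains from $\mathcal{I}_\alpha(1)=\det(D_X-A_X)=0$, the Laplacian being singular.
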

\begin{proof}
    First of all, observe that $e \geq 1$ because $\mathcal{I}_\alpha(1) = \det(D_X - A_X)$, where $D_X$ and $A_X$ are respectively the degree and adjacency matrices associated to $X$, whose difference $D_X - A_X$ is singular, as explained in \cite[Proposition~2.8]{Corry_Perkinson_2018}.
    Now, applying \cref{prop:Ihara_spanning_general} to the Galois cover $X_n/X$, we see that
    \begin{equation} \label{eq:spanning_trees_Zn}
        n \cdot \kappa(X_n) = \kappa(X) \cdot \prod_{\psi \neq \psi_0} \mathcal{I}_{\alpha_n}(\psi(1))
    \end{equation}
    where $\alpha_n := \pi_n \circ \alpha$ for every $n \in \mathbb{N}$.
    Moreover, it is easy to see using \eqref{eq:functorial_Ihara} that
    \begin{equation} \label{eq:Ihara_roots_of_unity}
        \prod_{\psi \neq \psi_0} \mathcal{I}_{\alpha_n}(\psi(1)) = \prod_{\psi \neq \psi_0} \mathcal{I}_\alpha(\psi(1)) = \prod_{\zeta \in W_n^\ast} \mathcal{I}_\alpha(\zeta),
    \end{equation}
    where $W_n^\ast := W_n \setminus \{1\}$ denotes the set of non-trivial roots of unity whose order divides $n$.  Now, let us observe that
    \begin{equation} \label{eq:resultant_roots_of_unity}
        \begin{aligned}
            \prod_{\zeta \in W_n^\ast} \mathcal{I}_\alpha(\zeta) &= \prod_{\zeta \in W_n^\ast} (\zeta^{-b} \cdot I_\alpha(\zeta)) = (-1)^{b (n-1)} \prod_{\zeta \in W_n^\ast} I_\alpha(\zeta) \\ &= (-1)^{b (n-1)} \mathrm{Res}\left( I_\alpha(t), \frac{t^n-1}{t-1} \right)
        \end{aligned}
    \end{equation}
    as follows from the definition of resultant recalled in \cref{sec:resultant}. 
    Thus, we see that
    \begin{equation} \label{resultant}
    n \cdot \kappa(X_{n}) = (-1)^{b(n-1)} \cdot \kappa(X) \cdot  {\rm Res} \left(I_{\alpha}(t),\frac{t^{n} - 1}{t-1} \right) 
    \end{equation}
    by combining \eqref{eq:spanning_trees_Zn} with \eqref{eq:Ihara_roots_of_unity} and \eqref{eq:resultant_roots_of_unity}.
    To conclude, it suffices to observe that
    \[
        \begin{aligned}
            {\rm Res} \left(I_{\alpha}(t),\frac{t^{n} - 1}{t-1} \right) &= {\rm Res} \left(t-1,\frac{t^{n} - 1}{t-1} \right)^e \cdot {\rm Res} \left(J_{\alpha}(t),\frac{t^{n} - 1}{t-1} \right) \\ &= n^e \cdot {\rm Res} \left(J_{\alpha}(t),\frac{t^{n} - 1}{t-1} \right) = n^e \cdot \frac{\Delta_n(J_\alpha)}{\Delta_1(J_\alpha)},
        \end{aligned}
    \]
    thanks to the multiplicative property of resultants.
\end{proof}

\begin{remark}
Formulas such as \eqref{resultant} appear also in the theory of curves over finite fields and in knot theory. 
Indeed:
\begin{itemize}
    \item if $C$ is a non-singular, geometrically irreducible projective curve over a finite field $\mathbb{F}_{q}$ with at least one rational point over $\mathbb{F}_{q}$, and $J$ is its Jacobian variety, then \cite[Corollary,~Page 110]{Rosen:2002} implies that
\[\#J(\mathbb{F}_{q^{n}}) = |{\rm Res}(P_{C}(t),t^{n}-1 )|, \]
where $P_{C}(t)$ is the Weil polynomial of $C$, defined as the reverse of the $L$-polynomial $L_{C}(t)$;
\item if $K \subseteq S^3$ is a knot, and $M_n$ is a Galois cover of $M := S^3 \setminus K$, with Galois group $\mathbb{Z}/n \mathbb{Z}$, Fox's formula (see \cite{Weber:1979}) implies that
\[\#H_{1}(X_{n},\mathbb{Z})_\text{tors} = |{\rm Res}(A_{K}(t), t^{n}-1)|,\]
where $A_{K}(t)$ is the Alexander polynomial associated to the knot $K$.
\end{itemize}
%Therefore, the Ihara polynomial associated to a voltage assignment is analogous to the Weil polynomial associated to towers of curves over finite fields \DV{I get confused with this analogy.  In some ways, the Weil polynomial should be analogous to $h_{X}$, I guess...}, and to the Alexander polynomial associated to a knot complement.
\end{remark}

\begin{remark} \label{rmk:p_l_adic_graphs}
    Let us note that, in the setting of \cref{thm:kN_Pierce_Lehmer}, the explicit formula \eqref{eq:kN_Delta} implies that $J_\alpha$ does not vanish at roots of unity. Indeed, if this was the case we would have $\kappa(X_n) = 0$ for some $n \geq 2$, which is absurd because $X_n$ is not empty.  
    Therefore, combining \cref{thm:kN_Pierce_Lehmer} with \cref{cor:PL_Iwasawa,cor:PL_Washington}, one can recover the formulas \eqref{eq:Iwasawa} and \eqref{eq:Washington} for the $\mathbb{Z}_\ell$-towers $\{X_{\ell^n}\}_{n = 0}^{+\infty}$ induced from a $\mathbb{Z}$-tower $\{X_n\}_{n = 1}^{+\infty}$.
    More generally, combining \cref{thm:kN_Pierce_Lehmer} with \cref{cor:PL_p_adic_valuation} will allow us to prove \cref{thm:main_thm_intro}, as we will explain in \cref{p_adic_val}.
\end{remark}
\begin{comment}
Let us note that, combining \cref{thm:kN_Pierce_Lehmer} with \cref{cor:PL_Iwasawa,cor:PL_Washington}, one can recover the formulas \eqref{eq:Iwasawa} and \eqref{eq:Washington} for the $\mathbb{Z}_\ell$-towers $\{X_{\ell^n}\}_{n = 0}^{+\infty}$    induced from a $\mathbb{Z}$-tower $\{X_n\}_{n = 1}^{+\infty}$.
\end{comment}

\begin{remark}
    Since the polynomial $J_{\alpha}$ is a reciprocal polynomial, it is known that the quantity $\lvert \Delta_{n}(J_{\alpha})/\Delta_{1}(J_{\alpha}) \rvert$ is a square when $n$ is odd and $J_{\alpha}$ is a monic polynomial, as explained for instance in \cite[Section~2]{EEW:2000}. 
    It follows that if $p$ and $\ell$ are two distinct rational primes with $\ell$ odd and $J_{\alpha}$ is monic, then
$${\rm ord}_{p}(\kappa(X_{\ell^{k}})) = {\rm ord}_{p}(\kappa(X)) + {\rm ord}_{p}\left(\Delta_{\ell^{k}}(J_{\alpha})/\Delta_{1}(J_{\alpha}) \right),  $$
and the parity of the number ${\rm ord}_{p}(\kappa(X_{\ell^{k}}))$, for all $k \ge 1$, depends only on the parity of ${\rm ord}_{p}(\kappa(X))$.  
This remark explains the parity of the $p$-adic valuation of the number of spanning trees in various $\mathbb{Z}_{\ell}$-towers appearing in \cite{Vallieres:2021, McGown/Vallieres:2022, McGown/Vallieres:2022a} and \cite{Lei/Vallieres:2023}, since in each case the tower in question is constructed from a voltage assignment $\alpha \colon \mathbf{E}_X \to \mathbb{Z}_\ell$ such that $\alpha(\mathbf{E}_X) \subseteq \mathbb{Z}$.  
We point out as well that the formula \eqref{eq:kN_Delta} is compatible with various results in the literature, such as \cite[Theorem 5.5]{Kwon/Mednykh:2017}, \cite[Theorem 5.5]{Mednykh:2018}, and \cite[Theorem 3]{Mednykh:2019}. 
\end{remark}

\subsection{An explicit example}
\label{sec:explicit_example_bouquet}

Let us revisit \cite[Example~2]{Vallieres:2021} using the results proven in the present paper. 
Consider the bouquet graph $X = B_{2}$ on two loops and pick an orientation $S = \{s_{1},s_{2} \}$.  Consider the function $\alpha:S \rightarrow \mathbb{Z}$ given by $\alpha(s_{1}) = 3$ and $\alpha(s_{2}) = 5$.  Note that $\alpha(s_{1}^{2} \cdot \bar{s}_{2})=1$, and thus \cref{connectedness} implies that $X(\mathbb{Z},\alpha)$ is connected. Therefore, so are all the finite graphs \[X_{n} := X(\mathbb{Z}/n \mathbb{Z},\pi_n \circ \alpha),\] where $\pi_n \colon \mathbb{Z} \twoheadrightarrow \mathbb{Z}/n \mathbb{Z}$ denotes the canonical projection map. 
The infinite graph $X(\mathbb{Z},\alpha)$ is a connected $4$-regular graph which is not a tree, but it is a quotient of the infinite $4$-regular tree.  All the finite graphs $X_{n}$ are finite quotients of $X(\mathbb{Z},\alpha)$. 
Moreover, one can draw each of these graphs $X_n$, using their definition, and we did so for $n \in \{ 1,\ldots,10,25,27 \}$ in the following figure, where we also drew a line $X_n \to X_m$ whenever $m \mid n$:
\begin{equation*} 
\begin{tikzcd}[sep=1.8em, font=\small,scale cd=0.8,nodes in empty cells]
%& & & \tikzfig{prova} \arrow[dlll,dashed] \arrow[ddll,dashed] \arrow[dl,dashed] \arrow[dddd,dashed] \arrow[ddr,dashed] \arrow[ddrr,dashed] \arrow[dddrrr,dashed] & & & \\
\vdots \arrow[d] & & \vdots \arrow[d]& & & & \\
\begin{tikzpicture}[baseline={([yshift=-0.6ex] current bounding box.center)}]
% create the node
\node[draw=none,minimum size=1cm,regular polygon,regular polygon sides=8] (a) {};

% draw a black dot in each vertex
\foreach \x in {1,2,...,8}
  \fill (a.corner \x) circle[radius=0.7pt];
  
\foreach \y\z in {1/4,2/5,3/6,4/7,5/8,6/1,7/2,8/3}
  \path (a.corner \y) edge [bend left=10] (a.corner \z);
  
\foreach \y\z in {1/6,2/7,3/8,4/1,5/2,6/3,7/4,8/5}
  \path (a.corner \y) edge [bend left=10] (a.corner \z);
\end{tikzpicture} \arrow[d]& \vdots \arrow[d] & \begin{tikzpicture}[baseline={([yshift=-0.6ex] current bounding box.center)}]
% create the node
\node[draw=none,minimum size=1cm,regular polygon,regular polygon sides=27] (a) {};

% draw a black dot in each vertex
\foreach \x in {1,2,...,27}
  \fill (a.corner \x) circle[radius=0.7pt];
  
\foreach \y\z in {1/4,2/5,3/6,4/7,5/8,6/9,7/10,8/11,9/12,10/13,11/14,12/15,13/16,14/17,15/18,16/19,17/20,18/21,19/22,20/23,21/24,22/25,23/26,24/27,25/1,26/2,27/3}
  \path (a.corner \y) edge  (a.corner \z);
  
\foreach \y\z in {1/6,2/7,3/8,4/9,5/10,6/11,7/12,8/13,9/14,10/15,11/16,12/17,13/18,14/19,15/20,16/21,17/22,18/23,19/24,20/25,21/26,22/27,23/1,24/2,25/3,26/4,27/5}
  \path (a.corner \y) edge  (a.corner \z);

\end{tikzpicture} \arrow[d]& & \vdots \arrow[d] & \vdots \arrow[d] & \hspace{1cm} \\
\begin{tikzpicture}[baseline={([yshift=-0.6ex] current bounding box.center)}]
% create the node
\node[draw=none,minimum size=1cm,regular polygon,regular polygon sides=4] (a) {};

% draw a black dot in each vertex
\foreach \x in {1,2,...,4}
  \fill (a.corner \x) circle[radius=0.7pt];

\path (a.corner 1) edge [bend left=20] (a.corner 2);
\path (a.corner 1) edge [bend right=20] (a.corner 2);
\path (a.corner 2) edge [bend left=20] (a.corner 3);
\path (a.corner 2) edge [bend right=20] (a.corner 3);
\path (a.corner 3) edge [bend left=20] (a.corner 4);
\path (a.corner 3) edge [bend right=20] (a.corner 4);
\path (a.corner 4) edge [bend left=20] (a.corner 1);
\path (a.corner 4) edge [bend right=20] (a.corner 1);
\end{tikzpicture} \arrow[d]& \begin{tikzpicture}[baseline={([yshift=-0.6ex] current bounding box.center)}]
% create the node
\node[draw=none,minimum size=1cm,regular polygon,regular polygon sides=6] (a) {};

% draw a black dot in each vertex
\foreach \x in {1,2,...,6}
  \fill (a.corner \x) circle[radius=0.7pt];
  
\foreach \y\z in {1/4,2/5,3/6,4/1,5/2,6/3}
  \path (a.corner \y) edge [bend left=10] (a.corner \z);
  
\foreach \y\z in {1/6,2/1,3/2,4/3,5/4,6/5}
  \path (a.corner \y) edge   (a.corner \z);

\end{tikzpicture} \arrow[dl] \arrow[dr]& \begin{tikzpicture}[baseline={([yshift=-0.6ex] current bounding box.center)}]
% create the node
\node[draw=none,minimum size=1cm,regular polygon,regular polygon sides=9] (a) {};

% draw a black dot in each vertex
\foreach \x in {1,2,...,9}
  \fill (a.corner \x) circle[radius=0.7pt];
  
\foreach \y\z in {1/4,2/5,3/6,4/7,5/8,6/9,7/1,8/2,9/3}
  \path (a.corner \y) edge  (a.corner \z);
  
\foreach \y\z in {1/6,2/7,3/8,4/9,5/1,6/2,7/3,8/4,9/5}
  \path (a.corner \y) edge   (a.corner \z);

\end{tikzpicture} \arrow[d]& \hspace{1cm} & \begin{tikzpicture}[baseline={([yshift=-0.6ex] current bounding box.center)}]
% create the node
\node[draw=none,minimum size=1cm,regular polygon,regular polygon sides=10] (a) {};

% draw a black dot in each vertex
\foreach \x in {1,2,...,10}
  \fill (a.corner \x) circle[radius=0.7pt];
  
\foreach \y\z in {1/4,2/5,3/6,4/7,5/8,6/9,7/10,8/1,9/2,10/3}
  \path (a.corner \y) edge (a.corner \z);
  
\foreach \y\z in {1/6,2/7,3/8,4/9,5/10}
  \path (a.corner \y) edge [bend left=10] (a.corner \z);
  
\foreach \y\z in {6/1,7/2,8/3,9/4,10/5}
  \path (a.corner \y) edge [bend left=10] (a.corner \z);

\end{tikzpicture} \arrow[dllll] \arrow[dr]& \begin{tikzpicture}[baseline={([yshift=-0.6ex] current bounding box.center)}]
% create the node
\node[draw=none,minimum size=1cm,regular polygon,regular polygon sides=25] (a) {};

% draw a black dot in each vertex
\foreach \x in {1,2,...,25}
  \fill (a.corner \x) circle[radius=0.7pt];
  
\foreach \y\z in {1/4,2/5,3/6,4/7,5/8,6/9,7/10,8/11,9/12,10/13,11/14,12/15,13/16,14/17,15/18,16/19,17/20,18/21,19/22,20/23,21/24,22/25,23/1,24/2,25/3}
  \path (a.corner \y) edge  (a.corner \z);
  
\foreach \y\z in {1/6,2/7,3/8,4/9,5/10,6/11,7/12,8/13,9/14,10/15,11/16,12/17,13/18,14/19,15/20,16/21,17/22,18/23,19/24,20/25,21/1,22/2,23/3,24/4,25/5}
  \path (a.corner \y) edge  (a.corner \z);

\end{tikzpicture} \arrow[d] & \vdots \arrow[d] \\
\begin{tikzpicture}[baseline={([yshift=-0.6ex] current bounding box.center)}]
% create the node
\node[draw=none,minimum size=1cm,regular polygon,rotate = -45,regular polygon sides=4] (a) {};

% draw a black dot in each vertex
  \fill (a.corner 1) circle[radius=0.7pt];
  \fill (a.corner 3) circle[radius=0.7pt];
  
  \path (a.corner 1) edge [bend left=20] (a.corner 3);
  \path (a.corner 1) edge [bend left=60] (a.corner 3);
  \path (a.corner 1) edge [bend right=20] (a.corner 3);
  \path (a.corner 1) edge [bend right=60] (a.corner 3);

\end{tikzpicture} \arrow[drrr]& \hspace{1cm}& \begin{tikzpicture}[baseline={([yshift=-0.6ex] current bounding box.center)}]
% create the node
\node[draw=none,minimum size=1cm,regular polygon,regular polygon sides=3] (a) {};

% draw a black dot in each vertex
\foreach \x in {1,2,...,3}
  \fill (a.corner \x) circle[radius=0.7pt];
\draw (a.corner 1) to [in=50,out=130,distance = 0.3cm,loop] (a.corner 1);
\draw (a.corner 2) to [in=50,out=130,distance = 0.3cm,loop] (a.corner 2);
\draw (a.corner 3) to [in=50,out=130,distance = 0.3cm,loop] (a.corner 3);
  
\foreach \y\z in {1/3,2/1,3/2}
  \path (a.corner \y) edge (a.corner \z);
\end{tikzpicture}  \arrow[dr]& \hspace{1cm}& \hspace{1cm}&\begin{tikzpicture}[baseline={([yshift=-0.6ex] current bounding box.center)}]
% create the node
\node[draw=none,minimum size=1cm,regular polygon,regular polygon sides=5] (a) {};

% draw a black dot in each vertex
\foreach \x in {1,2,...,5}
  \fill (a.corner \x) circle[radius=0.7pt];
\draw (a.corner 1) to [in=50,out=130,distance = 0.3cm,loop] (a.corner 1);
\draw (a.corner 2) to [in=50,out=130,distance = 0.3cm,loop] (a.corner 2);
\draw (a.corner 3) to [in=50,out=130,distance = 0.3cm,loop] (a.corner 3);
\draw (a.corner 4) to [in=50,out=130,distance = 0.3cm,loop] (a.corner 4);
\draw (a.corner 5) to [in=50,out=130,distance = 0.3cm,loop] (a.corner 5);
  
\foreach \y\z in {1/4,2/5,3/1,4/2,5/3}
  \path (a.corner \y) edge (a.corner \z);

\end{tikzpicture} \arrow[dll] & \begin{tikzpicture}[baseline={([yshift=-0.6ex] current bounding box.center)}]
% create the node
\node[draw=none,minimum size=1cm,regular polygon,regular polygon sides=7] (a) {};

% draw a black dot in each vertex
\foreach \x in {1,2,...,7}
  \fill (a.corner \x) circle[radius=0.7pt];
  
\foreach \y\z in {1/4,2/5,3/6,4/7,5/1,6/2,7/3}
  \path (a.corner \y) edge  (a.corner \z);
  
\foreach \y\z in {1/6,2/7,3/1,4/2,5/3,6/4,7/5}
  \path (a.corner \y) edge   (a.corner \z);

\end{tikzpicture} \arrow[dlll] \\
\hspace{1cm}& \hspace{1cm}& \hspace{1cm}& \begin{tikzpicture}[baseline={([yshift=-1.7ex] current bounding box.center)}]
% create the node
\node[draw=none,minimum size=1cm,regular polygon,regular polygon sides=1] (a) {};
% draw a black dot in each vertex
\foreach \x in {1}
  \fill (a.corner \x) circle[radius=0.7pt];
\draw (a.corner 1) to [in=50,out=130,distance = 0.5cm,loop] (a.corner 1);
\draw (a.corner 1) to [in=50,out=130,distance = 0.3cm,loop] (a.corner 1);
\end{tikzpicture}&\hspace{1cm} &\hspace{1cm} & \hspace{1cm}
\end{tikzcd}
\end{equation*}

Note in particular that the $\mathbb{Z}_2$-tower considered in \cite[Example~2]{Vallieres:2021} corresponds to the leftmost column of the previous figure.
For the $\mathbb{Z}$-tower considered in the present example, the Ihara polynomial is given by
\begin{equation}
\begin{aligned}
\mathcal{I}_{\alpha}(t) = 4 - (t^3 + t^{-3}) - (t^5 + t^{-5}) = t^{-5} \cdot (t-1)^{2} \cdot J_{\alpha}(t), 
\end{aligned}
\end{equation}
with $J_{\alpha}(t) = -(t^{8} + 2t^{7} + 4t^{6} + 6t^{5}+8t^{4} + 6t^{3} + 4t^{2} + 2t + 1)$.  Using \textsc{SageMath} \cite{SAGE} 
, we computed the number of spanning trees $\kappa(X_{n})$, the resultants $\mathrm{Res}(I_\alpha(t),\frac{t^n-1}{t-1})$ and the values of the Pierce-Lehmer sequence $\Delta_n(J_\alpha)$ for each $n \in \{ 1,\ldots,10\}$. Doing so, we obtained the values which are tabulated in the following table, which shows in particular that the relationship between these invariants is the one predicted by \eqref{eq:kN_Delta}:
\begin{scriptsize}
\begin{center}
    \vspace{1ex}
    \begin{tabular}{c*{10}{c}}
    \toprule
        $n$ & 1 & 2 & 3 & 4 & 5 & 6 & 7 & 8 & 9 & 10 \\
    \midrule
        $\kappa(X_n)$ & 1 & 4 & 3 & 32 & 5 & 300 & 1183 & 1024 & 12321 & 16820 \\
    \midrule
        $\mathrm{Res}(I_\alpha(t),\frac{t^n-1}{t-1})$ & 1 & -8 & 9 & -128 & 25 & -1800 & 8281 & -8192 & 110889 & -168200 \\
    \midrule
        $\Delta_n(J_\alpha)$ & -34 & 68 & -34 & 272 & -34 & 1700 & -5746 & 4352 & -46546 & 57188 \\
    \bottomrule
    \end{tabular}
\end{center}
\end{scriptsize}

\subsection{Asymptotics for the number of spanning trees}
\label{sec:asymptotics}

The aim of this sub-section is to obtain some asymptotic results for the number of spanning trees in a $\mathbb{Z}$-tower of graphs, using the relation between the number of spanning trees and Pierce-Lehmer sequences, provided by \cref{thm:kN_Pierce_Lehmer}, in combination with the asymptotic results for Pierce-Lehmer sequences, which we explored in \cref{sec:Pierce_Lehmer}.

\subsubsection*{Archimedean asymptotics}

We will start from the Archimedean asymptotics of the number of spanning trees, which are provided by the following corollary of \cref{thm:kN_Pierce_Lehmer}.

\begin{corollary} \label{cor:archimedean_asypmtotics}
Let $X$ be a finite connected graph such that $\chi(X) \neq 0$, and fix a voltage assignment $\alpha \colon \mathbf{E}_X \to \mathbb{Z}$ such that $X(\mathbb{Z},\alpha)$ is connected. 
Then, if the polynomial $J_{\alpha}$ defined by \eqref{eq:J_alpha} does not have any root on the unit circle of $\mathbb{C}$, one has
\[
\kappa(X_{n}) \sim n^{e-1} \frac{\kappa(X)}{|\Delta_{1}(J_{\alpha})|} M_{\infty}(I_{\alpha})^{n}
\]
as $n \to +\infty$.
\end{corollary}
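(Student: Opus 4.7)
The plan is to combine the exact formula \eqref{eq:kN_Delta} from \cref{thm:kN_Pierce_Lehmer} with the Archimedean asymptotic \eqref{euclidean_growth_away_uc} for Pierce-Lehmer sequences, which was recorded in the remark following \cref{cor:PL_p-adic_asymptotic}. More precisely, \cref{thm:kN_Pierce_Lehmer} gives the identity
\[
\kappa(X_{n}) \;=\; (-1)^{b(n-1)} \,\kappa(X)\, n^{e-1}\, \frac{\Delta_{n}(J_{\alpha})}{\Delta_{1}(J_{\alpha})},
\]
and since $\kappa(X_n)$ is a positive integer we may take Archimedean absolute values, obtaining
\[
\kappa(X_n) \;=\; \kappa(X)\, n^{e-1}\, \frac{|\Delta_n(J_\alpha)|_\infty}{|\Delta_1(J_\alpha)|_\infty}.
\]

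Next, I would invoke the hypothesis that $J_\alpha$ has no root on the unit circle of $\mathbb{C}$. Under this assumption, the asymptotic \eqref{euclidean_growth_away_uc} applied to $f = J_\alpha$ yields
\[
|\Delta_n(J_\alpha)|_\infty \;\sim\; M_\infty(J_\alpha)^n \quad \text{as } n \to +\infty.
\]
Plugging this into the displayed equality above immediately gives
\[
\kappa(X_n) \;\sim\; n^{e-1}\, \frac{\kappa(X)}{|\Delta_1(J_\alpha)|_\infty}\, M_\infty(J_\alpha)^n.
\]

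To match the desired form of the asymptotic, it remains to identify $M_\infty(J_\alpha)$ with $M_\infty(I_\alpha)$. This follows from the factorization $I_\alpha(t) = (t-1)^e J_\alpha(t)$ of \eqref{eq:J_alpha}, the multiplicativity of the Archimedean Mahler measure, and the trivial computation $M_\infty(t-1) = 1$, which together yield $M_\infty(I_\alpha) = M_\infty(J_\alpha)$. Substituting this equality concludes the proof.

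Overall the argument is almost entirely formal; the only mild subtlety is the translation between $J_\alpha$ and $I_\alpha$ in the Mahler-measure computation, but since all extra roots introduced by the factor $(t-1)^e$ lie \emph{on} the unit circle and contribute trivially to the Mahler measure, no real obstacle arises. The step where one genuinely uses analytic input is the application of \eqref{euclidean_growth_away_uc}, which itself rests on the classical fact that the Archimedean absolute value of $\Delta_n(f)/a_d^n$ is controlled by $\prod_{|\alpha_i|_\infty > 1} |\alpha_i|_\infty^n$ once one shows that the factors $|\alpha_i^n - 1|_\infty$ coming from roots inside the open unit disc tend to $1$ as $n \to +\infty$.
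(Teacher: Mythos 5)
Your proposal is correct and follows essentially the same route as the paper, which proves this corollary by directly combining \eqref{eq:kN_Delta} with \eqref{euclidean_growth_away_uc}; the only detail you spell out beyond the paper's one-line proof is the identification $M_\infty(I_\alpha) = M_\infty(J_\alpha)$ via multiplicativity and $M_\infty(t-1)=1$, which is handled correctly.
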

\begin{proof}
This follows directly by combining \eqref{eq:kN_Delta} with \eqref{euclidean_growth_away_uc}.
\end{proof}

\begin{remark}
    Let us note that given an Ihara polynomial $\mathcal{I}_\alpha$ associated to some voltage assignment $\alpha$, either $M_\infty(\mathcal{I}_\alpha) = 1$ or $M_\infty(\mathcal{I}_\alpha) \geq 2$, as was proved in \cite[Proposition~12.7]{Silver_Williams_2021}.
\end{remark} 

\begin{remark} \label{rmk:Weyl}
    It is reasonable to ask what happens when the Ihara polynomial $\mathcal{I}_\alpha$ has some roots on the Archimedean unit circle. 
    In this case, it is easy to see that these roots will prevent one from getting a precise asymptotic for the growth of $\kappa(X_n)$.
    To see this, fix some $\alpha = e^{2 \pi i \theta} \in \mathbb{C}$ with $\theta \in \mathbb{R} \setminus \mathbb{Q}$. Then, the sequence \[\lvert \alpha^n - 1 \rvert_\infty^2 = 2 (1 - \cos(2 \pi n \theta))\] is distributed on the interval $[0,4]$ according to the probability density function $\frac{2}{\pi \sqrt{4 x-x^2}}$, thanks to Weyl's equidistribution theorem \cite[Chapter 1, Example 2.1]{Kuipers_Niederreiter_1974}, and to the explicit computation of the probability density function of the random variable $2 (1-\cos(2 \pi U))$, where $U$ is a random variable which is uniformly distributed in the interval $[0,1]$, which follows from the general transformation formula for probability density functions (see \cite[Theorem~3.8.4]{DeGroot_Schervish_2012}).
    Therefore, we see that any asymptotic expansion for the growth of $\kappa(X_n)$ would have to feature some oscillating term, which takes into account this equidistribution phenomenon. 
\end{remark}
\begin{comment}
It is reasonable to ask what happens when the Ihara polynomial $\mathcal{I}_\alpha$ has some roots on the Archimedean unit circle. 
    In this case, it is easy to see that these roots will prevent one from getting a precise asymptotic for the growth of $\kappa(X_n)$.
    To see this, fix some $\alpha = e^{i \theta} \in \mathbb{C}$ with $\theta \in \mathbb{R} \setminus \mathbb{Q}$. Then, the sequence \[\lvert \alpha^n - 1 \rvert_\infty^2 = 2 (1 - \cos(n \theta))\] is distributed on the interval $[0,4]$ according to the probability density function $\frac{2}{\pi \sqrt{4 x-x^2}}$, thanks to Weyl's equidistribution theorem.
    Therefore, we see that any asymptotic expansion for the growth of $\kappa(X_n)$ would have to feature some oscillating term, which takes into account this equidistribution phenomenon. 
\end{comment}

The previous \cref{cor:archimedean_asypmtotics} allows us to recover the asymptotics for the number of spanning trees of two particular examples of $\mathbb{Z}$-towers, which were thoroughly studied in \cite{Mednykh:2018} and \cite{Mednykh:2019}.

\begin{example} \label{ex:Mednykh_1}
Consider the following orientation $S = \{s_1,s_2,s_3\}$ on the dumbbell graph:

\begin{center}

\begin{tikzpicture}

%vertices
\draw[fill=black] (0,0) circle (1pt);
\draw[fill=black] (0.8,0) circle (1pt);

%edges
\draw (0,0) edge [decoration={markings, mark= at position 0.58 with {\arrow[xscale=1]{stealth}}},preaction={decorate}]  (0.8,0) node[above] at (0.4,0.08) {$s_{2}$};
\draw (0,0) edge [decoration={markings, mark= at position 0.28 with {\arrow{stealth}}},preaction = {decorate}, loop left, in = 155, out = 205,min distance=8mm] (0,0) node[above] at (-0.25,0.08) {$s_{1}$};
\draw (0.8,0) edge [decoration={markings, mark= at position 0.28 with {\arrow{stealth}}},preaction = {decorate}, loop right, in = 25, out = 335,min distance=8mm] (0.8,0) node[above] at (1.13,0.08) {$s_{3}$};
\end{tikzpicture}   
\end{center}
and fix a function $\alpha:S \rightarrow \mathbb{Z}$ such that $\alpha(s_2) = 0$.
This defines a voltage assignment on the dumbbell graph $X$, with values in $\mathbb{Z}$.
Moreover, the derived covers $X_n := X(\mathbb{Z}/n \mathbb{Z},\pi_n \circ \alpha)$, where $\pi_n \colon \mathbb{Z} \twoheadrightarrow \mathbb{Z}/n \mathbb{Z}$ denotes the canonical projection, are given by the $I$-graphs $I(n,k,l)$, where $k := \alpha(s_1)$ and $l := \alpha(s_3)$. In particular, if $k = 1$ one gets the family of generalized Petersen graphs $\mathrm{GP}(n,l)$.

Now, one sees from \cref{connectedness} that the graph $X(\mathbb{Z},\alpha)$ is connected if and only if $(k,l) = 1$, which we will assume for the rest of this example.
Then, we can compute the Ihara polynomial associated to the voltage assignment $\alpha$, which is given by
\[
\mathcal{I}_\alpha(t) = (3 - t^{k} - t^{-k})(3 - t^{l} - t^{-l}) - 1 = t^{-(k+l)} \cdot I_{\alpha}(t)
\]
where $I_{\alpha}(t) =  (3 t^k - t^{2 k} - 1) \cdot (3 t^l - t^{2 l} - 1) - t^{k+l}$.
Since $I_\alpha(1) = I'_\alpha(1) = 0$, while $I''_\alpha(1) \neq 0$, we see that $e := \ord_{t=1}(\mathcal{I}_\alpha) = 2$, which allows us to write 
\[
I_{\alpha}(t) = (t-1)^{2} \cdot J_{\alpha}(t)
\]
for some $J_\alpha \in \mathbb{Z}[t]$ such that $J_\alpha(1) \neq 0$.
A simple calculation shows that $J_\alpha$ has no roots on the unit circle, as explained for instance in \cite[Lemma~5.2]{Mednykh:2018}.
Moreover, it is easy to see that
\[
|\Delta_1(J_{\alpha})| = |J_{\alpha}(1)| = |I_{\alpha}^{''}(1)|/2 = k^2 + l^2,
\]
and that $\kappa(X) = 1$.
Therefore, \cref{cor:archimedean_asypmtotics} shows that
    \[
        \kappa(I(n,k,l)) \sim \frac{n}{k^2 + l^2} \cdot M_\infty(I_{\alpha})^n
    \]
    as $n \to \infty$, which is precisely \cite[Theorem 6.1]{Mednykh:2018}.    
\end{example}

\begin{example} \label{ex:Mednykh_2}
Consider the graph $X$ consisting of a bouquet with $k$ loops for some $k \in \mathbb{N}$, and take an orientation $S = \{s_{1},\ldots,s_{k} \}$ of $X$. 
Moreover, fix any function $\alpha:S \rightarrow \mathbb{Z}$ such that
$$1 \leq \alpha(s_1) < \alpha(s_2) < \ldots < \alpha(s_k), $$
and write $a_i := \alpha(s_i)$ for every $i \in \{1,\dots,k\}$.
Then, for $n$ large enough, the derived graph $X_n := X(\mathbb{Z}/n\mathbb{Z},\pi_n \circ \alpha)$ is the circulant graph $C_{n}(a_{1},\ldots,a_{k})$. 
Note in particular that the example described in \cref{sec:explicit_example_bouquet} belongs to this more general family.

Once again, it is easy to see by \cref{connectedness} that the graph $X(\mathbb{Z},\alpha)$ is connected if and only if $(a_{1},\ldots,a_{k}) = 1$, which we will assume for the rest of this example. Then, we can compute the Ihara polynomial associated to the voltage assignment $\alpha$, and we obtain 
\begin{equation*}
\mathcal{I}_{\alpha}(t) = 2k - \sum_{i=1}^{k}(t^{a_{i}} + t^{-a_{i}}) = t^{-a_k} I_\alpha(t)
\end{equation*}
where $I_\alpha(t) := 2 k t^{a_k} - \sum_{i=1}^k (t^{a_k+a_i} + t^{a_k-a_i}) \in \mathbb{Z}[t]$.
In particular, we easily see that 
\[I_{\alpha}(1) = I_{\alpha}'(1) = 0 \neq I_{\alpha}''(1),\] 
which implies that $e := \ord_{t = 1}(\mathcal{I}_\alpha) = 2$, and that 
$I_{\alpha}(t) = (t-1)^{2} \cdot J_{\alpha}(t)$,
for some $J_{\alpha} \in \mathbb{Z}[t]$ satisfying $J_{\alpha}(1) \neq 0$.
Moreover, one can easily show that $J_\alpha$ does not have any root on the unit circle of $\mathbb{C}$, as explained in \cite[Lemma~2]{Mednykh:2019}.
Finally, we see that $\kappa(X) = 1$ and 
\[
\lvert \Delta_1(J_{\alpha}) \rvert = \lvert J_{\alpha}(1) \rvert = \lvert I_{\alpha}^{''}(1) \rvert/2 = \sum_{i=1}^{k}a_{i}^{2}
\]
which, thanks to \cref{cor:archimedean_asypmtotics}, implies that
\[   
\kappa(C_{n}(a_{1},\ldots,a_{k})) \sim \frac{n}{\sum_{i=1}^{k}a_{i}^{2}} \cdot M_\infty(I_{\alpha})^n
\]
as $n \to \infty$, which is \cite[Theorem 5]{Mednykh:2019} in the particular case when $d=1$.       
\end{example}

\subsubsection*{$p$-adic asymptotics}

Let us look at the asymptotics of the $p$-adic valuations of the number of spanning trees in a $\mathbb{Z}$-tower, for a fixed prime $p$. As in the Archimedean setting, we start from the case when the Ihara polynomial  $\mathcal{I}_\alpha$ does not have any non-trivial root lying on the unit circle of $\mathbb{C}_p$.

\begin{corollary} \label{cor:p-adic_asymptotics}
Let $X$ be a finite connected graph such that $\chi(X) \neq 0$, and fix a voltage assignment $\alpha \colon \mathbf{E}_X \to \mathbb{Z}$ such that $X(\mathbb{Z},\alpha)$ is connected.
Fix moreover a rational prime $p \in \mathbb{N}$.
Then, if the polynomial $J_{\alpha}$ defined by \eqref{eq:J_alpha} has no root on the unit circle of $\mathbb{C}_{p}$, we have that
$$|\kappa(X_{n})|_{p} = |n|_{p}^{e-1}  \frac{|\kappa(X)|_{p}}{|\Delta_{1}(J_{\alpha})|_{p}} M_{p}(I_{\alpha})^{n}  $$
for every $n \in \mathbb{N}$, where $e := \mathrm{ord}_{t = 1}(\mathcal{I}_\alpha)$ is the order of vanishing at $t = 1$ of the Ihara polynomial associated to $\alpha$. 
\end{corollary}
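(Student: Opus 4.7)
The plan is to combine the exact formula for $\kappa(X_n)$ provided by \cref{thm:kN_Pierce_Lehmer} with the $p$-adic asymptotic for Pierce-Lehmer sequences established in \cref{cor:PL_p-adic_asymptotic}. More precisely, \cref{thm:kN_Pierce_Lehmer} gives
\[
\kappa(X_n) = (-1)^{b(n-1)} \cdot \kappa(X) \cdot n^{e-1} \cdot \frac{\Delta_n(J_\alpha)}{\Delta_1(J_\alpha)},
\]
so that taking $p$-adic absolute values yields
\[
|\kappa(X_n)|_p = |n|_p^{e-1} \cdot \frac{|\kappa(X)|_p}{|\Delta_1(J_\alpha)|_p} \cdot |\Delta_n(J_\alpha)|_p.
\]
Since by hypothesis $J_\alpha$ has no root on the unit circle of $\mathbb{C}_p$, \cref{cor:PL_p-adic_asymptotic} applies and gives the clean identity $|\Delta_n(J_\alpha)|_p = M_p(J_\alpha)^n$ for every $n \in \mathbb{N}$.

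To finish, I need to replace $M_p(J_\alpha)$ with $M_p(I_\alpha)$. This follows from the factorisation $I_\alpha(t) = (t-1)^e J_\alpha(t)$ together with the multiplicativity of the $p$-adic Mahler measure: Ueki's explicit formula \eqref{eq:Ueki_p_adic_Mahler} immediately yields $M_p(t-1) = \max\{|1|_p, |{-1}|_p\} = 1$, whence $M_p(I_\alpha) = M_p(t-1)^e \cdot M_p(J_\alpha) = M_p(J_\alpha)$. Substituting this back gives precisely the claimed equality. There is no real obstacle here; the corollary is essentially a formal consequence of the exact factorisation \eqref{eq:kN_Delta} combined with the fact that, in the absence of roots on the $p$-adic unit circle, the $p$-adic absolute value of a Pierce-Lehmer sequence is given exactly (not just asymptotically) by a power of the Mahler measure.
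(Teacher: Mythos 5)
Your proposal is correct and follows exactly the paper's route: combine the exact formula \eqref{eq:kN_Delta} from \cref{thm:kN_Pierce_Lehmer} with the identity \eqref{eq:p-adic_Pierce_Lehmer} from \cref{cor:PL_p-adic_asymptotic}. The only addition is that you make explicit the step $M_p(I_\alpha) = M_p(J_\alpha)$ via $M_p(t-1)=1$, which the paper leaves implicit; this is a correct and welcome clarification but not a different argument.
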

\begin{proof}
    This follows immediately by combining \eqref{eq:kN_Delta} with \eqref{eq:p-adic_Pierce_Lehmer}.
\end{proof}
\begin{remark}
    Let $f(t) = \sum_{j = 0}^d c_j t^j \in \mathbb{Z}[t]$ be any polynomial. Then, every root of $f$ lies in the unit circle of $\mathbb{C}_p$ whenever $p \nmid c_0 \cdot c_d$. Therefore, we see that, for every given $\mathbb{Z}$-tower of graphs, \cref{cor:p-adic_asymptotics} can be applied only for finitely many primes $p$.
\end{remark}

\subsection{Exact formulas for the \texorpdfstring{$p$}{p}-adic valuation of the number of spanning trees}
\label{sec:exact_formulas}

The previous remark prompts us to study the case when $J_\alpha$ has some roots on the unit circle of $\mathbb{C}_p$.
In this case, we can prove the following result (which is the more precise version of \cref{thm:main_thm_intro} from the Introduction), which gives a partial analogue of Iwasawa's theorem for $\mathbb{Z}$-towers.
 
\begin{theorem} \label{p_adic_val}
Let $X$ be a finite connected graph whose Euler characteristic $\chi(X)$ does not vanish, and let $\alpha \colon \mathbf{E}_X \to \mathbb{Z}$ be a voltage assignment such that $X(\mathbb{Z},\alpha)$ is connected. 
Let $\mathcal{I}_\alpha \in \mathbb{Z}[t]$ be the Ihara polynomial associated to $\alpha$, and set \[J_\alpha(t) := t^b (t-1)^{-e} \mathcal{I}_\alpha(t),\] where $b := -\ord_{t=0}(\mathcal{I}_\alpha)$, and $e := \ord_{t=1}(\mathcal{I}_\alpha)$.

Fix now a rational prime $p \in \mathbb{N}$, an algebraic closure $\overline{\mathbb{Q}}_p$ of the field of $p$-adic numbers, and let $\mathcal{O}_p$ be the ring of integers of $\overline{\mathbb{Q}}_p$. Using this notation, we can define the quantities
\[
\begin{aligned}
    \mu_p(X,\alpha) &:= -m_p(J_\alpha)/\log(p) \\
c_p(X,\alpha) &:= {\rm ord}_{p}(\kappa(X)) - {\rm ord}_{p}\left(\Delta_{1}(J_{\alpha}) \right)
\end{aligned}
\]
where $m_p(J_\alpha)$ denotes the logarithmic $p$-adic Mahler measure of $J_\alpha$, defined as in \eqref{eq:log_padic_m}.

Moreover, let $\beta_1,\dots,\beta_d \in \overline{\mathbb{Q}}_p$ be the $p$-adic roots of $J_\alpha$, counted with multiplicity. Then, for every $n \in \mathbb{N}$ we introduce the set 
\begin{equation} \label{eq:Spn}
    B_{p,n}(X,\alpha) := \{ \beta \in \overline{\mathbb{Q}}_p \colon J_\alpha(\beta) = 0, \ \lvert \beta \rvert_p = 1, \ \lvert \beta^n - 1 \rvert_p < 1 \},
\end{equation}
which can be used to define the quantity 
\[\lambda_{p,n}(X,\alpha) := \# \{ j \in \{1,\dots,d\} \colon \beta_j \in B_{p,n}(X,\alpha) \} + e - 1.\]

Finally, for every $\beta \in \mathcal{O}_p$ such that $\lvert \beta \rvert_p = 1$ we write
\begin{equation} \label{eq:rpn}
    s_p(\beta) := \min\{ s \in \mathbb{Z}_{\ge 0} \colon p^s (p-1) \ord_p(\beta - \tau_p(\pi_p(\beta))) > 1\}
\end{equation} 
and for every $n \in \mathbb{N}$ we set $r_{p,n}(\beta) := \min(\ord_p(n),s_p(\beta))$, where $\tau_p(\pi_p(\beta))$ denotes the Teichmüller lift of the reduction $\pi_p(\beta)$ of $\beta$ modulo the maximal ideal of $\mathcal{O}_p$.
This can be used to define the quantity
\[
\nu_{p,n}(X,\alpha) := 
    \sum_{\substack{j \in \{1,\dots,d\} \\ \beta_j \in B_{p,n}(X,\alpha)}} \left( \ord_p(\beta_j^{p^{r_{p,n}(\beta_j)}} - \tau_p(\pi_p(\beta_j))^{p^{r_{p,n}(\beta_j)}}) - r_{p,n}(\beta_j) \right).
\]
Then, we have
\[
    \ord_p(\kappa(X_n)) = \mu_p(X,\alpha) \cdot n + \lambda_{p,n}(X,\alpha) \cdot \ord_p(n) + \nu_{p,n}(X,\alpha) + c_p(X,\alpha)
\]
for every $n \in \mathbb{N}$.
\end{theorem}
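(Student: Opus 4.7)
The proof proposal is essentially to combine the two main technical ingredients already established in the paper: the exact formula for $\kappa(X_n)$ in terms of a Pierce-Lehmer sequence (Theorem \ref{thm:kN_Pierce_Lehmer}) and the explicit formula for the $p$-adic valuation of a Pierce-Lehmer sequence (Corollary \ref{cor:PL_p_adic_valuation}). So the plan is almost entirely bookkeeping: translate both sides to $p$-adic valuations and verify that the invariants match up on the nose.

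First, I would apply $\ord_p$ to the identity
\[
\kappa(X_n) = (-1)^{b(n-1)} \cdot \kappa(X) \cdot n^{e-1} \cdot \frac{\Delta_n(J_\alpha)}{\Delta_1(J_\alpha)}
\]
from Theorem \ref{thm:kN_Pierce_Lehmer}, which immediately yields
\[
\ord_p(\kappa(X_n)) = \ord_p(\kappa(X)) + (e-1) \ord_p(n) + \ord_p(\Delta_n(J_\alpha)) - \ord_p(\Delta_1(J_\alpha)).
\]
Here I should first point out that $J_\alpha$ satisfies the hypothesis of Corollary \ref{cor:PL_p_adic_valuation}, i.e.\ it does not vanish at any root of unity, as was noted in \cref{rmk:p_l_adic_graphs} (this uses the fact that $\kappa(X_n) \neq 0$ for all $n$, together with the displayed formula above).

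Next, I would invoke Corollary \ref{cor:PL_p_adic_valuation} applied to $f = J_\alpha$, which gives
\[
\ord_p(\Delta_n(J_\alpha)) = \mu_p(J_\alpha) \cdot n + \lambda_{p,n}(J_\alpha) \cdot \ord_p(n) + \nu_{p,n}(J_\alpha),
\]
with the invariants $\mu_p(J_\alpha)$, $\lambda_{p,n}(J_\alpha)$, and $\nu_{p,n}(J_\alpha)$ defined in \eqref{eq:mu_p}, \eqref{eq:lambdapn}, and as given in the statement of that corollary. Substituting this into the previous display and collecting terms according to their dependence on $n$, $\ord_p(n)$, and constants, I arrive at an expression of the required shape.

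The final step is the matching of invariants. By definition, $\mu_p(X,\alpha) = -m_p(J_\alpha)/\log(p) = \mu_p(J_\alpha)$, and $c_p(X,\alpha) = \ord_p(\kappa(X)) - \ord_p(\Delta_1(J_\alpha))$ absorbs exactly the two boundary terms above. The set $B_{p,n}(X,\alpha)$ defined in \eqref{eq:Spn} coincides with the set $B_{p,n}(J_\alpha)$ of \eqref{eq:Bpn}, since they have the same defining conditions applied to the same polynomial $J_\alpha$; consequently $\nu_{p,n}(X,\alpha) = \nu_{p,n}(J_\alpha)$, because the quantities $s_p(\beta)$ and $r_{p,n}(\beta)$ in \eqref{eq:rpn} agree with those used in Corollary \ref{cor:PL_p_adic_valuation}. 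Finally, the $(e-1) \ord_p(n)$ term from the factor $n^{e-1}$ combines with $\lambda_{p,n}(J_\alpha) \cdot \ord_p(n)$ to give precisely $\lambda_{p,n}(X,\alpha) \cdot \ord_p(n)$, where $\lambda_{p,n}(X,\alpha) = \lambda_{p,n}(J_\alpha) + e - 1$ by the definition in the statement.

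There is no real obstacle here: all the hard analytic work has been done in Theorem \ref{thm:PL_unit_circle} and its corollaries in Section \ref{sec:Pierce_Lehmer}, and Theorem \ref{thm:kN_Pierce_Lehmer} in the graph-theoretic section. The only thing to be slightly careful about is confirming that the factor $(-1)^{b(n-1)}$ is a unit at every prime $p$ (so it disappears under $\ord_p$) and that the various $\beta_j$ appearing in the definitions of $\lambda_{p,n}(X,\alpha)$ and $\nu_{p,n}(X,\alpha)$ in the statement are indeed the $p$-adic roots of $J_\alpha$ (counted with multiplicity), matching the notational conventions of Corollary \ref{cor:PL_p_adic_valuation}.
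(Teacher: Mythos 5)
Your proposal is correct and follows essentially the same route as the paper's own proof: apply $\ord_p$ to the identity of \cref{thm:kN_Pierce_Lehmer}, invoke \cref{cor:PL_p_adic_valuation} for $f = J_\alpha$, and match the invariants term by term (noting $B_{p,n}(X,\alpha) = B_{p,n}(J_\alpha)$ and $\lambda_{p,n}(X,\alpha) = \lambda_{p,n}(J_\alpha) + e - 1$). Your extra care in checking that $J_\alpha$ does not vanish at roots of unity (via \cref{rmk:p_l_adic_graphs}) and that the sign $(-1)^{b(n-1)}$ disappears under $\ord_p$ is a welcome addition that the paper leaves implicit.
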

\begin{proof}
From \cref{thm:kN_Pierce_Lehmer}, one has the identity
\begin{equation} \label{eq:graph_to_PL}
    {\rm ord}_{p}(\kappa(X_{n})) = {\rm ord}_{p}(\Delta_{n}(J_{\alpha})) + (e-1) \cdot {\rm ord}_{p}(n) + c_p(X,\alpha).
\end{equation}
Moreover, \cref{cor:PL_p_adic_valuation} implies that
\begin{equation} \label{eq:PL_p_adic_valuation}
    \ord_p(\Delta_n(J_\alpha)) = \mu_p(X,\alpha) \cdot n + A_{p,n}(X,\alpha) \cdot \ord_p(n) + \nu_{p,n}(X,\alpha),
\end{equation}
where $A_{p,n}(X,\alpha) := \# \{ j \in \{1,\dots,d\} \colon \beta_j \in B_{p,n}(X,\alpha) \}$,
because $B_{p,n}(X,\alpha) = B_{p,n}(J_\alpha)$ and $\nu_{p,n}(X,\alpha) = \nu_{p,n}(J_\alpha)$ by definition. Therefore, we can conclude by combining \eqref{eq:PL_p_adic_valuation} with \eqref{eq:graph_to_PL}.
\end{proof}

Using \cref{p_adic_val}, one can easily show that once we fixed the base graph $X$, the voltage assignment $\alpha \colon \mathbf{E}_X \to \mathbb{Z}$ and the prime number $p$, we can subdivide $\mathbb{N}$ in a finite number of sequences, given by imposing certain divisibility conditions.
Along each of these sequences, the invariant $\ord_p(\kappa(X_n))$ can be computed as a linear form in $n$ and $\ord_p(n)$, as we show more precisely in the following theorem.

\begin{theorem}
    Let $X$ be a finite connected graph whose Euler characteristic $\chi(X)$ does not vanish, and $\alpha \colon \mathbf{E}_X \to \mathbb{Z}$ be a voltage assignment such that for every $n \geq 1$ the finite graph \[X_n := X(\mathbb{Z}/n \mathbb{Z},\alpha_n)\] is connected (which can be checked using \cref{connectedness}). 
    Moreover, for every prime $p \in \mathbb{Z}$ we write 
    \[\mu_p(X,\alpha) := -m_p(\mathcal{I}_\alpha)/\log(p),\] where $m_p(\mathcal{I}_\alpha)$ denotes the logarithmic $p$-adic Mahler measure of the Ihara polynomial $\mathcal{I}_\alpha$. 
    Then, for every rational prime $p$ there exist a finite set $\mathcal{N}_p(X,\alpha) \subseteq \mathbb{N}$ of integers coprime to $p$, and an integer $R_p(X,\alpha) \geq 0$ such that for every $\mathfrak{n} \subseteq \mathcal{N}_p(X,\alpha)$ and every $r \in \{0,\dots,R_p(X,\alpha)\}$ there exist two integers $\lambda_p(X,\alpha,\mathfrak{n}) \geq 0$ and $\nu_p(X,\alpha,\mathfrak{n},r)$ such that
    \begin{equation} \label{eq:main_formula}
        \ord_p(\kappa(X_n)) = \mu_p(X,\alpha) \cdot n + \lambda_p(X,\alpha,\mathfrak{n}) \cdot \ord_p(n) + \nu_p(X,\alpha,\mathfrak{n},r)
    \end{equation}
    for every $n \in \mathcal{S}_p(X,\alpha,\mathfrak{n},r)$, where $\mathcal{S}_p(X,\alpha,\mathfrak{n},r)$ consists of those $n \in \mathbb{N}$ such that: 
    \begin{itemize}
        \item $N \mid n$ for each $N \in \mathfrak{n}$;
        \item $N' \nmid n$ for each $N' \in \mathcal{N}_p(X,\alpha) \setminus \mathfrak{n}$;
        \item $\ord_p(n) = r$, if $r < R_p(X,\alpha)$, or $\ord_p(n) \geq R_p(X,\alpha)$ if $r = R_p(X,\alpha)$.
    \end{itemize}
    Moreover, the finite set $\mathcal{N}_p(X,\alpha)$, the integer $R_p(X,\alpha)$ and the two invariants $\lambda_p(X,\alpha,\mathfrak{n})$ and $\nu_p(X,\alpha,\mathfrak{n},r)$ can be explicitly computed in terms of the polynomial $\mathcal{I}_\alpha$. 
\end{theorem}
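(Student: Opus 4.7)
The plan is to apply \cref{p_adic_val} and then analyze how the quantities $\lambda_{p,n}(X,\alpha)$ and $\nu_{p,n}(X,\alpha)$ vary with $n$, grouping the positive integers into finitely many classes on which these quantities are constant. As a preliminary, one has $m_p(\mathcal{I}_\alpha) = m_p(J_\alpha)$, because $I_\alpha(t) = t^b \mathcal{I}_\alpha(t) = (t-1)^e J_\alpha(t)$ and the $p$-adic Mahler measure is multiplicative with $m_p(t-1) = 0$; hence the $\mu_p$ appearing in the present statement coincides with the one used in \cref{p_adic_val}.

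Let $\beta_1,\dots,\beta_d \in \overline{\mathbb{Q}}_p$ denote the $p$-adic roots of $J_\alpha$ counted with multiplicity, and let $\mathcal{U}_p$ denote the set of indices $j$ such that $|\beta_j|_p = 1$. For each $j \in \mathcal{U}_p$, write $N(\beta_j)$ for the multiplicative order of $\pi_p(\beta_j) \in \overline{\mathbb{F}}_p^\times$, which is automatically coprime to $p$. I would then set
\[
\mathcal{N}_p(X,\alpha) := \{N(\beta_j) : j \in \mathcal{U}_p\}, \qquad R_p(X,\alpha) := \max\{s_p(\beta_j) : j \in \mathcal{U}_p\},
\]
with the convention that the maximum over the empty set is $0$, and $s_p(\beta_j)$ is the quantity introduced in \eqref{eq:rpn}. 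Both sets are manifestly finite and computable from $J_\alpha$.

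The key observation, already at the heart of the proof of \cref{thm:PL_unit_circle}, is that for $\beta \in \overline{\mathbb{Q}}_p$ with $|\beta|_p = 1$ one has $|\beta^n - 1|_p < 1$ if and only if $N(\beta) \mid n$. Therefore, setting $\mathfrak{n} := \{N \in \mathcal{N}_p(X,\alpha) : N \mid n\}$, the set $B_{p,n}(X,\alpha)$ from \eqref{eq:Spn} equals $\{\beta_j : j \in \mathcal{U}_p,\ N(\beta_j) \in \mathfrak{n}\}$ and depends on $n$ only through $\mathfrak{n}$. Consequently $\lambda_{p,n}(X,\alpha)$ is a function of $\mathfrak{n}$ alone, and one defines
\[
\lambda_p(X,\alpha,\mathfrak{n}) := \#\{j \in \mathcal{U}_p : N(\beta_j) \in \mathfrak{n}\} + e - 1,
\]
which is non-negative because $e \geq 1$ (as noted in the proof of \cref{thm:kN_Pierce_Lehmer}).

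For $\nu_{p,n}(X,\alpha)$, each summand depends on $n$ only through $r_{p,n}(\beta_j) = \min(\ord_p(n), s_p(\beta_j))$: if $\ord_p(n) \geq R_p(X,\alpha)$ then this equals $s_p(\beta_j)$ for every $j \in \mathcal{U}_p$, while if $\ord_p(n) = r < R_p(X,\alpha)$ it equals $\min(r, s_p(\beta_j))$, independent of the remaining structure of $n$. Accordingly, I would define $\nu_p(X,\alpha,\mathfrak{n},r)$ to be the common value of $\nu_{p,n}(X,\alpha) + c_p(X,\alpha)$ taken over $n \in \mathcal{S}_p(X,\alpha,\mathfrak{n},r)$, which is well-defined by the above discussion. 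Substituting these constants into the formula furnished by \cref{p_adic_val} yields \eqref{eq:main_formula}. No serious obstacle arises: the arithmetic content is already established in \cref{p_adic_val} and in the analysis of $|\beta^n-1|_p$ carried out in \cref{thm:PL_unit_circle}, and what remains is the combinatorial bookkeeping needed to package the two regimes ``$\ord_p(n) < R_p(X,\alpha)$'' and ``$\ord_p(n) \geq R_p(X,\alpha)$'' uniformly via the index $r \in \{0,\dots,R_p(X,\alpha)\}$, with $r = R_p(X,\alpha)$ serving as the tail case.
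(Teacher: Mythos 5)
Your proposal is correct and follows essentially the same route as the paper: both proofs take $\mathcal{N}_p$ to be the set of multiplicative orders of the reductions of the unit-circle roots of $J_\alpha$, take $R_p = \max_j s_p(\beta_j)$, and then observe that $B_{p,n}(X,\alpha)$ and the exponents $r_{p,n}(\beta_j)$ from \cref{p_adic_val} depend on $n$ only through the divisibility class encoded by $(\mathfrak{n},r)$. The only differences are cosmetic and in your favour: the paper splits into cases according to whether $p \mid \mathrm{Disc}(J_\alpha)$ and whether $p=2$ in order to write $\nu_p(X,\alpha,\mathfrak{n},r)$ out explicitly, whereas you treat all cases uniformly, you count the roots contributing to $\lambda_p$ with multiplicity (matching $\lambda_{p,n}$ of \cref{p_adic_val} more faithfully than the paper's $\#\mathfrak{n}$), and you explicitly check that $m_p(\mathcal{I}_\alpha)=m_p(J_\alpha)$, a point the paper leaves implicit.
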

\begin{proof}
Fix a finite connected graph $X$ and a voltage assignment $\alpha \colon \mathbf{E}_X \to \mathbb{Z}$. 
    Moreover, fix a rational prime $p$ and let $\beta_1,\dots,\beta_d$ denote the roots of $J_\alpha$, counted with multiplicity, which lie on the unit circle of $\mathbb{C}_p$, and let $N_1,\dots,N_d$ denote the multiplicative orders of $\pi_p(\beta_1),\dots,\pi_p(\beta_d)$ in $\overline{\mathbb{F}}_p^\times$.
    Then, we can take
    \[
        \begin{aligned}
            \mathcal{N}_p(X,\alpha) &:= \{N_1,\dots,N_d\} \\
            R_p(X,\alpha) &:= \max_{j=1,\dots,d} s_p(\beta_j),
        \end{aligned}
    \]
    where $s_p(\beta_j)$ is defined as in \eqref{eq:sp_beta}. Moreover, for every subset $\mathfrak{n} \subseteq \mathcal{N}_p(X,\alpha)$ we can take 
    \[
        \lambda_p(X,\alpha,\mathfrak{n}) := \# \mathfrak{n} + e - 1,
    \]
    as we will now show.

    First of all, suppose that $\mathfrak{n} = \emptyset$. In other words, take an integer $n \in \mathbb{N}$ which is not divisible by any of the integers $N_1,\dots,N_d$. Then, the set $B_{p,n}(X,\alpha)$ defined in \eqref{eq:Spn} is empty.
    Therefore, \cref{p_adic_val} shows that for every $r \in \{0,\dots,R_p(X,\alpha)\}$ we can take
    \[         \nu_p(X,\alpha,\emptyset,r) := c_p(X,\alpha)
    \]
    and \eqref{eq:main_formula} will hold true.

    On the other hand, let us suppose that $\mathfrak{n} \neq \emptyset$, and let $J \subseteq \{1,\dots,d\}$ be the unique non-empty subset such that $\mathfrak{n} = \{N_j \colon j \in J \}$. 
    Then, if we suppose in addition that $p \neq 2$ and $p \nmid \mathrm{Disc}(J_\alpha)$, we have that $B_{p,n}(X,\alpha) = \{ \beta_j \colon j \in J \}$.
    Moreover, the quantity $r_{p,n}(\beta)$, which was defined in \eqref{eq:rpn}, vanishes whenever $\beta \in B_{p,n}(X,\alpha)$, because $\ord_p(\beta - \tau_p(\pi_p(\beta))) \in \mathbb{N}$ and $p-1 > 1$ in this case.
    Therefore, \cref{p_adic_val} shows that for every prime $p \neq 2$ such that $p \nmid \mathrm{Disc}(J_\alpha)$, and every $r \in \{0,\dots,R_p(X,\alpha)\}$, we can take
    \[
        \nu_p(X,\alpha,\mathfrak{n},r) := c_p(X,\alpha) + \sum_{j \in J} \ord_p(\beta_j - \tau_p(\pi_p(\beta_j))),
    \]
    and \eqref{eq:main_formula} will hold true.
    On the other hand, if $p = 2$ and $\mathrm{Disc}(J_\alpha)$ is odd, we still have that $B_{p,n}(X,\alpha) = \{\beta_j \colon j \in J\}$ and $\ord_p(\beta - \tau_p(\pi_p(\beta))) \in \mathbb{N}$ for every $\beta \in B_{p,n}(X,\alpha)$.
    Therefore, we see that for every $\beta \in B_{p,n}(X,\alpha)$ we have $r_{p,n}(\beta) = 0$ if $2 \nmid n$, and $r_{p,n}(\beta) \leq 1$ otherwise, because $\mathrm{Disc}(J_\alpha)$ is assumed to be odd.
    In other words, if $r = 0$ we can take
    \begin{equation} \label{eq:2-adic-valuation}
        \nu_2(X,\alpha,\mathfrak{n},0) := c_2(X,\alpha) +
        \sum_{j \in J} \ord_2(\beta_j - \tau_2(\pi_2(\beta_j))),    \end{equation}
    while if $r \geq 1$ we can take
    \[
        \begin{aligned}
            \nu_2(X,\alpha,\mathfrak{n},r) &:= c_2(X,\alpha) \\ &+ \sum_{\substack{j \in J \\ \mathrm{ord}_2(\beta_j - \tau_2(\pi_2(\beta_j))) = 1}} (\ord_2(\beta_j^2 - \tau_2(\pi_2(\beta_j))^2) - 1) \\ &+
        \sum_{\substack{j \in J \\ \mathrm{ord}_2(\beta_j - \tau_2(\pi_2(\beta_j))) \geq 2}} \ord_2(\beta_j - \tau_2(\pi_2(\beta_j)))
        \end{aligned}
    \]
    and \cref{p_adic_val} will ensure that \eqref{eq:main_formula} holds true when $p = 2$.

    To conclude, let us assume that $p \mid \mathrm{Disc}(J_\alpha)$, and let again $\mathfrak{n} = \{N_j \colon j \in J\}$ for some non-empty $J \subseteq \{1,\dots,d\}$, so that $B_{p,n}(X,\alpha) = \{\beta_j \colon j \in J\}$.
    Then, if $r = R_p(X,\alpha)$, which implies that $\ord_p(n) \geq R_{p}(X,\alpha)$, we see that
    \[
        r_{p,n}(\beta) = s_p(\beta) := \min\{ s \in \mathbb{Z}_{\geq 0} \colon p^s (p-1) \ord_p(\beta - \tau_p(\pi_p(\beta))) > 1 \}
    \]
    for every $\beta \in B_{p,n}(X,\alpha)$.
    Therefore, \cref{p_adic_val} guarantees that if we take
    \[
        \nu_p(X,\alpha,\mathfrak{n},R_p(X,\alpha)) := c_p(X,\alpha) + \sum_{j \in J} \left( \ord_p\left( \beta_j^{p^{s_p(\beta_j)}} - \tau_p(\pi_p(\beta_j))^{p^{s_p(\beta_j)}} \right) - s_p(\beta_j) \right),
    \]
    the identity \eqref{eq:main_formula} will hold true.
    Finally, if we fix $r \in \{0,\dots,R_p(X,\alpha) - 1\}$, we can take $\nu_p(X,\alpha,\mathfrak{n},r)$ to be
    \[
         c_p(X,\alpha) + \sum_{j \in J} \left( \ord_p\left( \beta_j^{p^{\min(r,s_p(\beta_j))}} - \tau_p(\pi_p(\beta_j))^{p^{\min(r,s_p(\beta_j))}} \right) - \min(r,s_p(\beta_j)) \right)
    \]
    and \cref{p_adic_val} still guarantees that \eqref{eq:main_formula} holds true.
\end{proof}
\begin{remark}
    The previous proof shows that the $p$-adic valuation of the number of spanning trees is actually constant along many of the sequences $\mathcal{S}_p(X,\alpha,\mathfrak{n},r)$.
    More precisely, let $\lVert J_\alpha \rVert$ be the greatest common divisor of the coefficients of $J_\alpha$. Then, if $p$ is a prime such that 
    \[p \nmid \mathrm{Disc}(J_\alpha) \cdot \lVert J_\alpha \rVert,\] for every $\mathfrak{n} = \{N_j \colon j \in J\} \subseteq \mathcal{N}_p(X,\alpha)$ we have that
    \[
        \ord_p(\kappa(X_n)) = c_p(X,\alpha) + \sum_{j \in J} \ord_p(\beta_j - \tau_p(\pi_p(\beta_j)))
    \]
    whenever $n \in \mathcal{S}_p(X,\alpha,\mathfrak{n},0)$.
\end{remark}

\begin{remark}
    It is clear from the proof of \cref{thm:main_thm_intro} that the multiplicative orders $N_1,\dots,N_d$ play a crucial role in the understanding of the evolution of the $p$-adic valuation of the number of spanning trees along a $\mathbb{Z}$-tower.
    Therefore, it would be nice to know how these orders vary with the prime number $p$.
    This can be understood in terms of a far reaching generalization of Artin's primitive root conjecture, due to Lenstra \cite{Lenstra_1977}, which is known to hold under the assumption of the generalized Riemann hypothesis. 
\end{remark}

% \begin{figure}[b]
%     \centering
%     \input{fibonacci_diagram}
%     \caption{The start of the Fibonacci tower, considered in \cref{sec:fibonacci}.}
%     \label{fig:fibonacci_tower}
% \end{figure}

\subsection{The Fibonacci tower} \label{sec:fibonacci}
To conclude this paper, let us note how the formula provided by \cref{p_adic_val} generalizes a famous formula for the $p$-adic valuation of the Fibonacci numbers, due to Lengyel \cite{Lengyel:1995}.  More precisely, if in \cref{ex:Mednykh_2}, we take the bouquet on two loops $X$, with an orientation $S = \{s_1,s_2\}$, and we let $\alpha$ be the unique voltage assignment $\alpha:\mathbf{E}_{X} \rightarrow \mathbb{Z}$ such that $\alpha(s_1) = 1$ and $\alpha(s_2) = 2$, then we obtain the $\mathbb{Z}$-tower portrayed in the following figure: 
\begin{equation*} 
\begin{tikzcd}[sep=1.8em, font=\small,scale cd=0.8,nodes in empty cells]
%& & & \tikzfig{prova} \arrow[dlll,dashed] \arrow[ddll,dashed] \arrow[dl,dashed] \arrow[dddd,dashed] \arrow[ddr,dashed] \arrow[ddrr,dashed] \arrow[dddrrr,dashed] & & & \\
\vdots \arrow[d] & & \vdots \arrow[d]& & & & \\
\begin{tikzpicture}[baseline={([yshift=-0.6ex] current bounding box.center)}]
% create the node
\node[draw=none,minimum size=1cm,regular polygon,regular polygon sides=8] (a) {};

% draw a black dot in each vertex
\foreach \x in {1,2,...,8}
  \fill (a.corner \x) circle[radius=0.7pt];
  
\foreach \y\z in {1/2,2/3,3/4,4/5,5/6,6/7,7/8,8/1}
  \path (a.corner \y) edge  (a.corner \z);
  
\foreach \y\z in {1/3,2/4,3/5,4/6,5/7,6/8,7/1,8/2}
  \path (a.corner \y) edge  (a.corner \z);
\end{tikzpicture} \arrow[d]& \vdots \arrow[d] & \begin{tikzpicture}[baseline={([yshift=-0.6ex] current bounding box.center)}]
% create the node
\node[draw=none,minimum size=1cm,regular polygon,regular polygon sides=27] (a) {};

% draw a black dot in each vertex
\foreach \x in {1,2,...,27}
  \fill (a.corner \x) circle[radius=0.7pt];
  
\foreach \y\z in {1/2,2/3,3/4,4/5,5/6,6/7,7/8,8/9,9/10,10/11,11/12,12/13,13/14,14/15,15/16,16/17,17/18,18/19,19/20,20/21,21/22,22/23,23/24,24/25,25/26,26/27,27/1}
  \path (a.corner \y) edge  (a.corner \z);
  
\foreach \y\z in {1/3,2/4,3/5,4/6,5/7,6/8,7/9,8/10,9/11,10/12,11/13,12/14,13/15,14/16,15/17,16/18,17/19,18/20,19/21,20/22,21/23,22/24,23/25,24/26,25/27,26/1,27/2}
  \path (a.corner \y) edge  (a.corner \z);

\end{tikzpicture} \arrow[d]& & \vdots \arrow[d] & \vdots \arrow[d] & \hspace{1cm} \\
\begin{tikzpicture}[baseline={([yshift=-0.6ex] current bounding box.center)}]
% create the node
\node[draw=none,minimum size=1cm,regular polygon,regular polygon sides=4] (a) {};

% draw a black dot in each vertex
\foreach \x in {1,2,...,4}
  \fill (a.corner \x) circle[radius=0.7pt];

\path (a.corner 1) edge (a.corner 2);
\path (a.corner 1) edge [bend right=10] (a.corner 3);
\path (a.corner 2) edge  (a.corner 3);
\path (a.corner 2) edge [bend right=10] (a.corner 4);
\path (a.corner 3) edge  (a.corner 4);
\path (a.corner 2) edge [bend left=10] (a.corner 4);
\path (a.corner 4) edge  (a.corner 1);
\path (a.corner 1) edge [bend left=10] (a.corner 3);
\end{tikzpicture} \arrow[d]& \begin{tikzpicture}[baseline={([yshift=-0.6ex] current bounding box.center)}]
% create the node
\node[draw=none,minimum size=1cm,regular polygon,regular polygon sides=6] (a) {};

% draw a black dot in each vertex
\foreach \x in {1,2,...,6}
  \fill (a.corner \x) circle[radius=0.7pt];
  
\foreach \y\z in {1/2,2/3,3/4,4/5,5/6,6/1}
  \path (a.corner \y) edge  (a.corner \z);
  
\foreach \y\z in {1/3,2/4,3/5,4/6,5/1,6/2}
  \path (a.corner \y) edge   (a.corner \z);

\end{tikzpicture} \arrow[dl] \arrow[dr]& \begin{tikzpicture}[baseline={([yshift=-0.6ex] current bounding box.center)}]
% create the node
\node[draw=none,minimum size=1cm,regular polygon,regular polygon sides=9] (a) {};

% draw a black dot in each vertex
\foreach \x in {1,2,...,9}
  \fill (a.corner \x) circle[radius=0.7pt];
  
\foreach \y\z in {1/2,2/3,3/4,4/5,5/6,6/7,7/8,8/9,9/1}
  \path (a.corner \y) edge  (a.corner \z);
  
\foreach \y\z in {1/3,2/4,3/5,4/6,5/7,6/8,7/9,8/1,9/2}
  \path (a.corner \y) edge   (a.corner \z);

\end{tikzpicture} \arrow[d]& \hspace{1cm} & \begin{tikzpicture}[baseline={([yshift=-0.6ex] current bounding box.center)}]
% create the node
\node[draw=none,minimum size=1cm,regular polygon,regular polygon sides=10] (a) {};

% draw a black dot in each vertex
\foreach \x in {1,2,...,10}
  \fill (a.corner \x) circle[radius=0.7pt];
  
\foreach \y\z in {1/2,2/3,3/4,4/5,5/6,6/7,7/8,8/9,9/10,10/1}
  \path (a.corner \y) edge (a.corner \z);
  
\foreach \y\z in {1/3,2/4,3/5,4/6,5/7}
  \path (a.corner \y) edge  (a.corner \z);
  
\foreach \y\z in {6/8,7/9,8/10,9/1,10/2}
  \path (a.corner \y) edge  (a.corner \z);

\end{tikzpicture} \arrow[dllll] \arrow[dr]& \begin{tikzpicture}[baseline={([yshift=-0.6ex] current bounding box.center)}]
% create the node
\node[draw=none,minimum size=1cm,regular polygon,regular polygon sides=25] (a) {};

% draw a black dot in each vertex
\foreach \x in {1,2,...,25}
  \fill (a.corner \x) circle[radius=0.7pt];
  
\foreach \y\z in {1/2,2/3,3/4,4/5,5/6,6/7,7/8,8/9,9/10,10/11,11/12,12/13,13/14,14/15,15/16,16/17,17/18,18/19,19/20,20/21,21/22,22/23,23/24,24/25,25/1}
  \path (a.corner \y) edge  (a.corner \z);
  
\foreach \y\z in {1/3,2/4,3/5,4/6,5/7,6/8,7/9,8/10,9/11,10/12,11/13,12/14,13/15,14/16,15/17,16/18,17/19,18/20,19/21,20/22,21/23,22/24,23/25,24/1,25/2}
  \path (a.corner \y) edge  (a.corner \z);

\end{tikzpicture} \arrow[d] & \vdots \arrow[d] \\
% I need to change this graph
\begin{tikzpicture}

%vertices
\draw[fill=black] (0,0) circle (0.7pt);
\draw[fill=black] (0.6,0) circle (0.7pt);

%edges
\draw (0,0) edge [bend left=20] (0.6,0);
\draw (0,0) edge [bend right=20] (0.6,0);
\draw (0,0) edge [loop left, in = 155, out = 205,min distance=4mm] (0,0) ;
\draw (0.6,0) edge [loop right, in = 25, out = 335,min distance=4mm] (0.6,0) ;
\end{tikzpicture} \arrow[drrr]& \hspace{1cm}& \begin{tikzpicture}[baseline={([yshift=-0.6ex] current bounding box.center)}]
% create the node
\node[draw=none,minimum size=1cm,regular polygon,regular polygon sides=3] (a) {};

% draw a black dot in each vertex
\foreach \x in {1,2,...,3}
  \fill (a.corner \x) circle[radius=0.7pt];
  
\foreach \y\z in {1/2,2/3,3/1}
  \path (a.corner \y) edge [bend left=10] (a.corner \z);
\foreach \y\z in {1/3,2/1,3/2}
  \path (a.corner \y) edge [bend left=10] (a.corner \z);
\end{tikzpicture}  \arrow[dr]& \hspace{1cm}& \hspace{1cm}&\begin{tikzpicture}[baseline={([yshift=-0.6ex] current bounding box.center)}]
% create the node
\node[draw=none,minimum size=1cm,regular polygon,regular polygon sides=5] (a) {};

% draw a black dot in each vertex
\foreach \x in {1,2,...,5}
  \fill (a.corner \x) circle[radius=0.7pt];
  
\foreach \y\z in {1/2,2/3,3/4,4/5,5/1}
  \path (a.corner \y) edge (a.corner \z);
\foreach \y\z in {1/3,2/4,3/5,4/1,5/2}
  \path (a.corner \y) edge (a.corner \z);

\end{tikzpicture} \arrow[dll] & \begin{tikzpicture}[baseline={([yshift=-0.6ex] current bounding box.center)}]
% create the node
\node[draw=none,minimum size=1cm,regular polygon,regular polygon sides=7] (a) {};

% draw a black dot in each vertex
\foreach \x in {1,2,...,7}
  \fill (a.corner \x) circle[radius=0.7pt];
  
\foreach \y\z in {1/2,2/3,3/4,4/5,5/6,6/7,7/1}
  \path (a.corner \y) edge  (a.corner \z);
  
\foreach \y\z in {1/3,2/4,3/5,4/6,5/7,6/1,7/2}
  \path (a.corner \y) edge   (a.corner \z);

\end{tikzpicture} \arrow[dlll] \\
\hspace{1cm}& \hspace{1cm}& \hspace{1cm}& \begin{tikzpicture}[baseline={([yshift=-1.7ex] current bounding box.center)}]
% create the node
\node[draw=none,minimum size=1cm,regular polygon,regular polygon sides=1] (a) {};
% draw a black dot in each vertex
\foreach \x in {1}
  \fill (a.corner \x) circle[radius=0.7pt];
\draw (a.corner 1) to [in=50,out=130,distance = 0.5cm,loop] (a.corner 1);
\draw (a.corner 1) to [in=50,out=130,distance = 0.3cm,loop] (a.corner 1);
\end{tikzpicture}&\hspace{1cm} &\hspace{1cm} & \hspace{1cm}
\end{tikzcd}
\end{equation*}

It turns out that the number of spanning trees of the finite layers of this tower is intimately related to the sequence of Fibonacci numbers. To show this, let us observe that $\kappa(X) = 1$ and 
\[\mathcal{I}_\alpha(t) := 4 - (t+t^{-1}) - (t^2+t^{-2}),\] 
which implies that $e = 2$ and $J_\alpha(t) = -(t^2 + 3 t + 1)$.
We denote by $\beta_1 = \frac{-3 - \sqrt{5}}{2}$ and $\beta_2 = \frac{-3+\sqrt{5}}{2}$ the roots of $J_\alpha$, and we observe that $\Delta_1(J_\alpha) = -5$. 

Then, \cref{thm:kN_Pierce_Lehmer} can be combined with a simple computation to show that 
\[
\kappa(X_{n}) = n \frac{\Delta_{n}(J_{\alpha})}{\Delta_{1}(J_{\alpha})} = n \frac{(-1)^{n}(\beta_{1}^{n} - 1)(\beta_{2}^{n} - 1)}{-5} = n F_{n}^{2},
\]
where $F_{n}$ is the $n$th Fibonacci number.  Therefore, \cref{p_adic_val} implies that
\[
    2 \cdot \ord_p(F_n) = \ord_p(\kappa(X_n)) - \ord_p(n) = \# B_{p,n}(X,\alpha) \cdot \ord_p(n) + \nu_{p,n}(X,\alpha)
\]
for every prime $p \neq 5$ and every $n \in \mathbb{N}$, because $m_p(J_\alpha) = 0$ for every prime $p \in \mathbb{N}$.

Now, let us note that the two roots $\beta_1$ and $\beta_2$ of the polynomial $J_\alpha$ are both reciprocal units, which implies that the set $B_{p,n}(X,\alpha)$ is either empty or consists of the two roots $\{\beta_1,\beta_2\}$.
The latter scenario occurs if and only if $n$ is a multiple of the multiplicative order $N_p$ of $\beta_1$ (and $\beta_2$) in $\overline{\mathbb{F}}_p^\times$.
Therefore, \cref{p_adic_val} shows that $N_p$ coincides with the so called rank of apparition of the prime $p$, \textit{i.e.} with the smallest index $n$ such that $p \mid F_n$.
Since there exists $k \in \mathbb{N}$ such that $N_p \mid p^k - 1$, we see that $\ord_p(N_p) = 0$, and thus that $\sum_{j=1}^2 \ord_p(\beta_j - \tau_p(\pi_p(\beta_j))) = 2 \cdot \ord_p(F_{N_p})$, provided one assumes moreover that $p \neq 2$. 
Hence, these considerations entail that
\begin{equation*}
    \ord_p(F_n) = \begin{cases}
    \ord_p(n) + \ord_p(F_{N_p}), \ \text{if} \ N_p \mid n, \\
    0, \ \text{otherwise},
\end{cases}
\end{equation*}
for every $p \neq 2,5$, as was proven in \cite[\S~3]{Lengyel:1995}.

To conclude this example, and this paper, let us see what happens when $p = 2$ and $p = 5$.  In the first case, when $p=2$, we can observe that $J_\alpha$ has no roots in $\mathbb{F}_2$, which implies necessarily that $N_2 = 3$. Moreover, in order to compute the Teichmüller representatives $\tau_2(\pi_2(\beta_j))$, for $j \in \{1,2\}$, we can work globally, and consider the number field $K = \mathbb{Q}(\sqrt{5},\zeta_3)$, where $\zeta_3$ is a primitive third root of unity.  A simple calculation shows that $2$ decomposes as a product of two primes in $K$ both with inertia degree $2$ and ramification degree $1$.  Using \textsc{SageMath} \cite{SAGE}, or by hand, one calculates that $(\beta_1 - \zeta_{3}) = \mathfrak{p}$, where $\mathfrak{p}$ is one of the two primes lying above $2$.  If we denote the other prime lying above $2$ by $\mathfrak{q}$, then we also have $(\beta_1 - \zeta_{3}^{2}) = \mathfrak{q}$.  Moreover, we have $(\beta_1^{2} - \zeta_{3}^{2}) = \mathfrak{p}^{3}$, and $(\beta_1^{2} - \zeta_{3}^{4}) = \mathfrak{q}^{3}$.  A similar calculation can be performed for $\beta_{2}$.  After embedding $K$ into $\overline{\mathbb{Q}}_{2}$ with any embedding, these calculations show that $N_{2} = 3$ and 
$$\ord_2(\beta_j - \tau_2(\pi_2(\beta_j))) = 1, \ord_2(\beta_j^2 - \tau_2(\pi_2(\beta_j^2))) = 3, $$
for every $j \in \{1,2\}$.  Finally, one can check easily that $\ord_2(F_3) = 1$. Thus, \eqref{eq:rpn} implies that
\[
\ord_2(F_n) = \begin{cases}
    0, \ & \text{if} \ n \equiv 1,2 \mod 3, \\
    1, \ & \text{if} \ n \equiv 3 \mod 6, \\
    \ord_2(n) + 2, \ & \text{if} \ n \equiv 0 \mod 6, 
\end{cases}
\]
which was proven in \cite[Lemma~2]{Lengyel:1995}.

Let us now suppose that $p = 5$. Since $-J_\alpha \equiv (t-1)^2 \ (\text{mod} \ 5)$, we see that the multiplicative order of $\beta_{1}$ and $\beta_{2}$ in $\overline{\mathbb{F}}_{5}^{\times}$ is $1$, which is not the rank of apparition of the prime $5$. 
Moreover, one has that $\ord_5(\beta_j - \tau_5(\pi_5(\beta_j))) = \frac{1}{2}$. Indeed, one sees that $\tau_5(\pi_5(\beta_j)) = \tau_5(1) = 1$. Hence 
\[
\ord_5(\beta_j - \tau_5(\pi_5(\beta_j))) = \mathrm{ord}_5(\sqrt{5}) = \frac{1}{2}\] for every $j \in \{1,2\}$, as we wanted to show. Finally, let us observe that $s_5(\beta_j) = 0$ for every $j \in \{1,2\}$. Combining this with the fact that $c_5(X,\alpha) = -1$, we see that
\[
    \ord_5(F_n) = \ord_5(n) + \frac{1}{4} + \frac{1}{4} - \frac{1}{2} = \ord_5(n)
\]
for every $n \in \mathbb{N}$, as was proven in \cite[Lemma~1]{Lengyel:1995}. 
This shows that \cref{p_adic_val} can be seen as a generalization of Lengyel's theorem to sequences that arise as the number of spanning trees in a $\mathbb{Z}$-cover of finite graphs.

\section*{Acknowledgments}

We thank Sören Kleine, Matilde Lalín, Katharina Müller and Antonio Lei for some comments on an earlier version of the present paper.
Moreover, the first named author thanks Francesco Campagna, Roberto Gualdi, Pieter Moree and Fabien Pazuki for useful discussions and remarks.  We would also like to thank the anonymous referee for thoroughly going through the paper and making many valuable comments, remarks, and suggestions.

\section*{Funding}
Both authors are grateful to the Max Planck Institute for Mathematics in Bonn for providing excellent working conditions, great hospitality and financial support.
Moreover, the first named author thanks the research projects ``Motivic homotopy, quadratic invariants and diagonal classes'' (ANR-21-CE40-0015) and IRN GANDA for their financial support.

\vspace{\baselineskip}
\noindent
\framebox[\textwidth]{
\begin{tabular*}{0.96\textwidth}{@{\extracolsep{\fill} }cp{0.84\textwidth}}
 % The EU emblem
\raisebox{-0.7\height}{%
    \begin{tikzpicture}[y=0.80pt, x=0.8pt, yscale=-1, inner sep=0pt, outer sep=0pt, 
    scale=0.12]
    \definecolor{c003399}{RGB}{0,51,153}
    \definecolor{cffcc00}{RGB}{255,204,0}
    \begin{scope}[shift={(0,-872.36218)}]
      \path[shift={(0,872.36218)},fill=c003399,nonzero rule] (0.0000,0.0000) rectangle (270.0000,180.0000);
      \foreach \myshift in 
           {(0,812.36218), (0,932.36218), 
    		(60.0,872.36218), (-60.0,872.36218), 
    		(30.0,820.36218), (-30.0,820.36218),
    		(30.0,924.36218), (-30.0,924.36218),
    		(-52.0,842.36218), (52.0,842.36218), 
    		(52.0,902.36218), (-52.0,902.36218)}
        \path[shift=\myshift,fill=cffcc00,nonzero rule] (135.0000,80.0000) -- (137.2453,86.9096) -- (144.5106,86.9098) -- (138.6330,91.1804) -- (140.8778,98.0902) -- (135.0000,93.8200) -- (129.1222,98.0902) -- (131.3670,91.1804) -- (125.4894,86.9098) -- (132.7547,86.9096) -- cycle;
    \end{scope}
    %\draw[very thin,dashed] (current bounding box.south west) rectangle               (current bounding box.north east);
    \end{tikzpicture}%
}
&
Riccardo Pengo received funding from the European Research Council (ERC) under the European Union’s Horizon 2020 research and innovation programme (grant agreement number 945714).
\end{tabular*}
}

%\bibliographystyle{alpha}
%\bibliography{references}
\AtNextBibliography{\footnotesize}
\emergencystretch=1em
%\appto{\bibsetup}{\sloppy}
\printbibliography
\end{document}